\def\xx#1{{\color{red}#1}}
\def\xx#1{{#1}}
\title{The extremal function for bipartite linklessly embeddable graphs}
\begin{document}
\newtheorem{theorem}{Theorem}[section]
\newtheorem{conjecture}[theorem]{Conjecture}
\newtheorem{lemma}[theorem]{Lemma}
\newtheorem{claim}{Claim}[theorem]
\newtheorem{problem}[theorem]{Problem}

\theoremstyle{definition}
\newtheorem{definition}[theorem]{Definition}

\centerline{\Large \bf THE EXTREMAL FUNCTION FOR BIPARTITE}
\smallskip
\centerline{{\Large\bf LINKLESSLY EMBEDDABLE GRAPHS}%
\footnote{Partially supported by NSF under Grants No.~DMS-1202640 and~DMS-1700157. 2 July 2017, revised 17 December  2018.
}}

\bigskip
\bigskip

\centerline{{\bf Rose McCarty}%
}
\smallskip
\centerline{and}
\smallskip
\centerline{{\bf Robin Thomas}}
\bigskip
\centerline{School of Mathematics}
\centerline{Georgia Institute of Technology}
\centerline{Atlanta, Georgia  30332-0160, USA}
\bigskip

\begin{abstract}
\noindent
An embedding of a graph in $3$-space is {\em linkless} if for every two disjoint cycles there exists an 
embedded ball that contains one of the cycles and is disjoint from the other.
We prove that every bipartite linklessly embeddable (simple) graph on $n\ge5$ vertices 
has at most $3n-10$ edges, unless it is isomorphic to the complete bipartite graph $K_{3,n-3}$.
\end{abstract}

\section{Introduction}\label{intro}

All {\em graphs} in this paper are finite and simple. {\em Paths} and {\em cycles} have no ``repeated" vertices.
An embedding of a graph in $3$-space is {\em linkless} if for every two disjoint cycles there exists an 
embedded ball that contains one of the cycles and is disjoint from the other.
We prove the following theorem. 

\begin{theorem}
\label{thm:main}
Every bipartite linklessly embeddable  graph on $n\ge5$ vertices 
has at most $3n-10$ edges, unless it is isomorphic to the complete bipartite graph $K_{3,n-3}$.
\end{theorem}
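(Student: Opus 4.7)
The plan is to proceed by induction on $n$. The base cases $n \in \{5,6,7\}$ are verified directly: on each of these vertex counts, the only bipartite graph with more than $3n-10$ edges is already $K_{3,n-3}$, which is linklessly embeddable. For the inductive step, suppose toward contradiction that $G$ is a minimum counterexample on $n \ge 8$ vertices, so $G$ is bipartite and linklessly embeddable with $|E(G)| \ge 3n-9$ and $G \not\cong K_{3,n-3}$.

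Since $K_{4,4}$ contains $K_{3,3,1}$ as a minor and the latter is forbidden, $G$ has no $K_{4,4}$ subgraph, and from this (together with a short averaging argument on each side of the bipartition) one concludes that some vertex $v$ satisfies $\deg(v) \le 5$. When $\deg(v) \le 2$, the graph $G-v$ is bipartite and linklessly embeddable with at least $3n-11 > 3(n-1)-10$ edges, so by induction $G - v \cong K_{3,n-4}$, giving $|E(G)| \le 3(n-4) + 2 = 3n - 10$ and contradicting $|E(G)| \ge 3n - 9$. When $\deg(v) = 3$, the same reasoning forces $G - v \cong K_{3,n-4}$; a case analysis on which side of the bipartition $v$ joins then yields either $G = K_{3,n-3}$ (contradicting our assumption) or a $4$-versus-$(n-4)$ bipartite graph that contains a Petersen-family minor.

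Degrees $4$ and $5$ are the principal obstacle. A single vertex deletion loses too many edges for the inductive bound to close, while the classical $Y$-$\Delta$ and $\Delta$-$Y$ reductions used for general linklessly embeddable graphs — as well as any single edge contraction — destroy bipartiteness. The plan is therefore to develop a bipartiteness-preserving reduction: locate a small cut $S$ of $G$ near $v$ across which one side can be replaced by a carefully chosen \emph{gadget}, yielding a strictly smaller bipartite linklessly embeddable graph $G'$ with excess $|E(G')| - (3|V(G')| - 10)$ no smaller than that of $G$, so that induction closes. The existence of such a cut and the correctness of the gadget must be extracted from the precise configuration of forbidden Petersen-family minors ($K_6$, $K_{3,3,1}$, the Petersen graph, and their $Y$-$\Delta$ relatives) around $v$. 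Making this reduction work uniformly — and in particular ensuring the reduced graph never coincidentally equals $K_{3,|V(G')|-3}$ — is where the main technical effort is expected to lie.
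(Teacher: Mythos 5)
Your proposal matches the paper's high-level strategy — pass from linklessness to Petersen-family minor exclusion via Theorem~\ref{thm:linkequiv}, take a minimum counterexample, dispatch small~$n$ and low-degree vertices, and isolate degrees $4$ and $5$ as the crux — and the treatments of degree at most $2$ and degree $3$ are essentially the ones in the paper. However, there are two genuine problems.

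First, the argument you give for the existence of a vertex of degree at most $5$ does not work. Excluding a $K_{4,4}$ subgraph imposes no linear bound on the minimum degree: by Kővári--Sós--Turán type bounds, a bipartite graph can have all degrees far larger than $6$ and still contain no $K_{4,4}$ subgraph. The correct and much simpler argument is edge-minimality of the counterexample: if $|E(G)|\ge 3n-7$, deleting an edge leaves a bipartite graph with no forbidden minor, at least $3n-8$ edges, and not isomorphic to $K_{3,n-3}$ (since $K_{3,n-3}$ has exactly $3n-9$ edges), contradicting minimality; so $|E(G)|\le 3n-8$ and the handshake lemma gives $\delta(G)\le 5$. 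This is Lemma~\ref{minDegree} in the paper.

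Second, and more seriously, the degree-$4$ and degree-$5$ cases are the entire content of the theorem, and your proposal only gestures at a strategy (``locate a small cut\,\ldots\ replace one side by a gadget'') without carrying it out. The paper's actual mechanism is different in flavor: rather than replacing one side of a cut by a gadget to produce a single smaller graph with no smaller excess, it works with \emph{super-separations} $(G_0,G_1)$ where both pieces are bipartite proper minors of $G$ on at least five vertices, and applies the inductive bound to both pieces simultaneously (Lemma~\ref{separation}). This, in turn, requires a string of structural lemmas that your proposal does not anticipate: a restriction on which separations of small order can exist (Lemmas~\ref{connectivity} and~\ref{connectivity2}), the fact that $G$ contains no $K_{3,3}$ subgraph (Lemma~\ref{noK33}, proved by contracting a component onto the $K_{3,3}$ to obtain $K_{1,3,3}$), and a common-neighbor lemma (Lemma~\ref{inCommon}) saying that any two neighbors of a degree-$d$ vertex with $d\in\{4,5\}$ share at least $7-d$ common neighbors besides~$v$. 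These lemmas feed directly into extended casework that locates $K_6$, $K_6^{\Delta Y}$, or $K_{4,4}^-$ minors. Your worry about the reduced graph coincidentally becoming $K_{3,t}$ is a real one; the paper handles it by ensuring both pieces of every super-separation have at least four vertices on each side of the bipartition. In short, the easy half of your plan is sound (modulo the $\delta\le5$ slip), but the hard half is missing, and it is unlikely that a single-gadget replacement scheme would be simpler than the paper's two-sided super-separation accounting.
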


\noindent
The question of whether linklessly embeddable bipartite graphs on $n\ge5$ vertices have at most $3n-9$ edges 
is stated as~\cite[Problem~2.3]{TaiCdV}, and Theorem~\ref{thm:main} is implied by~\cite[Conjecture~4.5]{KalNevNov}.
 
The following are equivalent conditions for a graph to be linklessly embeddable.
A graph $H$ is obtained from a graph $G$ by a {\em $Y\Delta$ transformation} if $H$ is obtained from $G$ by deleting a vertex~$v$ 
of degree 3 and joining every pair of non-adjacent neighbors of $v$ by an edge.
Conversely, $G$ is obtained from $H$ by means of   a {\em $\Delta Y$ transformation} if $G$ is obtained from $H$ by deleting the edges 
of a cycle of length 3 (``a triangle") and adding a vertex of degree 3 joined to the vertices of the triangle.
The {\em Petersen family} is the set of seven graphs obtained from the complete graph $K_6$ by means of  $Y\Delta$ and 
 $\Delta Y$ transformations. The Petersen graph is a member of the family, and hence the name. The Petersen family is depicted 
in Figure~\ref{fig:petfam}.
A graph is a {\em minor} of another if the first can be obtained from a subgraph of the second by contracting edges. 
An {\em $H$ minor} is a minor isomorphic to $H$.
We denote by $\mu(G)$ the graph invariant introduced by Colin de Verdi\`ere~\cite{ColMu}. We omit its definition,
because we do not need it.

\begin{figure}[htb]
 \centering
\includegraphics[scale = .75]{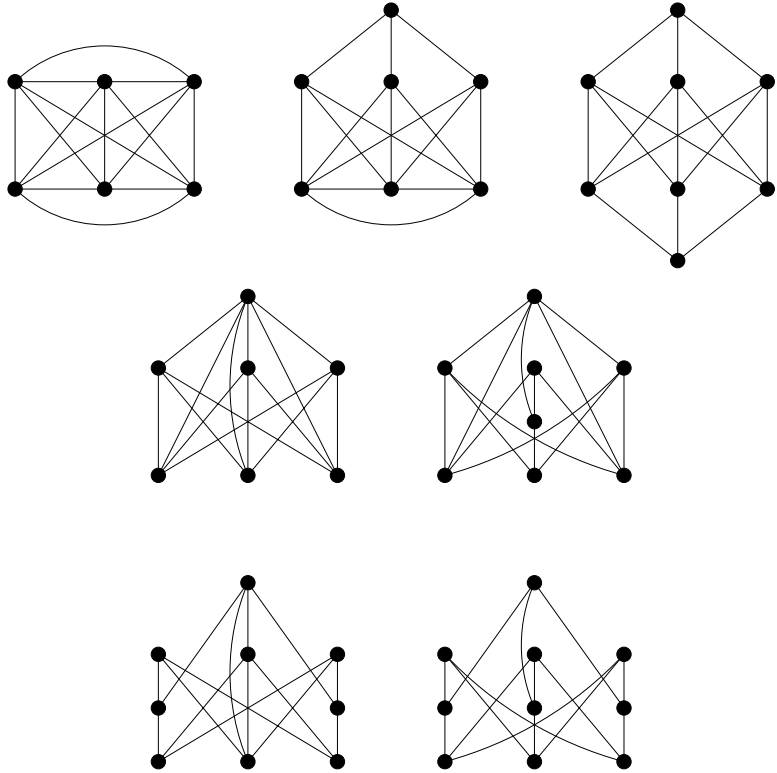}
 \caption{The Petersen family.}
\label{fig:petfam}
\end{figure}

\begin{theorem}
\label{thm:linkequiv}
For every graph $G$ the following conditions are equivalent:
\begin{itemize}
\item[\rm(i)]$G$ has an embedding in $3$-space such that every two disjoint cycles have even linking number.
\item[\rm(ii)]$G$ is linklessly embeddable.
\item[\rm(iii)]$G$ has an embedding in $3$-space such that every cycle bounds an open disk disjoint from the embedding of $G$.
\item[\rm(iv)]$G$ has no minor isomorphic to a member of the Petersen family.
\item[\rm(v)]$\mu(G)\le4$.
\end{itemize}
\end{theorem}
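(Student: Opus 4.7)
The plan is to establish the equivalences via a cyclic chain (iii) $\Rightarrow$ (ii) $\Rightarrow$ (i) $\Rightarrow$ (iv) $\Rightarrow$ (iii), together with (iv) $\Leftrightarrow$ (v) handled separately. The statement is really a compendium of four landmark theorems, so the right strategy is to explain which known result gives each implication rather than to rederive them from scratch.

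The implications (iii) $\Rightarrow$ (ii) and (ii) $\Rightarrow$ (i) are routine topology. If every cycle $C$ bounds an embedded open disk disjoint from $G$, then thickening the disk slightly produces an embedded ball containing $C$ and disjoint from every cycle of $G$ disjoint from $C$, which gives (ii); and two cycles placed inside disjoint embedded balls have linking number zero, hence even, which gives (i).

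For (i) $\Rightarrow$ (iv) I would invoke the Conway--Gordon--Sachs theorem that every spatial embedding of $K_6$ contains two disjoint cycles with odd linking number, together with two preservation facts. First, the property \textquotedblleft every embedding has two disjoint cycles with odd linking number\textquotedblright\ is preserved under taking minors, because an embedding of $G$ yields an embedding of any minor $H$ by restricting to the relevant subgraph and contracting short arcs realizing the contracted edges, and parities of linking numbers are preserved. Second, $\Delta Y$ and $Y\Delta$ transformations preserve the property, via a short local surgery at the degree-three vertex or the triangle. Propagating the Conway--Gordon obstruction along these operations yields the obstruction for every member of the Petersen family, so any graph with such a minor violates (i).

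The deep direction is (iv) $\Rightarrow$ (iii), which is the theorem of Robertson, Seymour, and Thomas that every graph with no Petersen-family minor admits a flat (panelled) embedding. This is the main obstacle and I would cite it wholesale; the proof rests on the structure theorem for graphs with no $K_6$-minor, reducing to the task of assembling flat embeddings of $4$-connected pieces along clique-sums while controlling the topology at each gluing clique. Finally, (iv) $\Leftrightarrow$ (v) is the theorem of Lov\'asz and Schrijver, which combines minor-monotonicity of $\mu$, a direct verification that $\mu(G) \ge 5$ for each graph in the Petersen family, and an upper bound argument showing $\mu(G) \le 4$ whenever $G$ has no such minor. Put together, these four sources furnish the entire equivalence; in the present paper Theorem~\ref{thm:linkequiv} is only used as a black box, so no new argument is needed.
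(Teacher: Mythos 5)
Your proposal is correct and matches the paper's approach exactly: the paper also proves this by the cycle (iii) $\Rightarrow$ (ii) $\Rightarrow$ (i) $\Rightarrow$ (iv) $\Rightarrow$ (iii), citing Sachs for (i) $\Rightarrow$ (iv) and Robertson--Seymour--Thomas for (iv) $\Rightarrow$ (iii), and closing the loop with $\mu$ via Lov\'asz--Schrijver. One small attribution point: the paper credits the direction (v) $\Rightarrow$ (iv) to Bacher and Colin de Verdi\`ere (minor-monotonicity of $\mu$ plus $\mu \ge 5$ on the Petersen family), reserving Lov\'asz--Schrijver for (iii) $\Rightarrow$ (v), so folding the entire equivalence (iv) $\Leftrightarrow$ (v) under Lov\'asz--Schrijver alone slightly misstates the provenance, though not the mathematics.
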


\noindent
Here (iii)$\Rightarrow$(ii) and (ii)$\Rightarrow$(i) are trivial, (i)$\Rightarrow$(iv) was shown by Sachs~\cite{Sachs1,Sachs2},
(iv)$\Rightarrow$(iii) was shown by Robertson, Seymour and the second author~\cite{RobSeyThoSachs}, 
(v)$\Rightarrow$(iv) was shown by Bacher and Colin de Verdi\`ere~\cite{BacCol}, and (iii)$\Rightarrow$(v)  was
shown by Lov\'asz and Schrijver~\cite{LovSchMu}.

Let us now put Theorem~\ref{thm:main} in perspective. For general graphs excluding only the Petersen graph as a minor, Hendrey and Wood \cite{HendreyWood} showed that the correct bound on the number of edges is $5n-9$, which is tight. For linklessly embeddable graphs that are not necessarily bipartite the correct bound 
on the number of edges is $4n-10$, which is tight for any graph obtained from a planar triangulation $G$ on at least three vertices by adding a new vertex  with neighborhood $V(G)$. This bound follows from the following more general result of Mader~\cite{MadHom}.

\begin{theorem}\label{thm:mader}
For every integer $p=2,3,\ldots,7$, every graph on $n\ge p-1$ vertices with no minor isomorphic to $K_p$ has
 at most $(p-2)n-{p-1\choose2}$ edges.
\end{theorem}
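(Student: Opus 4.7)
The plan is induction on $n$. The base case $n=p-1$ is trivial: any such graph has at most $\binom{p-1}{2}=(p-2)(p-1)-\binom{p-1}{2}$ edges, with equality exactly when the graph is $K_{p-1}$, which of course has no $K_p$ minor.

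For the inductive step, consider a $K_p$-minor-free graph $G$ on $n\ge p$ vertices. I would first dispose of the easy situation of a low-degree vertex: if some $v\in V(G)$ has $\deg(v)\le p-2$, then applying the inductive hypothesis to $G-v$ gives
\[
|E(G)|\;\le\;|E(G-v)|+(p-2)\;\le\;(p-2)(n-1)-\tbinom{p-1}{2}+(p-2)\;=\;(p-2)n-\tbinom{p-1}{2},
\]
as required. So the remaining case is that $G$ has minimum degree at least $p-1$. The plan here is to locate an edge $uv\in E(G)$ with $|N_G(u)\cap N_G(v)|\le p-3$, contract it, and then apply induction: the contracted graph $G/uv$ has $n-1$ vertices, at least $|E(G)|-(p-2)$ edges, and is still $K_p$-minor-free, from which the desired bound on $|E(G)|$ follows.

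The main obstacle is establishing existence of such a contractible edge when the minimum degree of $G$ is at least $p-1$. My strategy is to argue the contrapositive: assuming that every edge $uv$ of $G$ satisfies $|N_G(u)\cap N_G(v)|\ge p-2$, I would produce a $K_p$-minor of $G$ directly, which would contradict the hypothesis. This is the step where the bound $p\le 7$ enters essentially. Fixing any vertex $v$, the subgraph $G[N_G(v)]$ is $K_{p-1}$-minor-free (since prefixing a branch set $\{v\}$ to any $K_{p-1}$-minor of it would yield a $K_p$-minor of $G$), has at least $p-1$ vertices, and has minimum degree at least $p-2$ because every neighbor $u$ of $v$ is joined inside $G[N_G(v)]$ to all of $N_G(u)\cap N_G(v)$.

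The remaining task, and the delicate one, is to convert this local density into an actual $K_{p-1}$-minor of $G[N_G(v)]$. For $p-1\le 3$ the claim is immediate; for $p-1\in\{4,5\}$ one can invoke the Kuratowski/Wagner characterizations of graphs with no $K_4$-- or $K_5$-minor to rule out the high-density neighborhood; and for $p-1\in\{5,6\}$ a finer structural analysis of $K_{p-1}$-minor-free graphs of minimum degree $p-2$ (leveraging that such graphs are by induction sparse globally while being forced to be dense locally) produces the contradiction. In this way the secondary case analysis in $p$ completes the proof, with $p=7$ being the hardest case and the principal technical hurdle of the argument.
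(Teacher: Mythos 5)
The paper does not prove this theorem; it simply cites Mader~\cite{MadHom} and uses it as a known fact, so there is no ``paper's own proof'' to compare against. Evaluated on its own terms, your outline is correct in its skeleton but contains a genuine gap in the only nontrivial step.

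The induction framework, the disposal of a vertex of degree $\le p-2$, and the contraction of an edge $uv$ with $|N(u)\cap N(v)|\le p-3$ are all sound. The entire difficulty is to show such a contractible edge exists when $\delta(G)\ge p-1$, and this is where the argument breaks. You reduce to: $H:=G[N(v)]$ is $K_{p-1}$-minor-free with $\delta(H)\ge p-2$, and you want a contradiction (or a $K_{p-1}$-minor inside $H$). For $p\le 5$ this works: $K_3$-minor-free graphs are forests, and $K_4$-minor-free graphs have treewidth~$2$, so both have a vertex of degree $\le 2<p-2$. But for $p=6$ the octahedron $K_{2,2,2}$ is $K_5$-minor-free and $4$-regular, and for $p=7$ the icosahedron is $K_6$-minor-free and $5$-regular; in both cases $\delta(H)=p-2$ is perfectly possible, and neither Wagner's theorem nor ``global sparsity vs.\ local density'' forces a contradiction. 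Indeed, taking $G[N(v)]$ to be the octahedron and $v$ adjacent to all six vertices gives $G[N[v]]\cong K_{1,2,2,2}$, which has $7$ vertices, $18=4\cdot7-10$ edges, and no $K_6$-minor — it achieves the Mader bound and is edge-maximal, so nothing local rules out this neighborhood picture. Your mention of ``a finer structural analysis'' for $p-1\in\{5,6\}$ names the right problem but does not give an argument; this finer analysis is the theorem for $p=6,7$, and Mader's actual proof there requires substantially more work (careful analysis of contraction-critical graphs), not just the observation that $G[N(v)]$ is locally dense.
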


Theorem~\ref{thm:mader} is such a nice result that it raises the question of whether it
can be generalized to all values of $p$. But there is some depressing news:
 for large $p$ a graph must have at least
$\Omega(p\sqrt{\log p}n)$ edges in order to guarantee a $K_p$ minor,
because, as noted by several people (Kostochka~\cite{Kosminor,Koshad}, and
Fernandez de la Vega \cite{FerdlV} based on Bollob\'as, Catlin and Erd\"os \cite{BolCatErd}),
a random graph with no $K_p$ minor may have average degree of order
$p\sqrt{\log p}$. Kostochka~\cite{Kosminor,Koshad} and Thomason \cite{Thocontr}
 proved that this is indeed the
correct order of magnitude, and in a remarkable result, Thomason~\cite{Thominors}  was
able to determine the constant of proportionality. For small $s$, K\"{u}hn and Osthus \cite{KuhnOsthus} and Kostochka and Prince \cite{KosPrince} have shown that average degree of order $p$ suffices to guarantee a $K_{s,p}$ minor.

It may seem that an effort to generalize Theorem~\ref{thm:mader} to clique minors will be in vain,
but there are still the following possibilities. The random graph examples
provide only finitely many counterexamples for any given value of $p$. Of course,
more counterexamples can be obtained by taking disjoint unions or even
gluing counterexamples along small cutsets, but we know of no construction
of highly connected infinite families of counterexamples.
More specifically, Seymour and the second author conjecture the following.

\begin{conjecture}\label{conj:pdsrt} For every integer $p\ge2$ there exists a constant $N=N(p)$ such that
every $(p-2)$-connected graph on $n\ge N$ vertices with no minor isomorphic to $K_p$ has at most 
$(p-2)n-{p-1\choose2}$ edges.
\end{conjecture}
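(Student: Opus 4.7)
My plan is to proceed by induction on $p$. For $p \in \{2,3,\ldots,7\}$, Theorem~\ref{thm:mader} gives the bound with $N(p)=p-1$ and without any connectivity hypothesis, so those cases are immediate. For the inductive step, I would take a counterexample $G$ minimizing $n$: a $(p-2)$-connected graph on $n\ge N(p)$ vertices with no $K_p$ minor and with more than $(p-2)n-\binom{p-1}{2}$ edges.

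First, using the $(p-2)$-connectivity together with edge-counting, I would establish that $G$ has minimum degree at least $p-1$, and more usefully that many vertices have degree not much larger than $2(p-2)$, so that cheap local reductions are available. Deleting a vertex of degree exactly $p-1$ (if one exists and its neighborhood is not a clique) or contracting a short path toward such a vertex should reduce to a smaller instance whose edge count still violates the inductive hypothesis, unless the neighborhood is already $(p-1)$-clique-like, which would immediately give a $K_p$ minor.

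The main step would be to pick an edge $uv$ with $|N(u)\cap N(v)|$ as large as possible (a quantity lower-bounded by the edge density via a standard averaging argument) and contract it to obtain $G'$. Either $G'$ contains a $K_p$ minor, which pulls back to a $K_p$ minor of $G$, or $G'$ has a vertex cut of size less than $p-2$. In the latter case I would split $G$ along the corresponding cut, complete each side by adding edges among the cut vertices so that each part becomes $(p-2)$-connected, apply the inductive hypothesis to each part (which is strictly smaller), and then reassemble the edge counts, using that the completions add at most $\binom{p-2}{2}$ edges per side.

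The main obstacle is precisely the step of extracting a $K_p$ minor from a $(p-2)$-connected graph whose density is only a little above $p-2$. The random constructions of Bollob\'as--Catlin--Erd\H os, Fernandez de la Vega and Kostochka show that density alone, even of order $p\sqrt{\log p}$, is insufficient; so the connectivity must be used in a genuinely structural way. Here I would invoke the Robertson--Seymour structure theorem for $K_p$-minor-free graphs and hope that $(p-2)$-connectivity, together with $n \ge N(p)$, forces $G$ to lie in a single ``almost-embeddable'' piece with a bounded number of apex vertices and bounded-size vortices; one then tries to route disjoint paths through the apices to assemble the missing $K_p$ minor. Calibrating this routing so that it beats the $(p-2)n-\binom{p-1}{2}$ threshold for all sufficiently large $n$ is the hard part, and is why the conjecture is known only for $p\le 8$.
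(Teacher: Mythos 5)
The statement you were given is Conjecture~\ref{conj:pdsrt}, an \emph{open} conjecture attributed to Seymour and the second author. The paper contains no proof of it, so there is no argument here to compare your sketch against. Your proposal is a research plan, not a proof, and to your credit you flag the genuine obstacle yourself in the final paragraph.

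Even as a plan, several steps are not sound as written. Deleting a vertex $v$ of degree $p-1$ can destroy $(p-2)$-connectivity, so the inductive hypothesis does not apply to $G-v$ without further work; the same problem afflicts ``contracting a short path toward $v$.'' More seriously, the case split after contracting $uv$ confuses which graph has the small cut. The graph $G$ is $(p-2)$-connected by hypothesis, so $G$ has no vertex cut of size below $p-2$; a cut of size $k<p-2$ in the contraction $G'$ lifts to a cut of size $k$ or $k+1$ in $G$ containing at least one of $u,v$, and ``splitting $G$ along the corresponding cut'' into two strictly smaller $(p-2)$-connected pieces is not something you can do. Completing each side of such a cut to a clique is also risky: the added edges can create a $K_p$ minor in a piece, and even if they do not, reassembling the two edge counts across a cut of size about $p-2$ costs a term of order $\binom{p-2}{2}$, which is exactly the slack you are trying to win. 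Finally, ``invoke the Robertson--Seymour structure theorem and route paths through the apices'' is the step nobody has been able to calibrate; the structure theorem's constants (apex set size, vortex depth, genus) are far larger than $p$, so there is no obvious way to convert an almost-embeddable structure plus $(p-2)$-connectivity into a $K_p$ minor at density just above $(p-2)n$. The random-graph examples you cite are small (on $O(p\sqrt{\log p})$ vertices), which is why the conjecture restricts to $n\ge N(p)$; the content of the conjecture is precisely that those counterexamples do not propagate to large connected graphs, and that is what remains unproved. For the record, the conjecture is known for $p\le 9$, via Mader's theorem for $p\le 7$ and the theorems of J{\o}rgensen ($p=8$) and Song--Thomas ($p=9$), which characterize the extremal graphs as cockades that fail to be $(p-2)$-connected.
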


In a slightly different direction the first author conjectures~\cite{McC} the following.

\begin{conjecture}\label{conj:rm}
For every integer $p\ge3$, every graph $G$ on $n\ge p-1$ vertices with $\mu(G)\le p-2$ has
 at most $(p-2)n-{p-1\choose2}$ edges.
\end{conjecture}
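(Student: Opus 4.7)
The plan rests on two facts about the Colin de Verdi\`ere invariant. First, $\mu(K_p) = p-1$ by an explicit construction of a corank-$(p-1)$ generalized Laplacian of $K_p$ satisfying the Strong Arnold Property. Second, $\mu$ is minor-monotone: if $H$ is a minor of $G$, then $\mu(H) \le \mu(G)$. Combining these, any graph $G$ with $\mu(G) \le p-2$ is $K_p$-minor-free (call this implication $(\star)$). For $p \le 7$, Theorem~\ref{thm:mader} then immediately yields the desired bound $(p-2)n - \binom{p-1}{2}$. This handles $p = 3, \ldots, 7$ uniformly, and in particular recovers the classical edge bounds for linear forests, outerplanar graphs, planar graphs, and linklessly embeddable graphs (using Theorem~\ref{thm:linkequiv} for the last). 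The case $n = p - 1$ is tight: $K_{p-1}$ has $\binom{p-1}{2} = (p-2)(p-1) - \binom{p-1}{2}$ edges and $\mu(K_{p-1}) = p - 2$.

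For $p \ge 8$, however, $(\star)$ alone is insufficient. By the random graph constructions of Bollob\'as, Catlin and Erd\"os (refined by Kostochka and Thomason), $K_p$-minor-free graphs may have average degree of order $p\sqrt{\log p}$, far exceeding the $2(p-2)$ average degree implied by the conjecture. Thus the spectral strength of the hypothesis $\mu(G) \le p-2$ must be used in an essential way. My plan here would be strong induction on $n$, working with a minimum counterexample $G$. By the edge hypothesis some vertex $v$ has degree at most roughly $2(p-2)$, and I would attempt a local reduction at $v$---either deleting $v$ or contracting an incident edge---to produce a smaller counterexample. The technical engine would be control of how $\mu$ behaves under these operations, in particular the fact that $\mu(G-v) \ge \mu(G) - 1$.

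The main obstacle is that these local operations need not decrease $\mu$ by a predictable amount, so the induction does not close cleanly. Moreover, the conjectured extremal graphs (presumably apex-like constructions over $K_{p-1}$, by analogy with the $p = 6$ case in Section~\ref{intro}) would need to be characterized in order to show that removing any vertex of $G$ preserves the tight edge count only in degenerate configurations. Absent such a structural theorem for graphs with $\mu = p-2$, I see no clean induction. Genuine progress for $p \ge 8$ likely requires either a structural description of the graphs attaining $\mu(G) = p-2$ with close to the extremal number of edges, or a direct algebraic inequality bounding $|E(G)|$ in terms of the corank of a generalized Laplacian with the Strong Arnold Property; either would be new.
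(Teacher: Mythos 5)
This statement is a \emph{conjecture} (Conjecture~\ref{conj:rm}), not a theorem; the paper offers no proof of it. The authors only note that it appears as an open problem in Tait's survey and is implied by a conjecture of Nevo. There is therefore no ``paper's own proof'' to compare against, and you should not expect to find one.

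Your partial argument is correct as far as it goes. For $3\le p\le 7$: since $\mu$ is minor-monotone and $\mu(K_p)=p-1$, the hypothesis $\mu(G)\le p-2$ forces $G$ to be $K_p$-minor-free, and Theorem~\ref{thm:mader} then gives exactly the claimed bound $(p-2)n-\binom{p-1}{2}$. Your tightness check for $n=p-1$ via $K_{p-1}$ is also right, since $\mu(K_{p-1})=p-2$ and $\binom{p-1}{2}=(p-2)(p-1)-\binom{p-1}{2}$. For $p\ge8$ you correctly diagnose why the excluded-minor reduction alone cannot suffice: $K_p$-minor-free graphs can have average degree $\Theta(p\sqrt{\log p})$, so the bound must exploit the spectral condition $\mu(G)\le p-2$ in an essential way, and there is currently no structural or algebraic machinery that delivers it. Your closing admission that you see no way to finish is the accurate assessment; the conjecture remains open, and the paper does not claim otherwise.
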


\noindent
Whether Conjecture~\ref{conj:rm} holds is stated as~\cite[Problem~1]{TaiCdV}. 
 Conjecture~\ref{conj:rm} is implied by~\cite[Conjecture~1.5]{Nev}.

Let us  repeat that for not necessarily bipartite graphs the bound on the number of edges for linklessly 
embeddable graphs and graphs with no $K_6$ minors coincide. Not so for bipartite graphs. 
\xx{In an earlier version of this paper we conjectured the following.}

\begin{conjecture}\label{conj:kp}
For every integer $p=2,3,\ldots,8$, every bipartite  graph on $n\ge 2p-5$ vertices with no minor isomorphic to $K_p$ has
 at most $(p-2)n-(p-2)^2$ edges.
\end{conjecture}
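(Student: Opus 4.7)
The plan is induction on $n$ with $p \in \{2, 3, \ldots, 8\}$ fixed, following the template of Mader's proof of Theorem~\ref{thm:mader} but with the bipartite constraint tracked throughout. The base case $n = 2p-5$ is essentially trivial: one bipartition class then has size at most $p-3$, forcing $|E(G)| \le (p-3)(n-1) \le (p-2)n - (p-2)^2$ in the relevant range. For the inductive step, if $G$ has a vertex $v$ of degree at most $p-2$, then $G - v$ is bipartite and $K_p$-minor-free on $n-1 \ge 2p-5$ vertices, so applying induction and adding back $\deg(v) \le p-2$ recovers the required bound. Henceforth we may assume $\delta(G) \ge p-1$, which in particular forces both bipartition classes to have at least $p-1$ vertices.

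Next I would reduce to the highly connected case. Suppose $G$ has a vertex cut $S$ of size at most $p-3$ separating $G$ into pieces on vertex sets $V_1 \cup S$ and $V_2 \cup S$. Each piece is bipartite and $K_p$-minor-free, so summing the two inductive bounds --- together with the observation that $G[S]$ contributes no doubly counted edges beyond those within a small bipartite graph on at most $p-3$ vertices --- should yield a total strictly below $(p-2)n - (p-2)^2$. Boundary cases, where one piece has fewer than $2p-5$ vertices and induction does not directly apply, can be dispatched by a direct edge count.

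The critical case is therefore $G$ being $(p-2)$-connected with $\delta(G) \ge p-1$ and $|E(G)| = (p-2)n - (p-2)^2 + 1$; here we must exhibit a $K_p$ minor. The natural strategy is to fix a vertex $u$ in the smaller bipartition class, invoke the $(p-3)$-connectivity of $G - u$, and use Menger-style arguments to partition a subset of $N(u)$ into the branch sets of a $K_{p-1}$ minor of $G - u$, so that $u$ upgrades this to a $K_p$ minor. An attractive alternative is to aim for the (a priori stronger) Conjecture~\ref{conj:rm} in this range, since $\mu(K_p) = p-1$ means that $\mu(G) \le p-2$ already implies $G$ is $K_p$-minor-free, and for $p=6$ Theorem~\ref{thm:linkequiv} combined with the main theorem of the present paper would handle the case essentially for free.

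The main obstacle will be precisely this minor-construction step for $p \ge 6$: the bipartite constraint eliminates the triangles and short odd cycles that normally serve as the ``glue'' between branch sets in $K_p$-minor constructions, so standard inductive arguments break down. For $p = 6$ the structural theory of the Petersen family provides leverage; for $p = 7$ and $p = 8$, where no analogous excluded-minor theorem is available, one likely needs a genuinely new input --- perhaps an averaging argument balancing contributions from the two bipartition classes, or a direct lower bound on $\mu(G)$ for dense bipartite graphs that brings Conjecture~\ref{conj:rm} to bear.
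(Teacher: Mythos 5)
This statement is a conjecture in the paper, not a theorem: the authors explicitly write that they had conjectured it in an earlier version, and that it was subsequently proved by Thomas and Yoo in a separate paper (as a consequence of their Theorem~\ref{thm:kp} on triangle-free graphs). So there is no proof in the present paper against which to compare your proposal; the only in-paper evidence offered is the trivial cases $p\le 4$ and the $p=5$ case via Wagner's theorem.

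Your proposal is an honest sketch with a self-acknowledged gap, and that gap is real and fatal as written: the ``critical case'' step of building a $K_p$ minor in a $(p-2)$-connected bipartite graph of minimum degree at least $p-1$ exceeding the edge bound is exactly where Mader-style arguments fail, and you do not supply the new idea needed. Two further points. First, your reduction across a cut of size at most $p-3$ is more delicate than you indicate: the standard move is to complete the cut $S$ to a clique before applying induction to the two sides, which destroys bipartiteness, so one must instead construct bipartite minors (as the present paper does with its ``super-separations''), and you do not say how. Second, your claim that Theorem~\ref{thm:linkequiv} together with the paper's main theorem ``handles $p=6$ essentially for free'' is incorrect: the main theorem bounds edges by $3n-10$ under the \emph{stronger} hypothesis of excluding every member of the Petersen family, whereas Conjecture~\ref{conj:kp} for $p=6$ asserts the weaker bound $4n-16$ under the \emph{weaker} hypothesis of excluding only $K_6$; neither bound nor hypothesis transfers, so the $K_6$ case requires its own argument just as $p=7,8$ do.
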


\noindent
The bound in  Conjecture~\ref{conj:kp} is tight, because of the graphs 
$K_{p-2,n-p+2}$. For $p\le4$ Conjecture~\ref{conj:kp} is easy, and for $p=5$ it follows from Wagner's
characterization of graphs with no $K_5$ minor~\cite{Wag37}. Conjecture~\ref{conj:kp} certainly does not 
hold for all $p$, because a graph with $\Omega(p\sqrt{\log p}n)$ edges and no  $K_p$ minor
has a bipartite subgraph  with $\Omega(p\sqrt{\log p}n)$ edges and no  $K_p$ minor. Since the time of submission, Thomas and Yoo \cite{ThomasYoo} proved a theorem implying Conjecture \ref{conj:kp}. They proved

\xx{%
\begin{theorem}\label{thm:kp}
For every integer $p=2,3,\ldots,9$, every triangle-free graph on $n\ge 2p-5$ vertices with no minor isomorphic to $K_p$ has
 at most $(p-2)n-(p-2)^2$ edges.
\end{theorem}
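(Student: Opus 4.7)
The plan is to proceed by induction on $p$. The cases $p\le 4$ are immediate: $K_2$-minor-free forces no edges, $K_3$-minor-free forces a forest, and $K_4$-minor-free means treewidth at most $2$, where the triangle-free hypothesis easily sharpens the standard bound $2n-3$ to the $2n-4$ attained by $K_{2,n-2}$. The case $p=5$ follows from Wagner's theorem by induction along a $3$-sum decomposition: every summand is planar or the Wagner graph $V_8$, each inherits triangle-freeness from $G$, and the bound $3n-9$ adds up across the summation once one uses that triangle-free planar graphs have at most $2n-4$ edges.

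For the inductive step with $p\in\{6,7,8,9\}$, I would take a counterexample $G$ minimizing $|V(G)|+|E(G)|$. Standard minimality arguments give $\delta(G)\ge p-2$, since deleting a lower-degree vertex yields a smaller counterexample. Minimality also rules out any decomposition along a vertex cut of size $s\le p-3$, because applying the inductive bound on each side and summing leaves a slack of $(p-2)s-(p-2)^2\le 0$ that absorbs any edges inside the cut. Hence $G$ is $(p-2)$-connected. I would then invoke the excluded-$K_p$-minor structure theorem for highly connected graphs appropriate to each $p$: Jørgensen's apex theorem for $p=6$ (proved for large $n$ by Kawarabayashi, Norine, Thomas, and Wollan), and the analogous apex-over-almost-planar descriptions of DeVos, Seymour, Song, and collaborators for $p=7,8,9$. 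Each theorem asserts, roughly, that $G$ consists of an almost-embedded core plus at most $p-5$ apex vertices, up to a small family of sporadic exceptional graphs.

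The remaining step is to combine this structure with the triangle-free hypothesis. The almost-embedded core is itself triangle-free and essentially planar, hence has at most $2m-4$ edges on its $m$ vertices. Each apex vertex has an independent neighborhood in the core (otherwise a core edge and the apex form a triangle), and any two adjacent apices have disjoint neighborhoods. A direct count, subtracting the overlap among the apices' neighborhoods, then bounds $e(G)$ by $(p-2)n-(p-2)^2$, with equality forcing the core to be edgeless and the apex vertices to form an independent set with common independent neighborhood---that is, $G=K_{p-2,n-p+2}$.

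The main obstacle I expect will be dispatching the sporadic exceptional families allowed by the structure theorems, especially for $p=8,9$, which include dense bipartite-like graphs. Each such exception must be treated by hand, either by locating a triangle in it (so it is not a valid counterexample) or by verifying the edge bound directly. As a fallback in the genuinely bipartite exceptional cases, I would attempt a contraction-based argument: pick an edge $uv$, note that $N(u)\cap N(v)=\emptyset$ by triangle-freeness, and contract. The result is one edge shorter and still $K_p$-minor-free; the only new triangles arise from edges between $N(u)\setminus\{v\}$ and $N(v)\setminus\{u\}$, so after deleting a controlled number of such edges one can invoke minimality on the contracted graph. Showing that an edge $uv$ always exists for which this deletion is small enough to preserve the inductive gap is likely the most delicate step.
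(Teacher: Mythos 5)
The theorem you are asked about is not proved in this paper; it is quoted from Thomas and Yoo~\cite{ThomasYoo}, so I can only compare your sketch against what such a proof would plausibly require.

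Your proposal hinges on structure theorems that do not exist. For $p\in\{7,8,9\}$ there is no known ``apex-over-almost-planar'' description of highly connected $K_p$-minor-free graphs; the relevant statements (for instance that every sufficiently connected graph with no $K_7$ minor has two vertices whose removal leaves a planar graph) are open conjectures, not theorems of DeVos, Seymour, Song, or anyone else. Even for $p=6$, the Jørgensen-type result of Kawarabayashi, Norine, Thomas and Wollan applies only to $6$-connected graphs on at least $N$ vertices for some enormous unspecified $N$, while the theorem you need must cover all $n\ge 2p-5$; you would need a separate, unstated argument for the finitely many small cases. So for two thirds of the range $p=6,\ldots,9$ the decisive step of your outline is simply unavailable, and for the remaining value it covers only asymptotically large instances.

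There is also a quantitative problem even where the structure is available. An ``almost-embedded core'' in graph-minor structure theorems is embedded in a surface of bounded genus, decorated with vortices, not in the plane. A triangle-free graph on a surface of Euler genus $g$ can have up to $2m-4+2g$ edges, and vortices contribute further edges not controlled by any planar bound, so the inequality ``at most $2m-4$'' you use for the core does not follow. The deficits stack up exactly where the final count is tightest, so the apex-plus-core bookkeeping as written does not close. Your fallback contraction idea is closer in spirit to what a direct proof must do (and to how this paper handles linkless graphs), but you correctly flag that bounding the number of new triangles is the hard part, and you do not supply that bound.

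The known proofs of the extremal function for $K_p$ minors in this range (Mader for $p\le7$, Jørgensen for $p=8$, Song--Thomas for $p=9$) avoid structure theorems entirely and argue directly from a minimum counterexample: establish $\delta(G)\ge p-2$, rule out small separations, and then use the neighborhood of a minimum-degree vertex together with connectivity to build a $K_p$ minor. The proof in the present paper of Theorem~\ref{thm:main2} follows this pattern, exploiting bipartiteness to make contractions cheap, and I would expect Thomas and Yoo to do the same with triangle-freeness playing the role that bipartiteness plays here (two neighbors of any vertex are nonadjacent, so contracting within a neighborhood is controlled). If you want to salvage your proposal, that is the route: drop the structure theorems and develop the contraction/minimality machinery you mention at the end, in particular a lemma bounding the number of triangles created when an edge is contracted in a minimum counterexample.
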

}


Motivated by Theorem~\ref{thm:main} and the equivalence of
(ii) and (v) in Theorem~\ref{thm:linkequiv} we also conjecture the following.

\begin{conjecture}\label{conj:mu}
For every integer $p\ge3$, every bipartite graph $G$ on $n\ge 2p-3$ vertices with $\mu(G)\le p$ has
 at most $(p-1)n-(p-1)^2$ edges.
\end{conjecture}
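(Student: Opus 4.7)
The natural strategy is induction on $n$ for fixed $p$. Let $(A,B)$ be the bipartition with $|A|\le|B|$. If $|A|\le p-1$, then the trivial bipartite edge bound gives $|E(G)|\le (p-1)(n-p+1) = (p-1)n-(p-1)^2$, so one may assume $|A|\ge p$, which forces $n\ge 2p$. In particular, the base cases $n\in\{2p-3,\,2p-2,\,2p-1\}$ are handled immediately by this size-of-bipartition argument.

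For $n\ge 2p$, suppose toward a contradiction that $|E(G)|>(p-1)n-(p-1)^2$. If some vertex $v$ has degree at most $p-1$, then $G-v$ is bipartite on $n-1\ge 2p-3$ vertices with $\mu(G-v)\le \mu(G)\le p$ (by minor-monotonicity of $\mu$) and with more than $(p-1)(n-1)-(p-1)^2$ edges, contradicting the inductive hypothesis. Hence one may assume $\delta(G)\ge p$ with both color classes of size at least $p$. The remaining task is to exhibit a minor $H$ of $G$ with $\mu(H)\ge p+1$.

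For $p=4$ one targets $H$ in the Petersen family, and Theorem~\ref{thm:main} together with the equivalence of (ii) and (v) in Theorem~\ref{thm:linkequiv} settles the case, with $K_{3,n-3}$ emerging as the unique bipartite extremum. For $p\ge 5$, however, no explicit list of minor-minimal graphs with $\mu\ge p+1$ is known, so there is no concrete forbidden minor to aim at. A reasonable intermediate objective is to prove the weaker $K_{p+2}$-minor-free analogue: since $\mu(K_{p+2})=p+1$, any graph with $\mu\le p$ is $K_{p+2}$-minor-free, so Conjecture~\ref{conj:kp} (or Theorem~\ref{thm:kp}, where available) at least yields an edge bound, albeit strictly weaker than what Conjecture~\ref{conj:mu} asserts.

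The principal obstacle is the absence of any structural characterization of graphs with $\mu\le p$ for $p\ge 5$; the excluded minors are known only for $p\le 4$. Overcoming this appears to require either a new ``bipartite-aware'' forbidden-minor theorem for $\mu\le p$, or a direct spectral/algebraic argument exploiting the matrix definition of the Colin de Verdi\`ere invariant without passing through minor characterizations. I regard settling the case $p=5$ as a natural first step: one would need to show that any bipartite graph of minimum degree at least $5$ with more than $4n-16$ edges contains a minor forcing $\mu\ge 6$, a statement that seems closest to existing techniques and whose resolution should suggest the right formulation of the general inductive step.
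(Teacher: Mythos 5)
This statement is Conjecture~\ref{conj:mu}, not a theorem: the paper offers no proof of it, only the remark that for $p=3$ it follows from the planarity characterization of $\mu\le 3$ together with the $2n-4$ bound for bipartite planar graphs, and that for $p=4$ it follows from Theorem~\ref{thm:main} and Theorem~\ref{thm:linkequiv}. Your writeup is therefore correctly not a proof, and your diagnosis of why it cannot currently be one matches the state of knowledge the paper alludes to. Your preliminary reductions are sound: when $|A|\le p-1$ the trivial bound $|E(G)|\le|A|(n-|A|)\le(p-1)(n-p+1)$ handles all $n$ down to $2p-3$ (since $|A|\le\lfloor n/2\rfloor$ and $k\mapsto k(n-k)$ is increasing on $[0,n/2]$), and the deletion of a vertex of degree $\le p-1$ together with minor-monotonicity of $\mu$ pushes the induction through once $n\ge 2p-2$. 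After that you are left needing to show that a bipartite graph with $\delta\ge p$ and more than $(p-1)n-(p-1)^2$ edges contains a minor $H$ with $\mu(H)\ge p+1$, and you are right that no forbidden-minor characterization of $\mu\le p$ exists for $p\ge 5$, so the argument stalls exactly where the paper leaves the problem open. One small caveat worth flagging: even for $p=4$ the reduction to a forbidden minor is not quite ``target a member of the Petersen family from a bipartite graph of minimum degree $4$''; the paper's actual proof of Theorem~\ref{thm:main} is a substantial minimum-counterexample argument spanning Sections~\ref{sec:proof1} and~\ref{sec:proof2}, so the $p=4$ case, while settled, is not a quick consequence of the degree reduction you describe. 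Your suggestion to attack $p=5$ next as a template for the general step is reasonable and is consistent in spirit with Conjecture~\ref{conj:rm} and the discussion around it.
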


\noindent
Let us remark that the bound in  Conjecture~\ref{conj:mu}, if true,  is tight,  because of the graphs 
$K_{p-1,n-p+1}$. For $p=3$ Conjecture~\ref{conj:mu} follows from the fact that graphs $G$ with 
$\mu(G)\le3$ are precisely planar graphs~\cite{ColMu}, and for $p=4$ it  follows from  Theorems~\ref{thm:main}
and~\ref{thm:linkequiv}.

For linklessly embeddable graphs, we conjecture that Theorem~\ref{thm:main} in fact holds for triangle-free graphs.

\begin{conjecture}
\label{conj:trfree}
Every triangle-free linklessly embeddable  graph on $n\ge5$ vertices 
has at most $3n-10$ edges, unless it is isomorphic to the complete bipartite graph $K_{3,n-3}$.
\end{conjecture}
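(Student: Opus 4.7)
\medskip

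\noindent\textbf{Proof proposal for Conjecture~\ref{conj:trfree}.} The plan is induction on $n$, reducing to the bipartite case handled by Theorem~\ref{thm:main}. Let $G$ be a minimum counterexample: triangle-free, linklessly embeddable, on $n\ge 5$ vertices, with $|E(G)| \ge 3n - 9$ and $G \not\cong K_{3,n-3}$. Since every bipartite graph is triangle-free, Theorem~\ref{thm:main} dispatches the bipartite case, so we may assume $G$ contains a shortest odd cycle $C$ of length $\ell \ge 5$.

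The first step is to establish $\delta(G) \ge 4$. If some $v \in V(G)$ has degree at most $3$, then $G - v$ is triangle-free and linklessly embeddable, and by the inductive hypothesis either $|E(G - v)| \le 3(n-1) - 10$, giving the immediate contradiction $|E(G)| \le 3n - 10$, or $G - v \cong K_{3,n-4}$. In the exceptional case, triangle-freeness forbids $v$ from having neighbors in both parts of $K_{3,n-4}$, since any two such neighbors would be adjacent in $K_{3,n-4}$, closing a triangle with $v$. Thus all neighbors of $v$ lie in one part: if in the part of size $3$, then $G \cong K_{3,n-3}$, a contradiction; if in the part of size $n-4$ with $n \ge 8$, then $G$ contains $K_{4,4}$ minus an edge as a subgraph, which has $15 > 3 \cdot 8 - 10$ edges and is not $K_{3,5}$, hence is not linklessly embeddable by Theorem~\ref{thm:main}, again a contradiction; the sub-cases with $n \le 7$ collapse to $K_{3,n-3}$ directly. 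A standard argument along separations of order at most $2$, in which the inductive bound applied to each side yields $|E(G)| \le 3n - 15$, similarly reduces to the case that $G$ is $3$-connected.

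The main work is to exploit the odd cycle $C = v_0 v_1 \cdots v_{\ell-1}$. By minimality of $\ell$, $C$ has no chord, and every vertex $u \notin V(C)$ satisfies $|N(u) \cap V(C)| \le 2$; moreover if $u$ has two neighbors $v_i, v_j \in V(C)$, then $i - j \equiv \pm 2 \pmod \ell$, else either triangle-freeness or minimality of $\ell$ is violated. These strong local constraints, together with $\delta(G) \ge 4$, $3$-connectivity, and the density $|E(G)| \ge 3n - 9$, should allow one to contract a carefully chosen near-matching on $C$ and route internally disjoint paths through $G - V(C)$ to produce a Petersen-family minor, most naturally $K_6$ or one of its $\Delta Y$-descendants obtained by identifying alternate vertices of $C$ with the contracted branch sets.

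The principal obstacle is the case $\ell = 5$, where $G$ structurally resembles a Petersen-graph-like configuration glued to a bipartite-extremal substructure; the extra density beyond $3n - 10$ must be shown to force a Petersen-family minor, which is subtle precisely because the Petersen graph itself is triangle-free and built on a $5$-cycle, so the local minor-extraction must avoid converging to Petersen-like yet linklessly embeddable configurations. A secondary technical difficulty is carrying the $K_{3,n-3}$ exception through the induction: one must verify uniqueness of the extremal graph at every reduction step, not merely the edge bound, since the target $K_{3,n-3}$ is itself triangle-free and attains $3n-9$ edges.
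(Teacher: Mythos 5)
This statement is Conjecture~\ref{conj:trfree}, which the paper explicitly leaves \emph{open}: the authors prove only the bipartite case (Theorem~\ref{thm:main}), and propose Conjecture~\ref{conj:trfull} (every linklessly embeddable graph on $n\ge 7$ vertices with $t$ triangles has at most $3n-9+t/3$ edges) as a possible route to the triangle-free version, with only the apex case of that conjecture known. So there is no proof in the paper to compare against; a correct argument here would itself be a new result.

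Your preliminary reductions are sound and essentially routine: the minimum-degree argument (including the nice use of Theorem~\ref{thm:main} itself to rule out a $K_{4,4}^-$ subgraph when $G-v\cong K_{3,n-4}$ and $N(v)$ lies in the large part), the reduction to $3$-connectivity, and the local structure of a shortest odd cycle $C$ (no chords, each vertex outside $C$ has at most two neighbours on $C$, and those at cyclic distance exactly two) are all correct. But the entire substance of the theorem is compressed into the sentence beginning ``These strong local constraints \ldots{} should allow one to contract a carefully chosen near-matching on $C$ \ldots{} to produce a Petersen-family minor,'' which is a restatement of the goal rather than an argument: you do not exhibit branch sets, do not show the required disjoint paths exist, and do not explain where the density hypothesis $|E(G)|\ge 3n-9$ actually enters the extraction --- yet that hypothesis is the only thing separating $G$ from a genuinely linkless graph. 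Your closing paragraph correctly flags $\ell=5$ as the hard case, but the stated worry about ``converging to Petersen-like yet linklessly embeddable configurations'' is misdirected: producing the Petersen graph itself would be a win, since it is a forbidden minor; the real danger is that no forbidden minor can be extracted at all, for example because the excess edges are distributed far from $C$. In short, the scaffolding for an inductive proof is assembled correctly, but the load-bearing step is absent, so this does not constitute a proof of the conjecture.
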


\noindent
A possible approach to Conjecture~\ref{conj:trfree} is to prove the following conjecture:

\begin{conjecture}
\label{conj:trfull}
Every linklessly embeddable  graph on $n\ge7$ vertices with $t$ triangles
has at most $3n-9+t/3$ edges.
\end{conjecture}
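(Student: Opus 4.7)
The plan is to prove Conjecture~\ref{conj:trfull} by strong induction on $n$ using the potential
\[
\phi(G) := 3n - m + \tfrac{t}{3},
\]
so that the claim becomes $\phi(G) \ge 9$. Two local reductions suggest themselves, both preserving linkless embeddability. First, \emph{vertex deletion}: writing $t_v$ for the number of triangles through $v$, a direct computation gives $\phi(G - v) = \phi(G) + (d(v) - 3 - t_v/3)$, so $\phi$ does not increase whenever $t_v \ge 3d(v) - 9$. Every vertex of degree at most three automatically satisfies this, so we may assume minimum degree at least $4$. Second, the $\Delta Y$ \emph{transformation} applied to a triangle $T$: writing $k_T := -3 + \sum_{e \in T} c_e$ with $c_e$ the number of triangles through edge $e$, one computes $\phi(G') = \phi(G) + (8 - k_T)/3$, so $\phi$ does not increase when $k_T \ge 8$. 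Iterating the second reduction eventually produces a triangle-free linkless graph $G^*$, at which point Conjecture~\ref{conj:trfree} gives $\phi(G^*) \ge 9$ and hence $\phi(G) \ge 9$.

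The crux of the proof is the residual case in which neither reduction gains: every vertex satisfies $t_v \le 3d(v) - 10$ and every triangle satisfies $k_T \le 7$. Summing the first over $V(G)$ gives $3t \le 6m - 10n$, and summing the second together with Cauchy--Schwarz applied to $\sum_e c_e = 3t$ yields $9t \le 10m$. Combined with the general bound $m \le 4n - 10$ that follows from Theorem~\ref{thm:mader} at $p = 6$, these confine $(n, m, t)$ to a narrow numerical window, but do not by themselves yield a contradiction. To close the residual case I would appeal to structural information about linklessly embeddable graphs of the kind used in the proof of Theorem~\ref{thm:main}---a decomposition into small-clique-sums of nearly-planar pieces with at most a bounded number of apex vertices---and argue that under the residual local constraints the graph must contain an apex-like vertex with many triangles through it, contradicting $t_v \le 3d(v) - 10$.

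I anticipate this structural endgame to be the main obstacle. In the residual regime $G$ resembles neither the triangle-free extremal $K_{3,n-3}$ nor the triangle-rich apex over a planar triangulation; instead it sits in an intermediate regime, and ruling out all such intermediate configurations without a clean catalog of linklessly embeddable structures appears to require extending the structural apparatus of the proof of Theorem~\ref{thm:main}. The base cases $n \in \{7, 8\}$ should be handled separately, most likely by direct enumeration of edge-maximal linklessly embeddable graphs on so few vertices, since the reductions above require $n \ge 8$ in order to apply the inductive hypothesis on $G - v$.
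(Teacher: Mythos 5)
Conjecture~\ref{conj:trfull} is stated in the paper as an \emph{open problem}; the paper does not prove it and only reports that Thomas and Yoo~\cite{ThomasYoo} verified it for apex graphs. So there is no ``paper's proof'' to compare against. Judged on its own terms your sketch does not close, for two reasons. First, after eliminating triangles by iterated $\Delta Y$ transformations you invoke Conjecture~\ref{conj:trfree} to bound the resulting triangle-free graph, but Conjecture~\ref{conj:trfree} is itself open, and the paper proposes Conjecture~\ref{conj:trfull} precisely as a route \emph{toward} it; using one unproven conjecture to establish the other is circular within the paper's own framework. The only unconditional result of this type available here is Theorem~\ref{thm:main}, which covers bipartite graphs, not triangle-free graphs, so it cannot serve as the base of your induction. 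Second, you leave the residual regime ($t_v \le 3d(v)-10$ for every vertex and $k_T\le 7$ for every triangle) explicitly unresolved. The inequalities you extract --- $3t\le 6m-10n$, $9t\le 10m$, and $m\le 4n-10$ from Theorem~\ref{thm:mader} --- are correctly derived, but as you acknowledge they produce no contradiction, and the proposed structural endgame (a decomposition of linklessly embeddable graphs into near-planar pieces yielding an apex-like vertex) is not something the paper establishes or that you supply; it is a heuristic, not a lemma.

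For what it is worth, your local computations are sound: $\phi(G-v)=\phi(G)+d(v)-3-t_v/3$, and for a $\Delta Y$ move at a triangle $T$, $\phi(G')=\phi(G)+\bigl(11-\sum_{e\in T}c_e\bigr)/3$ since $t'=t-\sum_{e\in T}c_e+2$ by inclusion--exclusion, and both operations preserve linkless embeddability. But a potential-function induction requires that at every step some non-$\phi$-increasing reduction is available, and you have no argument that the residual case is vacuous. The hole sits exactly where the conjecture is hard, which is why it remains a conjecture.
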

\noindent Thomas and Yoo \cite{ThomasYoo} recently proved that Conjecture \ref{conj:trfull} holds for apex graphs, that is, graphs $G$ with a vertex $v$ so that $G-v$ is planar. One could speculate whether Conjecture~\ref{conj:mu} holds for triangle-free graphs,
but we do not have enough evidence to formally conjecture that.

The paper is organized as follows. 
In the next section we introduce definitions and notation.
In Section~\ref{sec:proof1} we state Theorem~\ref{thm:main2}, which implies Theorem~\ref{thm:main}
and prove half of it, proving some useful lemmas and disposing of vertices of degree 5.
In Section~\ref{sec:proof2} we complete the proof of Theorem~\ref{thm:main2} by disposing of vertices of degree 4.

\section{Notation and Definitions}
\label{sec:notation}
For positive integers $n_1, n_2, \ldots, n_k$ with $k \geq 2$, we let $K_{n_1, n_2, \ldots, n_k}$ denote the complete multipartite graph with $k$ independent sets of sizes $n_1, n_2, \ldots, n_k$. We let $K_{4,4}^-$ denote the graph obtained from
 $K_{4,4}$ by deleting an edge. We also let $K_6^{\Delta Y}$ denote the graph obtained from $K_6$ by performing a $\Delta Y$ transformation.

For a graph $G$ we write $V(G)$ for the vertex set of $G$ and $E(G)$ for the edge set of $G$. We write $\delta(G)$ for the minimum degree of $G$ and $\Delta(G)$ for the maximum degree of $G$. Suppose $v$ is a vertex of $G$ and $S$ is a subset of $V(G)$. Then we write $G[S]$ for the induced subgraph of $G$ with vertex set $S$ and $G-S$ for the induced subgraph of $G$ with vertex set $V(G)-S$. We write $G-v$ for $G-\{v\}$. We write $d_G(v)$, or $d(v)$ if the graph is understood from context, for the degree of $v$ in $G$. We write $N_G(S)$ for the set of all vertices in $V(G)-S$ that are adjacent to some vertex in $S$. We write $N(S)$ if the graph is understood from context, and we write $N(v)$ for $N(\{v\})$. We use $N[v]$ to denote $N(v) \cup \{v\}$. 

If $G$ is a graph with $S$ and $T$ disjoint subsets of $V(G)$, we say an edge $uv \in E(G)$ is \textit{between} $S$ and $T$ if $S\cap \{u,v\} \neq \emptyset$ and $T\cap \{u,v\} \neq \emptyset$. If $S$ consists of a single vertex $v$, we may talk about the edges between $v$ and $T$. Given a graph $G$, we say that $\{X_0, X_1\}$ is a \textit{bipartition} of $G$ if $\{X_0, X_1\}$ is a partition of $V(G)$ so that all edges of $G$ are between $X_0$ and $X_1$. 

We define a \textit{separation} of a graph $G$ to be a pair of sets $(A,B)$ with union $V(G)$ such that $G$ has no edge between $A-B$ and $B-A$. The \textit{order} of a separation $(A,B)$ is $|A \cap B|$. We also say that a separation of order $k$ is a $k$-separation. A separation $(A,B)$ is \textit{non-trivial} if both $A-B$ and $B-A$ are non-empty. We say that a separation $(A,B)$ is \textit{minimal} if there does not exist a non-trivial separation $(A',B')$ of $G$ with $A'\cap B' \subsetneq A \cap B$.

It is convenient for us to give the following related definition. We say a \textit{super-separation} of a graph $G$ is a pair of graphs $(G_0, G_1)$ such that $V(G)\subseteq V(G_0)\cup V(G_1)$, and $E(G) \subseteq E(G_0)\cup E(G_1)$, and both $G_0$ and $G_1$ are isomorphic to minors of $G$. We say a super-separation $(G_0, G_1)$ of $G$ is \textit{non-trivial} if both $G_0$ and $G_1$ are isomorphic to proper minors of $G$. (That is, neither $G_0$ nor $G_1$ is isomorphic to $G$.) We say that the \textit{order} of a super-separation $(G_0, G_1)$ of $G$ is $|V(G_0)|+ |V(G_1)|-|V(G)|$. Finally we say a super-separation $(G_0, G_1)$ is \textit{bipartite} if both $G_0$ and $G_1$ are bipartite. 

Note that if $(A,B)$ is a (non-trivial) separation of $G$ of order $k$, then $(G[A], G[B])$ is a (non-trivial) super-separation of $G$ of order $k$. Furthermore, if $G$ is bipartite then the super-separation $(G[A],G[B])$ is bipartite. In this paper, each super-separation we use will be constructed from a non-trivial separation $(A,B)$ as follows. We will construct a graph $G_A$ formed from $G[A]$ by possibly adding some edges with both ends in $A \cap B$, and possibly a new vertex $a\notin V(G)$ with neighbors in $A \cap B$. We will show that $G_A$ is isomorphic to a proper minor of $G$ by contracting some edges with at least one end in $B$. A graph $G_B$ will be formed similarly from $G[B]$, so that $(G_A, G_B)$ is a non-trivial super-separation. 

Finally, if $G$ is a bipartite graph with bipartition $\{X_0, X_1\}$ and $S \subseteq V(G)$, then we will write $\overline{G[S]}$ for the \textit{bipartite complement} of $G[S]$. That is, $\overline{G[S]}$ is the graph on vertex set $S$ where $uv$ is an edge of $\overline{G[S]}$ if and only if exactly one of $u$ and $v$ is in $X_0$ and $uv \notin E(G)$.

\section{Proof of Main Theorem: Vertices of Degree 5}
\label{sec:proof1}
By Theorem \ref{thm:linkequiv}, the following theorem implies Theorem \ref{thm:main}.
\begin{theorem}
\label{thm:main2}
Every bipartite graph on $n\geq 5$ vertices with no $K_6$, $K_{1,3,3}$, $K_{4,4}^-$, or $K_6^{\Delta Y}$ minor has at most $3n-10$ edges, unless it is isomorphic to the complete bipartite graph $K_{3,n-3}$.
\end{theorem}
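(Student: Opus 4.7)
The plan is to argue by minimum counterexample. Let $G$ be a bipartite graph on $n \ge 5$ vertices with no $K_6$, $K_{1,3,3}$, $K_{4,4}^-$, or $K_6^{\Delta Y}$ minor, satisfying $|E(G)| \ge 3n-9$ and $G \not\cong K_{3,n-3}$; I pick such a $G$ with $|V(G)|$ minimum, and subject to that, $|E(G)|$ maximum. Small cases ($n \le 8$) are easy to dispose of by inspecting the possible bipartitions and observing that the only bipartite graph with enough edges is $K_{3,n-3}$ itself or contains $K_{4,4}^-$ as a subgraph.

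The first step is to exclude non-trivial bipartite super-separations of small order. If $(G_0,G_1)$ is such a super-separation, then each $G_i$ is a proper bipartite minor of $G$ with no forbidden minor, to which the minimality of $G$ applies; the edge bounds on $G_0$ and $G_1$ combine (after correctly accounting for the shared vertices and any added edges or vertex of the super-separation) to force $|E(G)| \le 3n-10$, contradicting the choice of $G$. This ensures that $G$ is internally well-connected, so local reductions at low-degree vertices can be carried out. A short deletion argument then gives $\delta(G) \ge 4$: if $v$ has degree at most $3$, then $G-v$ is a bipartite graph on $n-1$ vertices with no forbidden minor, and the minimality hypothesis forces either $|E(G-v)| \le 3(n-1)-10$ (giving $|E(G)| \le 3n-10$, a contradiction) or $G-v \cong K_{3,n-4}$, in which case re-attaching $v$ produces either $K_{3,n-3}$ (excluded) or a $K_{4,4}^-$ subgraph for $n \ge 8$.

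The bulk of the proof, spanning Sections~\ref{sec:proof1} and~\ref{sec:proof2}, shows that $G$ has no vertex of degree $5$ and no vertex of degree $4$, respectively. Fix $v$ of degree $d \in \{4,5\}$ with $N(v) = \{u_1,\ldots,u_d\}$ lying on the opposite side of the bipartition. The argument splits into cases according to the structure of the $2$-neighborhood of $v$: which pairs $u_i,u_j$ share a common neighbor other than $v$, how many such common neighbors exist, and how they are interconnected in $G-v$. In every configuration, one of two things must happen: either (a) the configuration directly realizes a $K_{1,3,3}$, $K_{4,4}^-$, $K_6$, or $K_6^{\Delta Y}$ minor using $v$ as a hub together with carefully chosen common neighbors of the $u_i$'s, where one must be careful that all contractions respect the bipartite minor relation; or (b) we can build, via a bipartite super-separation, a smaller bipartite graph $G'$ with no forbidden minor and $|E(G')| \ge 3|V(G')|-9$, contradicting the minimality of $G$. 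The construction in (b) typically deletes $v$ and either introduces a single new vertex on $v$'s side whose neighborhood is a chosen subset of $N(v)$ (encoding a contractible branch in $G-v$) or adds a few bipartite shortcut edges among the $u_i$'s and nearby vertices.

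I expect the degree-$4$ case (Section~\ref{sec:proof2}) to be the principal obstacle, since a neighborhood of size $4$ is the smallest one available and offers the least flexibility for both direct minor extraction and super-separation construction; the case analysis there is correspondingly the longest. Once vertices of degree $5$ and $4$ are both ruled out, one has $\delta(G) \ge 6$, and a short counting argument based on the edge bound together with the fact that $G$ is bipartite and contains no $K_{4,4}^-$ subgraph produces the final contradiction.
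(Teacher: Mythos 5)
Your high-level plan follows the paper's general strategy: a minimum counterexample, ruling out small super-separations, establishing a lower bound on minimum degree, and then killing vertices of degree $5$ and $4$ via a mix of direct minor extraction and super-separation arguments. But two related choices in the setup create a genuine gap at the end.

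First, you take $G$ with $|V(G)|$ minimum and, subject to that, $|E(G)|$ \emph{maximum}. The paper instead minimizes $|V(G)|+|E(G)|$, and this matters: minimizing edges (after vertices) lets one argue that deleting any edge of $G$ produces a graph that is no longer a counterexample, forcing $|E(G)|\le 3n-8$, and hence by handshaking $\delta(G)\le 5$. With edges maximized you lose this upper bound entirely, and nothing else in your outline recovers it.

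Second, because you never establish $\delta(G)\le 5$, your endgame does not close. You propose to derive a contradiction from ``$\delta(G)\ge 6$'' together with bipartiteness and the absence of a $K_{4,4}^-$ subgraph, but no such counting argument exists: bipartite $K_{4,4}^-$-free graphs can have $\Theta(n^{7/4})$ edges (K\H{o}v\'ari--S\'os--Tur\'an), vastly more than $3n$, so excluding a $K_{4,4}^-$ subgraph is nowhere near strong enough to bound edges linearly. In the paper, the conclusion is forced the other way: minimality gives $\delta(G)\le 5$, so after showing there is no vertex of degree $5$ there must be one of degree $4$, and the entire second half of the proof is devoted to deriving a contradiction from that single degree-$4$ vertex. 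You should replace the ``maximum $|E(G)|$'' choice with minimum (or minimize $|V(G)|+|E(G)|$), derive $\delta(G)\le 5$ from it, and drop the final $\delta(G)\ge 6$ step, whose claimed contradiction is illusory.

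The rest of the outline is reasonable as a sketch; in particular your super-separation lemma and the vertex-deletion argument for $\delta(G)\ge 4$ match the paper's. But be aware that the proposal is extremely compressed on exactly the parts that carry all the weight: the paper needs a no-$K_{3,3}$-subgraph lemma and a common-neighbors lemma before the degree-$5$ argument, and the degree-$4$ case occupies an entire section with several auxiliary separation lemmas. Asserting that ``in every configuration, one of two things must happen'' is the statement of the goal, not a proof of it.
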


The rest of the paper is dedicated to proving Theorem \ref{thm:main2}.
%
Going for a contradiction, suppose that Theorem \ref{thm:main2} is false. Let $G$ be a counterexample with $|V(G)|+|E(G)|$ minimum. Write $n \coloneqq |V(G)|$, and let $\{X_0, X_1\}$ be a bipartition of $G$.

We begin by giving a brief outline of our proof strategy. First we will show an easy lemma, and that $4 \leq \delta(G) \leq 5$. Then we show that $G$ cannot have certain separations and super-separations of small order. It follows that $G$ has no subgraph isomorphic to $K_{3,3}$: otherwise it either has a $K_{1,3,3}$ minor or a separation of small order. Next we show that if $v$ is a vertex of degree 4 or 5 and $x$ and $y$ are neighbors of $v$, then $x$ and $y$ have several common neighbors other than $v$. Then it is fairly easy to show that $G$ has no vertex $v$ of degree 5: for every pair of distinct neighbors $x$ and $y$ of $v$, let $v_{x,y}$ be a vertex other than $v$ that is adjacent to both $x$ and $y$. If all ten $v_{x,y}$ are distinct, then $G$ has a $K_6$ minor. Otherwise we find a $K_{3,3}$ subgraph or another forbidden minor. In Section \ref{sec:proof2} we will deal with the case that $\delta(G)=4$. 

We begin with two easy lemmas:

\begin{lemma}
\label{noVertices}
$n \geq 7$ 
\end{lemma}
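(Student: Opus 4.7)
The plan is a direct case analysis of the small values $n = 5$ and $n = 6$, using only the extremal count for bipartite graphs and the definition of a counterexample.

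Since $G$ is a counterexample to Theorem~\ref{thm:main2}, it has $|E(G)| \ge 3n - 9$ edges and is not isomorphic to $K_{3,n-3}$. The goal is to show both $n=5$ and $n=6$ force $G \cong K_{3,n-3}$, contradicting the second property. The first step is the trivial observation that any bipartite graph on $n$ vertices has at most $\lfloor n/2 \rfloor \lceil n/2 \rceil$ edges, with equality only for the balanced (or near-balanced) complete bipartite graph.

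For $n = 5$, the counterexample bound gives $|E(G)| \ge 6$, while the maximum number of edges in a bipartite graph on $5$ vertices is $2 \cdot 3 = 6$, achieved uniquely by $K_{2,3}$. But $K_{2,3} = K_{3,n-3}$ when $n=5$, contradicting $G \not\cong K_{3,n-3}$. For $n = 6$, the counterexample bound gives $|E(G)| \ge 9$, and the maximum number of edges in a bipartite graph on $6$ vertices is $3 \cdot 3 = 9$, achieved uniquely by $K_{3,3} = K_{3,n-3}$, again a contradiction.

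There is no real obstacle here: the argument is just a comparison of the required edge count $3n - 9$ with the maximum size of a bipartite graph on $n$ vertices. In particular, there is no need to invoke the excluded-minor hypothesis or the structural machinery developed later; the lemma is a warm-up that eliminates the trivially small cases so that subsequent arguments may assume $n \ge 7$.
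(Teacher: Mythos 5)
Your proposal is correct and rests on the same underlying computation as the paper's proof: for $n \in \{5,6\}$ the maximum number of edges in a bipartite graph on $n$ vertices, namely $\lfloor n/2\rfloor \lceil n/2\rceil$, coincides with the counterexample lower bound $3n-9$, and the extremal graph is $K_{3,n-3}$. The paper packages this slightly differently---first disposing of the case that $G$ is a (necessarily proper) subgraph of $K_{3,n-3}$, then bounding $|X_0||X_1| \le 4(n-4)$ when the bipartition is not $(3,n-3)$---but this is a reorganization of the same idea rather than a different route.
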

\begin{proof}
Otherwise, $n \in \{5,6\}$. Then $\lceil n/2 \rceil = 3$ and $\lfloor n/2 \rfloor = n-3$. If $G$ is a subgraph of $K_{3,n-3}$, then since by assumption $G$ is not isomorphic to $K_{3,n-3}$, we have $|E(G)|\leq |E(K_{3,n-3})|-1=3n-10$, a contradiction. So $G$ is not a subgraph of $K_{3, n-3}$. Then
\begin{align*}3n-9&\leq |E(G)| 
\leq |X_0||X_1| 
\leq (\lceil n/2 \rceil+1)(\lfloor n/2 \rfloor-1)
=4(n-4).\end{align*}
This gives us $n \geq 7$, a contradiction.
\end{proof}

\begin{lemma}
\label{minDegree}
$4\leq \delta(G) \leq 5$
\end{lemma}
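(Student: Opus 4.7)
The plan is to exploit the minimality of $G$: for any bipartite graph $G'$ obtained from $G$ by removing a single vertex or edge, $G'$ is still bipartite, still free of the forbidden minors, and has $|V(G')|\ge 5$ by Lemma~\ref{noVertices}; and since $|V(G')|+|E(G')|<|V(G)|+|E(G)|$, minimality forces either $|E(G')|\le 3|V(G')|-10$ or $G'\cong K_{3,|V(G')|-3}$.

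For the upper bound $\delta(G)\le 5$, assume $\delta(G)\ge 6$, so $|E(G)|\ge 3n$. Apply the principle to $G-uv$ for any edge $uv\in E(G)$. The first alternative gives $|E(G)|\le 3n-9<3n$, a contradiction. For the second, the bipartition of $K_{3,n-3}$ (unique because $n\ge 7$) would coincide with $(X_0,X_1)$, so $u$ and $v$ on opposite sides of $(X_0,X_1)$ would be adjacent in $G-uv$, contradicting $uv\notin E(G-uv)$.

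For the lower bound $\delta(G)\ge 4$, assume some vertex $v$ has $d(v)\le 3$ and apply the principle to $G-v$. If $|E(G-v)|\le 3(n-1)-10$, then $|E(G)|\le 3n-10$, contradicting that $G$ is a counterexample. Otherwise $G-v\cong K_{3,n-4}$; write its bipartition as $(A,B)$ with $|A|=3$. There are two possibilities for how $v$ fits into $G$'s bipartition. If $v$ lies on the same side as $B$, then $G$ has bipartition $(3,n-3)$ with $v$'s neighbors in $A$, and $d(v)\le 3$; either $d(v)=3$, joining $v$ to all of $A$ and forcing $G\cong K_{3,n-3}$, or $d(v)\le 2$ and $|E(G)|\le 3(n-4)+2 = 3n-10$; both yield a contradiction. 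If $v$ lies on the same side as $A$, then $G$ has bipartition $(4,n-4)$, and the counterexample inequality $|E(G)|\ge 3n-9$ forces $d(v)=3$. For $n\ge 8$, picking $b\in B\setminus N(v)$ exhibits $G[A\cup\{v\}\cup N(v)\cup\{b\}]$ as $K_{4,4}$ missing only the edge $vb$, i.e.\ $K_{4,4}^-$, contradicting the forbidden-minor hypothesis. For $n=7$ one has $|B|=3$, so $N(v)=B$, forcing $G\cong K_{4,3}=K_{3,n-3}$, again a contradiction.

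The main obstacle is the last sub-case (with $v$ attached to the small side): edge counting alone does not rule out a counterexample, and one must exhibit a forbidden minor. The key observation is that the $(4,n-4)$ bipartition together with the degree equality $d(v)=3$ makes $K_{4,4}^-$ appear as an induced subgraph as soon as $|B|\ge 4$, while the boundary case $n=7$ collapses to the exceptional graph $K_{3,n-3}$ itself.
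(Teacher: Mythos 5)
Your proof is correct and follows essentially the same route as the paper's: exploit minimality of $|V(G)|+|E(G)|$ under vertex/edge deletion, count edges, and, in the lower-bound case where $G-v\cong K_{3,n-4}$, exhibit an induced $K_{4,4}^-$. The paper phrases the upper bound a bit more directly (deducing $|E(G)|\le 3n-8$ from minimality and then applying the handshaking lemma rather than arguing by contradiction from $\delta(G)\ge 6$), but the underlying idea is identical.
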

\begin{proof}
Since $G$ was chosen to be a counterexample with $|V(G)|+|E(G)|$ minimum, $|E(G)| \leq 3n-8$. So $\delta(G)\leq 5$ by the handshaking lemma.

Now, let $v$ be a vertex of minimum degree. Since $n \geq 6$ by Lemma \ref{noVertices}, either $G-v$ is isomorphic to $K_{3,n-4}$ and $|E(G-v)|=3(n-1)-9$, or $|E(G-v)| \leq 3(n-1)-10$. If $d(v)\leq 2$, then
\begin{align*}|E(G)|&=|E(G-v)|+d(v)
\leq 3(n-1)-9+2
=3n-10,\end{align*}
a contradiction. Now suppose that $d(v)=3$. If $G-v$ is not isomorphic to $K_{3,n-4}$, then similarly $|E(G)|\leq 3n-10$, and we are done. 

So $G-v$ is isomorphic to $K_{3,n-4}$. Without loss of generality suppose that $v \in X_0$. If $N(v) = X_1$, then $G$ is isomorphic to $K_{3,n-3}$, a contradiction. So there exists a vertex $u \in X_1-N(v)$. Then $|X_{0}-\{v\}|=3$, and $G[X_0 \cup \{u\}\cup N(v)]$ is isomorphic to $K_{4,4}^-$, a contradiction. So $\delta(G) = d(v)\geq 4$, completing the proof of the lemma.
\end{proof}

Next we prove two lemmas on separations and super-separations of $G$. Observe that since $\delta(G) \geq 4$ by Lemma \ref{minDegree}, if $(A,B)$ is a non-trivial separation of $G$, then $(G[A],G[B])$ is a non-trivial bipartite super separation of $G$ such that $|V(G[A])|, |V(G[B])| \geq 5$. We will frequently apply the following lemma to such a case.

\begin{lemma}
\label{separation}
Let $(G_0, G_1)$ be a non-trivial bipartite super-separation of $G$ of order $k$ such that $|V(G_0)|, |V(G_1)| \geq 5$ and neither $G_0$ nor $G_1$ is isomorphic to $K_{3,t}$ for any $t$. Then $3k \geq |E(G_0)|+|E(G_1)| -|E(G)|+11$.
\end{lemma}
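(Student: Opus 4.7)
The plan is to apply the minimality of $G$ as a counterexample to each of the two proper minors $G_0$ and $G_1$ separately, and then add the resulting edge bounds, using the identity $|V(G_0)|+|V(G_1)|=n+k$ coming from the definition of the order of a super-separation.

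First I would verify that each $G_i$ falls under the hypotheses of Theorem~\ref{thm:main2}: by assumption each is bipartite and has $|V(G_i)|\ge 5$, and as a minor of $G$ each inherits the exclusion of $K_6$, $K_{1,3,3}$, $K_{4,4}^-$, and $K_6^{\Delta Y}$ as minors. Non-triviality of $(G_0,G_1)$ makes each $G_i$ a \emph{proper} minor of $G$; since vertex deletion, edge deletion, and edge contraction each strictly decrease the quantity $|V|+|E|$ of a simple graph, each $G_i$ is strictly smaller than $G$ in the order with respect to which $G$ was chosen minimum. Hence $G_i$ is not itself a counterexample to Theorem~\ref{thm:main2}, so either $|E(G_i)|\le 3|V(G_i)|-10$ or $G_i\cong K_{3,|V(G_i)|-3}$; and the hypothesis that neither $G_i$ is isomorphic to any $K_{3,t}$ forces the first alternative for $i=0,1$.

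Finally I would sum the two bounds, apply $|V(G_0)|+|V(G_1)|=n+k$ to obtain $|E(G_0)|+|E(G_1)|\le 3(n+k)-20$, and then subtract $|E(G)|\ge 3n-9$ (which holds because $G$ is itself a counterexample) from both sides to obtain $|E(G_0)|+|E(G_1)|-|E(G)|\le 3k-11$, which rearranges to the stated inequality. I do not anticipate a real obstacle: the hypotheses of the lemma are precisely engineered to make the minimality argument apply in both factors, with the $K_{3,t}$ exclusion closing off the only exceptional case of the inductive bound $3|V(G_i)|-10$.
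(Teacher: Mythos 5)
Your proposal is correct and is essentially the paper's own proof: both apply the minimality of $G$ to each factor $G_i$ (using the bipartiteness, $|V(G_i)|\ge 5$, and non-$K_{3,t}$ hypotheses to get $|E(G_i)|\le 3|V(G_i)|-10$), sum via $|V(G_0)|+|V(G_1)|=n+k$, and subtract $|E(G)|\ge 3n-9$. You are somewhat more explicit than the paper in justifying that a proper minor strictly decreases $|V|+|E|$, but the argument is the same.
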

\begin{proof} For convenience, write $e = |E(G_0)|+|E(G_1)| -|E(G)|$. By the conditions of the lemma and since $G$ is a counterexample with $|V(G)|+|E(G)|$ minimum, \begin{align*}
3n-9&\leq |E(G)|\\
&= |E(G_0)|+|E(G_1)|-e \\
&\leq 3(|V(G_0)|+|V(G_1)|)-20-e
\\&=3(n+k)-20-e.
\end{align*}
So $3k \geq e+11$, as desired.
\end{proof}

Next we show that $G$ does not have certain separations of small order.
\begin{lemma}
\label{connectivity}
Let $(A, B)$ be a non-trivial separation of $G$ such that for each $i \in \{0,1\}$, $|A \cap B\cap X_i| \leq 3$. Then $|A \cap B|=6$ and $\Delta(G[A \cap B]) \leq 1$.
\end{lemma}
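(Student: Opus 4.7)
The plan is to argue by contradiction, supposing $|A \cap B| \leq 5$ or $\Delta(G[A \cap B]) \geq 2$, and to apply Lemma~\ref{separation} to the super-separation $(G[A], G[B])$ (and augmented versions of it) to derive a contradiction with $|E(G)| \geq 3n - 9$.

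Set $k := |A \cap B|$ and $k_i := |A \cap B \cap X_i|$ for $i \in \{0,1\}$. Since $\delta(G) \geq 4$ by Lemma~\ref{minDegree} and every vertex in $A - B$ has all its neighbors in $A$, we have $|A| \geq 5$, and symmetrically $|B| \geq 5$. Hence $(G[A], G[B])$ is a non-trivial bipartite super-separation of order $k$ with both pieces of size at least~$5$. When neither piece is isomorphic to some $K_{3, t}$, Lemma~\ref{separation} applies and, using $|E(G[A])| + |E(G[B])| - |E(G)| = |E(G[A \cap B])|$, gives $|E(G[A \cap B])| \leq 3k - 11$. This is already inconsistent for $k \leq 3$ (the right-hand side is negative) and very restrictive for $k \in \{4, 5\}$.

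To handle the $K_{3, t}$ exceptions I augment the super-separation. If $G[A] \cong K_{3, t}$, then one bipartition side of $A$ has exactly three vertices; one checks that $A \cap B \cap X_i$ must then lie entirely in that small side for some $i$, and in particular $k_{1-i}$ is controlled. I enlarge $G[A]$ by adding a new apex vertex $a \in X_i$ adjacent to all of $A \cap B \cap X_{1-i}$, which is realized as a minor of $G$ by contracting an appropriately chosen connected subgraph of $G[B - A]$; such a branch set with the required neighborhood exists because each vertex of $A \cap B$ has at least four neighbors in $G$, forcing a path through $G[B - A]$ to the relevant vertices. The augmentation breaks the $K_{3, t}$ form of $G[A]$, so Lemma~\ref{separation} applies to the augmented super-separation and yields $|E(G[A \cap B])| + |A \cap B \cap X_{1-i}| \leq 3(k+1) - 11 = 3k - 8$. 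A symmetric augmentation handles $G[B] \cong K_{3, t}$ and, when needed, both simultaneously.

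I then case-split on $k$ and on $G[A \cap B]$. For $k \leq 3$ the (augmented) bounds immediately contradict $|E(G)| \geq 3n - 9$, using the fact that $G \not\cong K_{3, n-3}$ to rule out degenerate $K_{3,t}$ configurations. For $k \in \{4, 5\}$, the bound $|E(G[A \cap B])| \leq 3k - 11$ combined with the bipartite constraint $|E(G[A \cap B])| \leq k_0 k_1$ produces the desired contradiction. For $k = 6$ with $\Delta(G[A \cap B]) \geq 2$, I use a vertex $v \in A \cap B$ with two neighbors in $A \cap B$ to build a further refinement — for instance, contracting a path through $G[B - A]$ to add an extra edge in $G[A \cap B]$, or adding a second apex — that pushes the super-separation past the capacity of $G[A \cap B]$. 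The main obstacle is the casework around $K_{3,t}$ configurations: the augmentations must be chosen so that the resulting pieces are neither isomorphic to $G$ (so the super-separation stays non-trivial and proper) nor isomorphic to $K_{3, t}$ (so Lemma~\ref{separation} applies), while verifying in each case that the required branch set in $G[B - A]$ (or $G[A - B]$) has the correct neighborhood on the appropriate bipartition side — this relies on $\delta(G) \geq 4$ together with $k_0, k_1 \leq 3$.
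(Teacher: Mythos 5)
Your proposal shares the overall shape of the paper's argument (build a super-separation on $(A,B)$, apply Lemma~\ref{separation}, case-split on $k := |A\cap B|$), but it misses the step that actually makes the cases $k\in\{4,5,6\}$ close. The bare application to $(G[A],G[B])$ gives only $|E(G[S])|\le 3k-11$ where $S=A\cap B$. For $k=4$ this is $|E(G[S])|\le 1$ and for $k=5$ it is $|E(G[S])|\le 4$; the bipartite constraint $|E(G[S])|\le k_0k_1$ is another \emph{upper} bound and so cannot collide with these to produce a contradiction. The paper's proof gets a strictly stronger inequality: first it replaces $(A,B)$ by a \emph{minimal} separation, which guarantees some component of $G[B-A]$ (and of $G[A-B]$) has neighborhood equal to all of $S$; then it picks a vertex $z\in S$ of maximum degree in the bipartite complement $\overline{G[S]}$ and contracts such a component onto $z$ on both sides, adding $d_{\overline{G[S]}}(z)$ edges to each piece. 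This gives $3k\ge |E(G[S])|+2d_{\overline{G[S]}}(z)+11$, and the case analysis then exploits a counting identity: for $k=4$, $d_{\overline{G[S]}}(z)=0$ forces $G[S]$ to be a complete bipartite graph on two nonempty sides, so $|E(G[S])|\ge3$; for $k=5$ (respectively $k=6$ with $\Delta(G[S])\ge2$), summing $|E(G[S])|+d_{\overline{G[S]}}(z_1)+d_{\overline{G[S]}}(z_2)$ over the two appropriate vertices $z_1,z_2$ yields a lower bound of $6$ (respectively $\Delta(G[S])+6\ge8$), and $2d_{\overline{G[S]}}(z)\ge d_{\overline{G[S]}}(z_1)+d_{\overline{G[S]}}(z_2)$ since $z$ is chosen maximum. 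Without this augmentation your inequalities simply are not tight enough.

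A secondary point: the apex augmentation you build to dodge the $K_{3,t}$ exception of Lemma~\ref{separation} is not needed here, because that exception never arises. Since $\delta(G)\ge4$ and $|S\cap X_i|\le 3$ for each $i$, any vertex in $A-B$ has a neighbor in $(A-B)$ on the other side of the bipartition, which quickly forces both $|A\cap X_0|\ge4$ and $|A\cap X_1|\ge4$ (and symmetrically for $B$); a graph with at least four vertices on each side of its bipartition cannot be $K_{3,t}$. So the thing to verify is not that your pieces avoid $K_{3,t}$ but that you can realize the strengthened super-separation as a minor, which is exactly what minimality of $(A,B)$ buys you.
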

\begin{proof}
Suppose otherwise for some separation $(A,B)$. Note that any non-trivial separation $(A',B')$ of $G$ with $A'\cap B' \subsetneq A \cap B$ also violates the lemma. Thus we may assume that $(A, B)$ is minimal.

First we show that both $A$ and $B$ have at least four vertices in each side of the bipartition of $G$. Let $v \in A-B$, and without loss of generality assume that $v \in X_0$. Then $|X_1 \cap A| \geq |N(v)| \geq 4$ since $\delta(G)\geq 4$ by Lemma \ref{minDegree}. Also, since $|A \cap B \cap X_1| \leq 3$, there exists a vertex $u \in N(v)-(A \cap B)$. Then similarly $|X_0\cap A|\geq |N(u)|\geq 4$. The same argument shows that $B$ has at least four vertices in each side of the bipartition of $G$.

Now for convenience write $S \coloneqq A \cap B$. So $|S| \leq 6$. Let $z \in S$ so that $d_{\overline{G[S]}}(z)$ is maximum, where $\overline{G[S]}$ is the bipartite complement of $G[S]$. Let $G_A$ be the graph formed from $G[A]$ by adding edges between $z$ and every vertex in $N_{\overline{G[S]}}(z)$. We can see that $G_A$ is a minor of $G$ by contracting some component of $G[B-A]$ to $z$ and by the minimality of $(A,B)$. Furthermore $G_A$ is bipartite, has fewer vertices than $G$, and has at least four vertices in each side of the bipartition of $G$. So $G_A$ is not isomorphic to $K_{3,t}$ for any $t$. Define $G_B$ analogously, by adding edges between $z$ and every vertex in $N_{\overline{G[S]}}(z)$ to $G[B]$.

We have shown that $(G_A, G_B)$ is non-trivial bipartite super-separation of $G$ so that $G_A$ and $G_B$ both have at least five vertices, and neither $G_A$ nor $G_B$ is isomorphic to $K_{3,t}$ for any $t$. Furthermore, the order of $(G_A, G_B)$ is $|S|$ and
\begin{align*}|E(G_A)|+|E(G_B)|-|E(G)|&=|E(G[A])|+|E(G[B])|+2d_{\overline{G[S]}}(z) -|E(G)|
\\&= |E(G[S])|+2d_{\overline{G[S]}}(z).
\end{align*}

So by Lemma \ref{separation} applied to the super-separation $(G_A, G_B)$, we have $3|S|\geq |E(G[S])|+2d_{\overline{G[S]}}(z) +11$. Thus $|S| \geq 4$. We proceed by cases.

\begin{flushleft}
\textit{Case:} $|S|=4$
\end{flushleft}

Then $|E(G[S])|+2d_{\overline{G[S]}}(z)\leq 1$. So $d_{\overline{G[S]}}(z)=0$. Thus $G[S]$ is a complete bipartite graph on four vertices with at least one vertex on each side of its bipartition. So $|E(G[S])|\geq 3$, a contradiction.

\begin{flushleft}
\textit{Case:} $|S|=5$
\end{flushleft}
Since $|X_0 \cap S|, |X_1 \cap S| \leq 3$ and by symmetry between $X_0$ and $X_1$, we may assume that $|X_0\cap S|=3$ and $|X_1\cap S|=2$. Let $z_1$ and $z_2$ be the vertices in $X_1 \cap S$. By the definition of $z$,\begin{align*}
    4&\geq |E(G[S])|+2d_{\overline{G[S]}}(z)
    \geq |E(G[S])|+d_{\overline{G[S]}}(z_1)+d_{\overline{G[S]}}(z_2)= 6,
\end{align*}a contradiction.

\begin{flushleft}
\textit{Case:} $|S|=6$ and $\Delta(G[S])\geq 2$
\end{flushleft}

Then $|X_0\cap S|=|X_1\cap S|=3$. Let $z_1$ and $z_2$ be the other vertices on the same side of the bipartition of $G[S]$ as the vertex of maximum degree. Then \begin{align*}
    7\geq |E(G[S])|+2d_{\overline{G[S]}}(z)
    \geq |E(G[S])|+d_{\overline{G[S]}}(z_1)+d_{\overline{G[S]}}(z_2)
= \Delta(G[S])+6
    \geq 8,
\end{align*}a contradiction.
\end{proof}

Next we observe that $G$ has no $K_{3,3}$ subgraph, and then we show that common neighbors of a vertex of degree 4 or 5 in fact share several common neighbors.

\begin{lemma}
\label{noK33}
$G$ does not have a subgraph isomorphic to $K_{3,3}$.
\end{lemma}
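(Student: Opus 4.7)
Suppose for contradiction that $G$ contains a subgraph isomorphic to $K_{3,3}$, with bipartition $(A,B)$ where $A = \{a_1,a_2,a_3\} \subseteq X_0$ and $B = \{b_1,b_2,b_3\} \subseteq X_1$. Set $S \coloneqq A \cup B$, so $|S| = 6$. The goal is to derive a contradiction by exhibiting a $K_{1,3,3}$ minor of $G$. Since $n \geq 7$ by Lemma \ref{noVertices}, $V(G) \setminus S$ is non-empty.

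The first key step is to observe that every vertex $s \in S$ must have a neighbor in $V(G) \setminus S$. Indeed, $s$ lies in one class of the bipartition, and $S$ contains exactly three vertices of the opposite class, so $s$ has at most $3$ neighbors inside $S$; since $\delta(G) \geq 4$ by Lemma \ref{minDegree}, $s$ is forced to have at least one neighbor outside $S$.

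Next I would show that $G - S$ is connected. If not, partition the components of $G - S$ into two non-empty groups with vertex sets $U$ and $W$. Then $(S \cup U, S \cup W)$ is a non-trivial separation of $G$ of order $6$ with $3$ vertices in each bipartition class. By Lemma \ref{connectivity} this would force $\Delta(G[S]) \leq 1$; but $G[S] \supseteq K_{3,3}$, so $\Delta(G[S]) \geq 3$, a contradiction. Hence $G - S$ is connected.

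Finally, contract all of $G - S$ to a single vertex $v^{*}$. By the first step above, $v^{*}$ is adjacent in the resulting minor to every vertex of $S$. Together with the complete bipartite graph between $A$ and $B$ coming from the $K_{3,3}$ subgraph, the subgraph induced on $\{v^{*}\} \cup A \cup B$ in the contracted graph is precisely $K_{1,3,3}$ (with parts $\{v^{*}\}$, $A$, $B$), because $A$ and $B$ are independent sets in the bipartite $G$. This contradicts the hypothesis that $G$ has no $K_{1,3,3}$ minor. There is no real obstacle here: the work is split between applying Lemma \ref{connectivity} in the disconnected case and verifying that contracting $G-S$ genuinely produces a vertex adjacent to all of $S$, both of which follow directly from $\delta(G) \geq 4$.
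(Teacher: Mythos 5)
Your proof is correct and uses essentially the same approach as the paper: both invoke Lemma~\ref{connectivity} to rule out a nontrivial separation with cut set inside $V(H)$, then contract $G-V(H)$ to a single vertex to produce a $K_{1,3,3}$ minor. The paper does it a bit more economically by applying Lemma~\ref{connectivity} once to $(C\cup N(C),\,V(G)-C)$ for a single component $C$, which simultaneously yields both connectivity of $G-V(H)$ and $N(C)=V(H)$, whereas you split these into two steps (one via Lemma~\ref{connectivity}, one via $\delta(G)\ge 4$).
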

\begin{proof}
Suppose $H$ is a subgraph of $G$ isomorphic to $K_{3,3}$. Since $n \geq 7$ by Lemma \ref{noVertices}, the graph $G-V(H)$ is non-empty. Let $C$ be the vertex set of some component of $G-V(H)$. Then since $(C \cup N(C)) \cap (V(G)-C) = N(C) \subseteq V(H)$, by Lemma \ref{connectivity}, the separation $(C \cup N(C), V(G)-C)$ is trivial. Then $C \cup N(C)=V(G)$, and so $N(C) = V(H)$. So the graph obtained by contacting $C$ to a single vertex is isomorphic to $K_{1,3,3}$, a contradiction.
\end{proof}

\begin{lemma}
\label{inCommon}
Let $v \in V(G)$ be a vertex of degree 4 or 5. Let $x$ and $y$ be distinct vertices in $N(v)$. Then $x$ and $y$ share at least $7-d(v)$ common neighbors other than $v$. 
\end{lemma}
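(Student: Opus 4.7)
I would assume for contradiction that $|N(x)\cap N(y)\setminus\{v\}|\leq 6-d(v)$, set $C := N(x)\cap N(y)$ (noting that $v\in C$, so the contradiction hypothesis becomes $|C|\leq 7-d(v)$), and derive a violation of Lemma~\ref{separation} using a bipartite super-separation built from the separation $(A,B) := (N[v]\cup C,\ V(G)\setminus\{v\})$.  The separator here is $S := A\cap B = N(v)\cup(C\setminus\{v\})$ of size $d(v)+|C|-1$, with $|S\cap X_1|=d(v)$ and $|S\cap X_0|=|C|-1$. This is a valid separation because all neighbors of $v$ lie in $N(v)\subseteq S$, and it is non-trivial: if $V(G)$ were equal to $N[v]\cup C$ then any $z\in N(v)\setminus\{x,y\}$ would satisfy $N(z)\subseteq X_0\cap A = C$, giving $d(z)\leq |C|\leq 3$, contradicting Lemma~\ref{minDegree}.

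After reducing $(A,B)$ to a minimal separation of the same form, I would form the bipartite super-separation $(G_A,G_B)$ using the bipartite-complement trick from the proof of Lemma~\ref{connectivity}: choose $z\in S$ maximizing $d^* := d_{\overline{G[S]}}(z)$ and augment each of $G[A]$ and $G[B]$ by adding all $d^*$ edges from $z$ to its non-neighbors on the opposite side in $S$. Both augmented graphs are bipartite and, by minimality, are isomorphic to minors of $G$. Before applying Lemma~\ref{separation} I verify that neither side is isomorphic to some $K_{3,t}$. If $G_A\cong K_{3,t}$ then necessarily $|C|=3$ and $G[N[v]\cup C]\cong K_{3,d(v)}$, which contains a $K_{3,3}$ subgraph of $G$ and contradicts Lemma~\ref{noK33}. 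The case $G_B\cong K_{3,n-4}$ can be ruled out directly by analyzing the bipartition: $v\in X_1$ forces $|X_0|=n-4\geq 3$ and $N(x)=N(y)=X_1\setminus\{v\}$, so $|C\setminus\{v\}|\geq 3$, already contradicting the hypothesis for $d(v)\in\{4,5\}$.

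Lemma~\ref{separation} then yields
\[
3(d(v)+|C|-1) \;\geq\; e_{CN} + 2d^* + 11,
\]
where $e_{CN}$ is the number of edges between $C\setminus\{v\}$ and $N(v)$. Since each $c\in C\setminus\{v\}$ is adjacent to both $x$ and $y$, $e_{CN}\geq 2(|C|-1)$, and an averaging argument gives $d^*\geq 2((|C|-1)d(v)-e_{CN})/(d(v)+|C|-1)$. Combining these with the hypothesis $|C|\leq 7-d(v)$ and splitting on the small cases $|C|\in\{1,2,3\}$ produces the desired contradiction. The main obstacle is the extreme case $|C|=1$, in which $S=N(v)\subseteq X_1$ is edgeless and both $e_{CN}$ and $d^*$ vanish, so the basic super-separation is too weak; I would overcome this by inserting into $G_A$ additional virtual vertices corresponding to disjoint connected subgraphs of $V(G)\setminus N[v]$ whose neighborhoods cover $N(v)$, whose existence follows from the connectivity bounds implicit in Lemma~\ref{connectivity} (together with the observation, already used in the proof of Lemma~\ref{noK33}, that $G-\{x,y\}$ is connected).
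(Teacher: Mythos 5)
Your proposal takes a genuinely different route from the paper. The paper simply contracts $y$ and $v$ onto $x$ to obtain a bipartite minor $G'$ on $n-2$ vertices, computes $|E(G')| = |E(G)| - d(v) - c$ directly, and invokes minimality of $G$ to force $c = 6 - d(v)$ and $G' \cong K_{3,t}$, at which point a short analysis produces a $K_{3,3}$ subgraph contradicting Lemma~\ref{noK33}. You instead try to package the argument as a super-separation and apply Lemma~\ref{separation}. Unfortunately this route has several concrete gaps.

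First, your separation $(A,B) = (N[v]\cup C,\ V(G)\setminus\{v\})$ has $A-B = \{v\}$, and $v$'s neighborhood is only $N(v)$, which is a proper subset of $S = N(v)\cup (C\setminus\{v\})$ whenever $|C|\geq 2$. The bipartite-complement trick in Lemma~\ref{connectivity} makes $G_B$ a minor by contracting a component of $G[A-B]$ onto the chosen vertex $z$; here that component is $\{v\}$, so the contraction is only possible if $z\in N(v)$. But the degree-maximizing $z$ in $\overline{G[S]}$ will generally lie in $C\setminus\{v\}$, in which case $v$ is not adjacent to $z$ (they are on the same side of the bipartition) and $G_B$ is not realized as a minor. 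Second, your phrase ``after reducing $(A,B)$ to a minimal separation of the same form'' is not well-defined: minimization replaces $S$ by some strictly smaller cutset whose shape bears no clean relation to $N(v)$ and $C$, so the inequality $3(d(v)+|C|-1)\geq e_{CN}+2d^*+11$ is no longer what Lemma~\ref{separation} produces. Third, even taking the inequality at face value, the averaging bound on $d^*$ is too weak: for example, $d(v)=4$, $|C|=2$, $e_{CN}=4$ gives $d^*=0$ and $15 \geq 4+0+11$, an equality rather than a contradiction; similarly $d(v)=5$, $|C|=2$, $e_{CN}\in\{3,4,5\}$ all fail to violate the inequality. You flag only $|C|=1$ as problematic and propose a vague ``virtual vertices'' repair, but the same trouble already arises at $|C|=2$, and the repair is not made concrete. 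In short, the super-separation framework is the wrong tool here; the paper's one-shot contraction plus edge count is both simpler and complete.
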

\begin{proof}
Suppose otherwise, and write $c \coloneqq |N(x) \cap N(y)|-1$. That is, $c$ is the number of common neighbors of $x$ and $y$ other than $v$. So $c \leq 6-d(v)$. Without loss of generality suppose that $v \in X_0$. Let $G'$ be the graph formed from $G$ by deleting $y$ and $v$, and adding edges between $x$ and all vertices in $N(y)-N(x)$. We can see that $G'$ is a minor of $G$ by contracting $y$ and $v$ to $x$. Furthermore, $G'$ is bipartite and since $n \geq 7$ by Lemma \ref{noVertices}, the graph $G'$ has at least five vertices. Let $\ell$ be 1 if $G'$ is isomorphic to $K_{3,t}$ for some $t$, and 0 otherwise. Then:
\begin{align*}
3n-9&\leq|E(G)|\\
&=|E(G-v)|+d(v) \\
&= |E(G')|+c+d(v)\\
&\leq 3(n-2)-10+\ell+c+d(v)\\
& =3n-10+\ell+(c-6+d(v)).
\end{align*}
It follows that $\ell= 1$ and $c = 6-d(v)$. Thus, $G'$ is isomorphic to $K_{3,t}$ for some $t$. If $d(v) = 4$, then $c=2$ and $G[N(v)\cup (N(x)\cap N(y))]$ is isomorphic to $K_{3,4}$. This is a contradiction since by Lemma \ref{noK33}, the graph $G$ has no $K_{3,3}$ subgraph. If $d(v)=5$, then $|X_{1}\cap V(G')|\geq d(v)-1=4$, and so $|X_0 \cap V(G')| = 3$. Then $G[(X_0 \cap V(G'))\cup (N(v)-\{x,y\})]$ is isomorphic to $K_{3,3}$, again a contradiction to Lemma \ref{noK33}.
\end{proof}

Now we are ready to show:

\begin{lemma}
\label{no5}
$G$ has no vertex of degree 5.
\end{lemma}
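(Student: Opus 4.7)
The plan is to suppose for contradiction that $G$ has a vertex $v$ of degree~$5$, and to find a forbidden minor. Assume without loss of generality that $v\in X_0$, so that its five neighbors $x_1,\dots,x_5$ all lie in $X_1$ and every common neighbor of two of them lies in $X_0$. By Lemma~\ref{inCommon} applied to $v$, each pair $\{x_i,x_j\}$ has at least two common neighbors in $X_0\setminus\{v\}$, so for each unordered pair $\{i,j\}\subseteq\{1,\dots,5\}$ I may select a vertex $v_{i,j}\in X_0\setminus\{v\}$ adjacent to both $x_i$ and $x_j$.

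The first step is to show that if the ten vertices $v_{i,j}$ can be chosen pairwise distinct, then $G$ has a $K_6$ minor, contradicting Theorem~\ref{thm:main2}. The branch sets are $B_0=\{v\}$ together with $B_i=\{x_i\}\cup\{v_{i,j}: j>i\}$ for $i\in\{1,\dots,5\}$; each $B_i$ is a star at $x_i$, since $v_{i,j}$ is adjacent to $x_i$, and for $i<j$ the vertex $v_{i,j}\in B_i$ is adjacent to $x_j\in B_j$, while $v\in B_0$ is adjacent to each $x_i\in B_i$, supplying all fifteen cross edges.

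Hence no distinct selection exists, and I claim this forces the existence of a vertex $w\in X_0\setminus\{v\}$ with $|N(w)\cap\{x_1,\dots,x_5\}|\ge 3$. If every vertex of $X_0\setminus\{v\}$ had at most two neighbors in $\{x_1,\dots,x_5\}$, then each such vertex would be a common neighbor of at most one of the ten pairs, and since each pair has at least two common neighbors in $X_0\setminus\{v\}$ by Lemma~\ref{inCommon}, the ten pairs would have disjoint pools of at least two candidates, so a distinct selection would exist. So such a $w$ exists; after renumbering assume $w$ is adjacent to $x_1,x_2,x_3$. By Lemma~\ref{inCommon} each of the three pairs in $\{x_1,x_2,x_3\}$ has a second common neighbor $w_{ij}\in X_0\setminus\{v,w\}$, and by Lemma~\ref{noK33} none of these $w_{ij}$ can be adjacent to the remaining vertex of $\{x_1,x_2,x_3\}$, for otherwise $\{v,w,w_{ij}\}$ and $\{x_1,x_2,x_3\}$ would span a $K_{3,3}$ subgraph; this last observation also shows $w_{12},w_{13},w_{23}$ are pairwise distinct.

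The main obstacle will be the ensuing case analysis, which splits according to $|N(w)\cap\{x_1,\dots,x_5\}|\in\{3,4,5\}$, the adjacencies of $w_{12},w_{13},w_{23}$ with $\{x_4,x_5\}$, and the structure of analogous extra common neighbors for the remaining seven pairs. In each case I would exhibit either a $K_{3,3}$ subgraph, contradicting Lemma~\ref{noK33}, or one of the forbidden minors $K_6$, $K_{1,3,3}$, $K_{4,4}^-$, $K_6^{\Delta Y}$, typically by building branch sets in the same star-shaped style as for the $K_6$ argument and using the auxiliary common neighbors to supply the missing cross edges (for instance, with $w$ adjacent to $x_1,\dots,x_5$ one absorbs $w$ into $B_5$ together with common neighbors $v_{i,5}$ of $\{x_i,x_5\}$). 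The delicate part throughout is keeping the chosen vertices distinct, which is done by repeatedly invoking Lemma~\ref{inCommon} to produce a fresh common neighbor whenever a collision would otherwise arise.
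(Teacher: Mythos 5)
Your opening steps are correct and in fact reproduce the paper's own informal outline: if for each pair $x_i,x_j\in N(v)$ one can choose ten pairwise distinct common neighbors $v_{i,j}\ne v$, then the branch sets $\{v\}$, $\{x_i\}\cup\{v_{i,j}:j>i\}$ give a $K_6$ minor, and your disjoint-pools argument (via Lemma~\ref{inCommon}, which gives each pair at least $7-5=2$ common neighbors other than $v$) correctly forces, in the contrary case, a vertex $w\in X_0\setminus\{v\}$ with at least three neighbors in $N(v)$. The observations about $w_{12},w_{13},w_{23}$ being distinct and avoiding the third vertex of $\{x_1,x_2,x_3\}$ via Lemma~\ref{noK33} are also sound.

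However, the proposal stops exactly where the real work begins. The phrase ``In each case I would exhibit either a $K_{3,3}$ subgraph\ldots{} or one of the forbidden minors'' is a promise, not a proof. The case analysis you describe is genuinely delicate: it has to handle $|N(w)\cap N(v)|\in\{3,4,5\}$, the adjacency pattern of $w_{12},w_{13},w_{23}$ to $\{x_4,x_5\}$, coincidences among the common neighbors of the seven pairs involving $x_4$ or $x_5$, and possible collisions with $w$ itself, and in each configuration a suitable minor model must be exhibited while keeping all the branch sets disjoint. None of that is carried out, and it is precisely the hard content of the lemma.

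The paper avoids this combinatorial explosion with a more structured argument. It splits $V(G)\setminus N[v]$ into $W$ (vertices with exactly two neighbors in $N(v)$) and $U_0$ (vertices with at least three), contracts each $w\in W$ into one of its neighbors in $N(v)$ to form a graph $G_0$ in which every non-adjacent pair in $N(v)$ has at least two common neighbors in $U_0$, then proves Claim~\ref{claim:deg5}: after contracting at most two more $U_0$-vertices one reaches a graph $G_1$ and a set $U_1\subseteq U_0$ in which every remaining $U_1$-vertex has degree exactly $3$ and $\delta(G_1[N(v)])\ge1$. Minimizing the surviving set $U_2$ and examining $|N_{G_2}(u)\cap N_{G_2}(u')|$ for two vertices $u,u'\in U_2$ then yields $K_6$, $K_6^{\Delta Y}$, or a $K_{3,3}$ subgraph in three short cases. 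If you want to salvage your approach, you should either actually complete your case analysis or adopt a similar normalization step before enumerating cases; as written there is a genuine gap.
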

\begin{proof}
Suppose $v \in V(G)$ is a vertex of degree 5. Let $$W \coloneqq \{w \in V(G)-N[v]: |N(w) \cap N(v)|= 2\},$$ 
and let $$U_0 \coloneqq \{u \in V(G)-N[v]: |N(u) \cap N(v)|\geq 3\}.$$ We will show a contradiction by proving that $G[N[v]\cup W \cup U_0]$ has a minor isomorphic to $K_6$ or $K_6^{\Delta Y}$. If $U_0 = \emptyset$, then this is immediate since by Lemma 3.8 every pair of vertices in $N(v)$ share at least two common neighbors other than $v$. On the other hand, if $U_0$ has too many vertices with too many neighbors in $N(v)$, then we will find a $K_{3,3}$ subgraph, contradicting Lemma 3.7. The proof proceeds by carefully contracting certain vertices in $W\cup U_0$ to one of their neighbors in $N(v)$. Note that since $G$ is bipartite, if $G'$ is obtained from $G[N[v]\cup W \cup U_0]$ by performing such contractions and then deleting edges with both ends in $N(v)$, then $G'$ is a subgraph of $G$. We will sometimes need this fact to find a $K_{3,3}$ subgraph in $G$.

Now, let $G_0$ be the graph formed from $G[N[v]\cup W \cup U_0]$ by contracting, for every vertex $w \in W$, an arbitrary edge with one end $w$ and the other end in $N(w) \cap N(v)$. 
\xx{Please note that $G_0$ is not necessarily bipartite.}
By Lemma \ref{inCommon}, every pair of vertices in $N(v)$ are either adjacent in $G_0$ or share at least two common neighbors in $U_0$. 

First we show the following claim:
\begin{claim}
\label{claim:deg5}
There exist a set $U_1 \subseteq U_0$ and a graph $G_1$ so that:
\begin{enumerate}
\item The graph $G_1$ is formed from $G_0$ by contracting edges with one end  in $U_0-U_1$ and the other end  in $N(v)$.
\item Every pair of distinct vertices in $N(v)$ are either adjacent in $G_1$ or share a common neighbor in $U_1$.
\item Every vertex in $U_1$ has degree exactly 3 in $G_0$, and $\delta(G_1[N(v)]) \geq 1$. 
\end{enumerate}
\end{claim}
\begin{proof}
Observe that $U_0$ is non-empty since otherwise $G_0[N[v]]$ is isomorphic to $K_6$. Fix a vertex $z \in U_0$ with $d_{G_0}(z)$ maximum. First suppose $d_{G_0}(z)=5$. Then since $G$ has no $K_{3,3}$ subgraph by Lemma \ref{noK33}, every pair of vertices in $N(v)$ are adjacent in $G_0$. Then $G_0[N[v]]$ is isomorphic to $K_6$, a contradiction. So $d_{G_0}(z)\leq 4$.

Now observe that every vertex in $U_0$ other than $z$ has degree exactly 3 in $G_0$. This is clear if $d_{G_0}(z)=3$, and follows since $G$ has no $K_{3,3}$ subgraph if $d_{G_0}(z)= 4$.

Let $x \in N(v)-N_{G_0}(z)$. If $d_{G_0}(z)=3$, let $x'$ be the vertex other than $x$ in $N(v)-N_{G_0}(z)$. If $d_{G_0}(z)=4$, let $x'$ be any vertex in $N(v)$ other than $x$.

First suppose that $x$ and $x'$ are adjacent in $G_0$. Then let $G_1$ be the graph formed from $G_0$ by contracting $z$ to one of its neighbors in $G_0$, and let $U_1 \coloneqq U_0-\{z\}$. Then $G_1$ and $U_1$ satisfy the conditions of the claim.

So we may assume that $x$ and $x'$ are not adjacent in $G_0$. Then they have a common neighbor $z' \in U_0-\{z\}$. Let $G_1$ be the graph formed from $G_0$ by contracting $z$ to a vertex in $N_{G_0}(z)-N_{G_0}(z')$  and $z'$ to $x'$. Write $U_1 \coloneqq U_0-\{z,z'\}$. Then $G_1$ and $U_1$ satisfy the conditions of the claim.
\end{proof}

Fix $G_1$ and $U_1$ as in the claim. Choose a graph $G_2$ and a set $U_2 \subseteq U_1$ so that:
\begin{enumerate}
\item The graph $G_2$ is formed from $G_1$ by contracting edges with one end  in $U_1-U_2$ and the other end  in $N(v)$.
\item Every pair of distinct vertices in $N(v)$ are either adjacent in $G_2$ or share a common neighbor in $U_2$.
\item Subject to the above, $|U_2|$ is minimum.
\end{enumerate}

Such a choice is possible because $G_2 \coloneqq G_1$ and $U_2 \coloneqq U_1$ satisfy (i) and (ii). Observe that $G_2$ is a minor of $G$. We first show that for all $u \in U_2$, the graph $G_2[N_{G_2}(u)]$ has no edges. Since every vertex in $U_1$ has degree exactly 3 in $G_0$ by the claim, $u$ also has degree exactly 3 in $G_2$. Write $N_{G_2}(u) = \{x, x', x''\}$ and suppose $xx' \in E(G_2)$. Then let $G_2'$ be the graph formed from $G_2$ by contracting $u$ to $x''$, and let $U_2' \coloneqq U_2-\{u\}$. Then $G_2'$ and $U_2'$ contradict our choice of $G_2$ and $U_2$.

Then by the last paragraph and condition (ii), if $|U_2| \leq 1$, then $G_2[N[v]\cup U_2]$ is isomorphic to either $K_6$ or $K_6^{\Delta Y}$. So there exist distinct vertices $u, u' \in U_2$. Both $u$ and $u'$ have degree exactly 3 in $G_2$. We proceed by cases.

\begin{flushleft}
\textit{Case:} $|N_{G_2}(u) \cap N_{G_2}(u')|=3$
\end{flushleft}

Then $G[N_{G_2}(u)\cup \{v,u,u'\}]$ is isomorphic to $K_{3,3}$, a contradiction to Lemma \ref{noK33}. 

\begin{flushleft}
\textit{Case:} $|N_{G_2}(u) \cap N_{G_2}(u')|=2$
\end{flushleft}
Then let $x$ be the unique vertex in $N_{G_2}(u)-N_{G_2}(u')$. Let $G_2'$ be the graph formed from $G_2$ by contracting $u$ to $x$, and let $U_2' \coloneqq U_2-\{u\}$. Then $G_2'$ and $U_2'$ contradict part (iii) of our choice of $G_2$ and $U_2$.

\begin{flushleft}
\textit{Case:} $|N_{G_2}(u) \cap N_{G_2}(u')|=1$
\end{flushleft}
Let $x$ be the unique vertex in $N_{G_2}(u) \cap N_{G_2}(u')$. Then $x$ is adjacent to no vertices in $N(v)$ in the graph $G_2$. But this is a contradiction since $\delta(G_2[N(v)])\geq \delta(G_1[N(v)])\geq 1$ by part (iii) of Claim \ref{claim:deg5}. This is the final case and completes the proof of Lemma \ref{no5}.
\end{proof}

\section{Proof of Main Theorem: Vertices of Degree 4}
\label{sec:proof2}

Now that we have shown $G$ has no vertices of degree 5 and that $4 \leq \delta(G) \leq 5$ by Lemma \ref{minDegree}, the remainder of the proof deals with vertices of degree 4. First we will show that if $v$ is any vertex of degree 4, then $G$ has no vertex $u$ such that $|N(u) \cap N(v)|\geq 4$. We then use this fact to show that $G$ does not have additional kinds of separations of small order. Finally we fix a vertex $v$ of degree 4 and a certain set $U \subseteq V(G)-N[v]$ of three or fewer vertices that each have neighbors in $N(v)$. We show that $G-(N[v]\cup U)$ is connected and has a cut vertex $a$. We then use the fact that $G-(N[v]\cup U\cup \{a\})$ is disconnected to find a separation showing a contradiction to Lemma \ref{separation}.

\begin{lemma}
\label{no44}
Suppose $v \in V(G)$ is a vertex of degree 4. Then there does not exist a vertex $u \in V(G)-N[v]$ so that $|N(u)\cap N(v)|=4$.
\end{lemma}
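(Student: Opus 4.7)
The plan is to suppose, for contradiction, that such a vertex $u$ exists, show that $u$ must be a neighborhood twin of $v$, use Lemmas~\ref{noK33} and~\ref{inCommon} to force a rich common-neighbor structure, and then apply Lemma~\ref{separation} to a super-separation built from the resulting $K_{4,4}$-like structure.

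Since $|N(u) \cap N(v)| = 4 = d(v)$, we have $N(v) \subseteq N(u)$. By Lemma~\ref{minDegree}, $d(u) \geq 4$, and by Lemma~\ref{no5}, $d(u) \neq 5$, so $d(u) = 4$ and $N(u) = N(v) =: \{x_1, x_2, x_3, x_4\}$. Put $v \in X_0$; then $u \in X_0$ and $\{x_1, \ldots, x_4\} \subseteq X_1$. By Lemma~\ref{noK33}, no $w \in X_0 \setminus \{u, v\}$ can have $|N(w) \cap N(v)| \geq 3$, since $\{u, v, w\}$ together with three of its common neighbors with $v$ would span a $K_{3,3}$ subgraph. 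By Lemma~\ref{inCommon} applied to $v$, every pair $\{x_i, x_j\} \subseteq N(v)$ has at least three common neighbors besides $v$; one of them is $u$, so at least two lie in $X_0 \setminus \{u, v\}$ and, by the previous sentence, satisfy $N(w) \cap N(v) = \{x_i, x_j\}$ exactly. Setting $C_{ij} := \{w \in X_0 \setminus \{u, v\} : N(w) \cap N(v) = \{x_i, x_j\}\}$, the six sets $C_{ij}$ are pairwise disjoint with $|C_{ij}| \geq 2$; in particular $|X_0 \setminus \{u, v\}| \geq 12$ and each $x_i$ has at least $6$ neighbors in $X_0 \setminus \{u, v\}$.

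I would then apply Lemma~\ref{separation} to the super-separation built from $A = \{u, v\} \cup N(v)$, $B = V(G) \setminus \{u, v\}$, where $G_A$ is obtained from $G[A] = K_{2,4}$ by adjoining two new vertices each joined to all of $\{x_1, \ldots, x_4\}$, so that $G_A \cong K_{4,4}$, and $G_B := G - \{u, v\}$. The order is $k = 6$, and the edge deficit is $e = 16 + (|E(G)| - 8) - |E(G)| = 8$. Neither $G_A$ nor $G_B$ is isomorphic to any $K_{3, t}$ (for $G_B$ this follows because $|X_0 \setminus \{u, v\}| \geq 12$ forces the smaller side of its bipartition to be $X_1$, which has size $\geq 4$), so Lemma~\ref{separation} demands $18 = 3k \geq e + 11 = 19$, a contradiction. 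Realizing $G_A \cong K_{4,4}$ as a minor of $G$ requires two vertex-disjoint connected branch sets $B_3, B_4 \subseteq V(G) \setminus (N[v] \cup \{u\})$ each having a neighbor in every $x_i$; pairing opposite $C$-sets, one takes $w_3 \in C_{12}$, $w_3' \in C_{34}$ and links them via a short path through $X_1 \setminus N(v)$ to form $B_3$, then takes $w_4 \in C_{12} \setminus \{w_3\}$, $w_4' \in C_{34} \setminus \{w_3'\}$ (possible since $|C_{12}|, |C_{34}| \geq 2$) and builds a disjoint $B_4$ analogously.

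The main obstacle is verifying that $G_A$ is a minor of $G$, i.e., that these disjoint connected branch sets $B_3, B_4$ can always be produced. This relies on the connectivity of $G - \{u, v, x_1, \ldots, x_4\}$: each $x_i$ has at least $6$ neighbors in $X_0 \setminus \{u, v\}$, and the $C_{ij}$-representatives extend via their further neighbors in $X_1 \setminus N(v)$ to supply the connector paths. In the exceptional case where the disjoint branch sets cannot be built, the obstructing vertex cut should package into a non-trivial bipartite super-separation of $G$ of strictly smaller order satisfying the non-$K_{3,t}$ hypothesis of Lemma~\ref{separation} (using again $|X_0 \setminus \{u, v\}| \geq 12$), and Lemma~\ref{separation} once more yields a contradiction.
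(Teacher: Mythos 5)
Your approach is genuinely different from the paper's — they locate a $K_6$ or $K_6^{\Delta Y}$ minor by examining a component of $G-V(H)$ adjacent to $v_4$, whereas you try to build a $K_{4,4}$ minor on $\{u,v\}\cup N(v)$ plus two new branch sets and then invoke Lemma~\ref{separation} — but the proposal has two concrete problems.

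First, the inference ``$d(u)\ge4$ and $d(u)\neq5$, so $d(u)=4$'' is invalid. Lemma~\ref{no5} excludes degree~$5$ only; nothing prevents $d(u)\ge6$, and in fact any counterexample has vertices of large degree. This matters because your super-separation $(G_A, G-\{u,v\})$ only covers $E(G)$ if every edge at $u$ goes into $N(v)$: an edge $uy$ with $y\notin N(v)$ lies in neither $E(G_A)$ nor $E(G-\{u,v\})$, so $(G_A,G_B)$ would fail the definition of a super-separation. (This particular defect is repairable: take $G_B=G-v$ instead, which gives order $7$ and edge excess $12$, and $21\ge23$ still fails; but as written the argument is wrong.)

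Second, and more seriously, the crux of your plan — exhibiting $K_{4,4}$ as a minor, i.e.\ producing two disjoint connected branch sets $B_3,B_4\subseteq V(G)-(N[v]\cup\{u\})$ each adjacent to all of $x_1,\dots,x_4$ — is not actually carried out. The ``pair opposite $C$-sets and link them by a short path through $X_1\setminus N(v)$'' step presupposes connectivity that has not been established, and the stated fallback (``the obstructing vertex cut should package into a non-trivial bipartite super-separation of smaller order'') is not developed: a Menger cut separating $C_{12}$ from $C_{34}$ in $G-(N[v]\cup\{u\})$ lives inside $V(G)-(N[v]\cup\{u\})$ and, together with $N(v)\cup\{u\}$, gives a separation whose side sizes on $X_0$ and $X_1$ need not satisfy the hypotheses of Lemma~\ref{connectivity} or~\ref{connectivity2}, nor is an edge-count against Lemma~\ref{separation} immediate. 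Until that case is made precise, the lemma is not proved. For contrast, the paper sidesteps this connectivity difficulty entirely by choosing three specific common neighbors $u_{1,2},u_{1,3},u_{2,3}$ of pairs in $\{v_1,v_2,v_3\}$, using $\delta(G)\ge4$ to find a component $C$ of $G-V(H)$ attached to $v_4$, applying Lemma~\ref{connectivity} to show $N(C)$ must contain either all of $N(v)$ or all of $U\cup\{u\}$, and then contracting to obtain $K_6^{\Delta Y}$ or $K_6$ directly.
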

\begin{proof}
Suppose otherwise. Without loss of generality assume that $v \in X_0$. Write $N(v) = \{v_1, v_2, v_3, v_4\}$. For every $i,j \in \{1,2,3\}$ with $i < j$, let $u_{i,j} \in V(G)-\{v,u\}$ be a vertex that is adjacent to both $v_i$ and $v_j$. Such vertices exist since by Lemma \ref{inCommon}, $v_i$ and $v_j$ have at least three common neighbors other than $v$. Since $G$ has no $K_{3,3}$ subgraph by Lemma \ref{noK33} and $|N(u)\cap N(v)|=4$, the vertices $u_{1,2}$, $u_{1,3}$, and $u_{2,3}$ are distinct. Write $U \coloneqq \{u_{1,2}, u_{1,3}, u_{2,3}\}$, and $H \coloneqq G[N[v]\cup U\cup \{u\}]$. Then $d_H(v_4)\leq 2$ since $G$ has no $K_{3,3}$ subgraph. So since $\delta(G) \geq 4$ by Lemma \ref{minDegree}, there exists a component of $G-V(H)$ with neighbor $v_4$. Let $C$ be the vertex set of such a component. Observe that $N(C) \subseteq N(v) \cup U \cup \{u\}$.

Now we show that either $N(v) \subseteq N(C)$ or $U \cup \{u\} \subseteq N(C)$. Suppose otherwise. Then for all $i \in \{0,1\}$, we have $|X_i\cap N(C)| \leq 3$. Then by Lemma \ref{connectivity} applied to the separation $(C \cup N(C), V(G)-C)$, it follows that $|N(C)| = 6$ and $\Delta(G[N(C)]) \leq 1$. Then since $|N(C) \cap N(v)|=3$ and $|N(u) \cap N(v)|=4$, we have $u \notin N(C)$. Then $U \subseteq N(C)$. But $|N(C) \cap \{v_1, v_2, v_3\}| \geq 2$, which is a contradiction since $\Delta(G[N(C)])\leq 1$. We have shown that either $N(v) \subseteq N(C)$ or $U\cup \{u\} \subseteq N(C)$.

If $N(v) \subseteq N(C)$, let $G'$ be the graph formed from $G$ by contracting $C$ to a single vertex with neighborhood $N(v)$ and deleting all other vertices in $G-V(H)$. Let $G''$  be the graph formed from $G'$ by contracting $u_{1,2}$ to $v_1$, $u_{2,3}$ to $v_2$, and $u_{1,3}$ to $v_3$. Then $G''$ is isomorphic to $K_6^{\Delta Y}$, a contradiction.

So we may assume that $U \cup \{u\}\subseteq N(C)$. Remember also that by the choice of $C$, we have $v_4 \in N(C)$. Now let $G'$ be the graph formed by contracting $C$ to a vertex with neighborhood $U \cup \{u,v_4\}$ and deleting all other vertices in $G-V(H)$. Let $G''$ be the graph formed from $G'$ by contracting $v$ to $v_4$ and by contracting $u_{1,2}$ to $v_1$, $u_{2,3}$ to $v_2$, and $u_{1,3}$ to $v_3$. Then $G''$ is isomorphic to $K_6$, a contradiction.
\end{proof}

We are now ready to show that $G$ does not have additional kinds of separations of small order.
\begin{lemma}
\label{connectivity2}
Let $(A, B)$ be a non-trivial separation of $G$. If there exists $i \in \{0,1\}$ such that $|X_i \cap A \cap B| \leq 4$ and $|X_{1-i}\cap A \cap B| \leq 2$, then either $|A-B| = 1$ or $|B-A| = 1$.
\end{lemma}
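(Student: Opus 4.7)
My plan is to mimic the proof of Lemma \ref{connectivity}. Suppose for contradiction that both $|A-B|\geq 2$ and $|B-A|\geq 2$, and without loss of generality take $i=0$, so that $s_0:=|X_0\cap A\cap B|\leq 4$ and $s_1:=|X_1\cap A\cap B|\leq 2$. I would pass to a minimal such non-trivial separation $(A,B)$; the usual minimality argument then implies that every component of $G[A-B]$ and every component of $G[B-A]$ has neighborhood equal to $S:=A\cap B$. Using $\delta(G)\geq 4$ together with $s_1\leq 2<4$, one verifies that $|X_0\cap A|,|X_0\cap B|\geq 4$ and hence that $|V(G[A])|,|V(G[B])|\geq 5$, which is what is needed to invoke Lemma \ref{separation}.

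Next I would form the standard super-separation $(G_A,G_B)$ as in Lemma \ref{connectivity}, with $z\in S$ maximizing $d_{\overline{G[S]}}(z)$ and the edges from $z$ to $N_{\overline{G[S]}}(z)$ added to each of $G[A]$ and $G[B]$. When both $X_0\cap(A-B)$ and $X_0\cap(B-A)$ are non-empty, both sides of the bipartitions of $G_A$ and $G_B$ have size at least $4$, so neither piece is isomorphic to any $K_{3,t}$, and Lemma \ref{separation} yields $3|S|\geq|E(G[S])|+2d_{\overline{G[S]}}(z)+11$. From $|X_1\cap S|\leq 2$ and the maximality of $z$ one obtains $2d_{\overline{G[S]}}(z)\geq\sum_{w\in X_1\cap S}d_{\overline{G[S]}}(w)=s_0s_1-|E(G[S])|$, hence $3|S|\geq s_0s_1+11$. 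A short case check over the pairs $(s_0,s_1)$ with $s_0\leq 4$ and $s_1\leq 2$ shows that this inequality is violated (yielding the desired contradiction) in every case except two residual ones: $(s_0,s_1)=(4,0)$ with $|S|=4$, and $(s_0,s_1)=(4,1)$ with $|S|=5$ and $G[S]\cong K_{4,1}$.

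The main obstacle is disposing of these residual cases and of the subcases where $X_0\cap(A-B)$ or $X_0\cap(B-A)$ is empty. In the latter situation (which forces $s_0=4$ by the degree condition), $|A-B|\geq 2$ provides two distinct vertices $u_1,u_2\in X_1\cap(A-B)$ each with $N(u_i)=X_0\cap S$, so $d(u_1)=4$, $u_2\notin N[u_1]$, and $|N(u_1)\cap N(u_2)|=4$, contradicting Lemma \ref{no44}. Hence one may assume both $X_0\cap(A-B)$ and $X_0\cap(B-A)$ are non-empty. For each of the two residual cases I would form an augmented super-separation by attaching to each of $G[A]$ and $G[B]$ a new vertex in $X_1$ with neighborhood $X_0\cap S$, realized as the contraction of a component of $G[B-A]$ (respectively $G[A-B]$) followed by the deletion of the edge (if any) to the vertex of $X_1\cap S$. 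Since we are no longer in the Lemma-\ref{no44} subcase, both augmented pieces are bipartite proper minors of $G$ with both parts of their bipartitions of size at least $5$, so not isomorphic to any $K_{3,t}$. This augmented super-separation has order $|S|+2$ and edge excess $|E(G[S])|+8$, so Lemma \ref{separation} requires $3(|S|+2)\geq|E(G[S])|+8+11$, which fails in both residual cases ($18<19$ for $(s_0,s_1)=(4,0)$ and $21<23$ for $(s_0,s_1)=(4,1)$), yielding the desired contradiction.
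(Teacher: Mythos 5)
Your proof is correct and follows essentially the same strategy as the paper's: derive $|X_0\cap S|=4$, use the super-separation with $z$ maximizing $d_{\overline{G[S]}}$ over $S$ to rule out most profiles $(s_0,s_1)$, and for the remaining two (which coincide exactly with the paper's first case) attach a new vertex with neighborhood $X_0\cap S$, giving order $|S|+2$ and edge excess $|E(G[S])|+8$, then apply Lemma~\ref{separation}. Your $3|S|\geq s_0s_1+11$ shortcut and the early disposal of the subcase $X_0\cap(A-B)=\emptyset$ (rather than verifying ``not $K_{3,t}$'' inside the augmented construction as the paper does) are minor reorganizations, not a different route. One imprecision worth flagging: the claim that minimality forces \emph{every} component of $G[A-B]$ and $G[B-A]$ to have neighborhood $S$ is false when $s_1\geq 1$ and single-vertex components exist, since such a component $\{c\}$ necessarily has $c\in X_1$ and $N(c)=X_0\cap S\subsetneq S$. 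What the minimality argument actually gives (and what both your constructions need) is that every component has neighborhood containing $X_0\cap S$, and that \emph{some} component has neighborhood equal to $S$; the latter holds because either some component has at least two vertices (hence neighborhood $S$ by minimality) or all components are single vertices, in which case two of them violate Lemma~\ref{no44}. With this correction your proof goes through.
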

\begin{proof}
Suppose otherwise. Let $(A,B)$ be a separation of minimum order that violates the lemma. Write $S \coloneqq A \cap B$ for convenience. Without loss of generality we assume that $|X_0 \cap S| \leq 4$ and $|X_{1}\cap S| \leq 2$. By Lemma \ref{connectivity}, we have $|X_0 \cap S|=4$.

First we will show that there exists a component of $G[A-B]$ with neighborhood $S$. Suppose otherwise. Let $C$ be the vertex set of any component of $G[A-B]$. If $|C|\geq 2$ and $N(C) \neq S$, then $(C \cup N(C), V(G)-C)$ is a separation violating the lemma of smaller order, a contradiction to the choice of $(A, B)$. So $|C|=1$ and thus since $\delta(G) \geq 4$ by Lemma \ref{minDegree}, we have $N(C) = X_0 \cap S$. Then since $|A-B| \geq 2$, the graph $G[A-B]$ has another component with vertex set $C'$ also consisting of a single vertex with neighborhood $X_0 \cap S$. But this is a contradiction to Lemma \ref{no44}. This shows that there exists a component of $G[A-B]$ with neighborhood $S$. By symmetry the same holds for $G[B-A]$. We now proceed by cases.

\begin{flushleft}
\textit{Case}: Either $|S|=4$, or $|S|=5$ and $|E(G[S])|=4$
\end{flushleft}
We will construct a super-separation $(G_A, G_B)$ that contradicts Lemma \ref{connectivity}. This will be the first time that $V(G_A)\not\subseteq A$.

Let $G_A$ be the graph formed from $G[A]$ by adding a single vertex, call it $a$, with neighborhood $X_0 \cap S$. We can see that $G_A$ is a minor of $G$ by contracting a component of $G[B-A]$ with neighborhood $S$ to a single vertex. Furthermore, $G_A$ is bipartite, has fewer vertices than $G$ since by assumption $|B-A|\geq 2$, and has at least five vertices since $A \subseteq V(G_A)$. Define $G_B$ analogously, by adding a single vertex with neighborhood $S \cap X_0$ to $G[B]$.

Suppose $G_A$ is isomorphic to $K_{3,t}$ for some $t$. Then since $N_{G_A}(a) = S \cap X_0$ and $|S\cap X_0|=4$, there exist two vertices $u$ and $v$ in $G_A-N_{G_A}[a]$ with degree exactly 4 in $G_A$. Then since $|S|\leq 5$, we have $|S-N_{G_A}[a]|\leq 1$, so at least one of the vertices, say $v$, is in $A-S$. Then $v$ has degree 4 in $G$, and $|N(u)\cap N(v)|=4$, a contradiction to Lemma \ref{no44}. By symmetry, we have shown that neither $G_A$ nor $G_B$ is isomorphic to $K_{3,t}$ for any $t$. Furthermore, the order of the super-separation $(G_A, G_B)$ is $|S|+2$, and 
\begin{align*}
|E(G_A)|+|E(G_B)| -|E(G)| &= |E(G[A])|+|E(G[B])|+8 -|E(G)| = |E(G[S])|+8.
\end{align*}
So by Lemma \ref{separation} applied to the super-separation $(G_A, G_B)$, we have $3(|S|+2)\geq |E(G[S])|+19$. But this is a contradiction since either $|S| \leq 4$, or $|S|=5$ and $|E(G[S])|=4$.

\begin{flushleft}
\textit{Case}: Either $|S|=5$ and $|E(G[S])|\leq 3$, or $|S|=6$
\end{flushleft}

This case is similar to the proof of Lemma \ref{connectivity}. Let $z \in X_1 \cap S$ so that $d_{\overline{G[S]}}(z)$ is maximum. Let $G_A$ be the graph formed from $G[A]$ by adding edges between $z$ and every vertex in $N_{\overline{G[S]}}(z)$. We can see that $G_A$ is a minor of $G$ by contracting a component of $G[B-A]$ with neighborhood $S$ to $z$. The graph $G_A$ is bipartite and has fewer vertices than $G$. Since there is a component of $G[A-B]$ with neighborhood $S$ and both $X_0 \cap S$ and $X_1 \cap S$ are non-empty, both $X_0 \cap (A-B)$ and $X_1 \cap (A-B)$ are also non-empty. Thus, since $\delta(G) \geq 4$, $A$ has at least four vertices in each side of the bipartition of $G$. So $G_A$ is not isomorphic to $K_{3,t}$ for any $t$. Define $G_B$ analogously, by adding edges between $z$ and every vertex in $N_{\overline{G[S]}}(z)$ to $G[B]$. By symmetry, neither $G_A$ nor $G_B$ is isomorphic to $K_{3,t}$ for any $t$.

The order of the super-separation $(G_A, G_B)$ is $|S|$, and
\begin{align*}
|E(G_A)|+|E(G_B)|-|E(G)|&=|E(G[A])|+|E(G[B])|+2d_{\overline{G[S]}}(z) -|E(G)|
\\&= |E(G[S])|+2d_{\overline{G[S]}}(z).
\end{align*}Then by Lemma \ref{separation}, $3|S|\geq |E(G[S])|+2d_{\overline{G[S]}}(z) + 11$. Observe that $$4|X_1\cap S| = \sum_{x \in X_1 \cap S}\left( d_{G[S]}(x)+ d_{\overline{G[S]}}(x)\right).$$So if $|S|=5$ and $|E(G[S])| \leq 3$, then $d_{\overline{G[S]}}(z)\geq 1$ and so $|E(G[S])|+2d_{\overline{G[S]}}(z)\geq 5$. This is a contradiction. So $|S|=6$. But then $|E(G[S])|+2d_{\overline{G[S]}}(z)\geq 8$, which is again a contradiction.
\end{proof}

By Lemmas \ref{minDegree} and \ref{no5}, the graph $G$ has a vertex of degree 4. Fix $v \in V(G)$ a vertex of degree 4, and write $N(v)=\{v_1, v_2, v_3, v_4\}$. Without loss of generality assume that $v \in X_0$. Choose a set $U \subseteq V(G)-N[v]$ of minimum cardinality such that either:
\begin{enumerate}

\item $U$ consists of a single vertex $u$ with $|N(u)\cap N(v)|=3$, or 
\item $U = \{u_{1,2}, u_{1,3}, u_{2,3}\}$ and for all $i,j \in \{1,2,3\}$ with $i<j$, $N(u_{i,j})\cap N(v) = \{v_i, v_j\}$.
\end{enumerate}
First we show that such a set exists. If there exists a vertex $u \in V(G)-N[v]$ such that $|N(u)\cap N(v)|\geq 3$, then by Lemma \ref{no44} in fact $|N(u)\cap N(v)|= 3$ and we are done. So we may assume that for all $u \in V(G)-N[v]$ we have $|N(u)\cap N(v)|\leq 2$. Then for all $i,j \in \{1,2,3\}$ with $i<j$, let $u_{i,j}$ be a vertex not in $N[v]$ that is adjacent to both $v_i$ and $v_j$. Such a vertex exists since $v_i$ and $v_j$ have at least three common neighbors other than $v$ by Lemma \ref{inCommon}. By assumption $u_{1,2}$, $u_{1,3}$, and $u_{2,3}$ are distinct and $N(u_{i,j})\cap N(v) = \{v_i,v_j\}$. So such a set exists. 

Write $H \coloneqq G[N[v]\cup U]$. Next we show one short lemma.

\begin{lemma}
\label{minor}
There do not exist disjoint sets $A, B \subseteq V(G)-V(H)$ such that $G[A]$ and $G[B]$ are connected, and $N(v) \subseteq N(A)$ and $N(v) \subseteq N(B)$.
\end{lemma}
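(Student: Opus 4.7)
The plan is to assume for contradiction that such disjoint $A, B$ exist and then exhibit one of the forbidden minors $K_{4,4}^-$ or $K_6^{\Delta Y}$ in $G$. Since $G[A]$ and $G[B]$ are connected and $A, B \subseteq V(G)-V(H)$ are disjoint from each other and from $V(H)$, I contract each to a single vertex, obtaining in a minor of $G$ two new vertices $a$ and $b$ with $N(v) = \{v_1, v_2, v_3, v_4\}$ contained in each of $N(a)$ and $N(b)$. Note also that $v \in X_0$ has no neighbor in $A \cup B$, so the edges $va$ and $vb$ are absent from the minor. The two possible forms of $U$ will then be treated separately.

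In case (i), $U = \{u\}$ with (without loss of generality) $N(u) \cap N(v) = \{v_1, v_2, v_3\}$. Then in the minor each of $v, u, a, b$ is adjacent to all of $v_1, v_2, v_3$, while $v, a, b$ are additionally adjacent to $v_4$; the only missing edge between $\{v_1, v_2, v_3, v_4\}$ and $\{v, u, a, b\}$ is $uv_4$. Hence the minor contains $K_{4,4}^-$, a contradiction. In case (ii), $U = \{u_{1,2}, u_{1,3}, u_{2,3}\}$, and I perform the further contractions $u_{1,2} \to v_1$, $u_{2,3} \to v_2$, $u_{1,3} \to v_3$ (each valid since $u_{i,j}$ is adjacent to $v_i$), producing vertices $v_1', v_2', v_3'$. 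The remaining edges $u_{1,2}v_2$, $u_{2,3}v_3$, $u_{1,3}v_1$ then make $\{v_1', v_2', v_3'\}$ a triangle in the minor, while each $v_i'$ retains its adjacency to $v, a, b$ inherited from $v_i$.

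On the seven-vertex set $\{v, v_1', v_2', v_3', v_4, a, b\}$, I claim that after deleting the edge $ab$ if present, the subgraph is isomorphic to $K_6^{\Delta Y}$, with $v_4$ as the degree-$3$ apex, $\{v, a, b\}$ as the (now independent) former triangle, and $\{v_1', v_2', v_3'\}$ as the three mutually adjacent vertices forming the companion triangle. The main obstacle is to verify that the critical non-edges $v_4 v_i'$, $va$, $vb$ are preserved under the contractions: the first three hold because $v_1, v_2, v_3, v_4$ all lie in $X_1$ and no $u_{i,j}$ is adjacent to $v_4$ by the defining property $N(u_{i,j}) \cap N(v) = \{v_i, v_j\}$ of case (ii); the latter two follow because $N(v) \subseteq V(H)$ is disjoint from $A \cup B$. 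The resulting $K_6^{\Delta Y}$ minor contradicts the hypothesis on $G$ and completes the proof.
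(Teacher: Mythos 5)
Your proof is correct and follows essentially the same strategy as the paper: contract $A$ and $B$ to single vertices $a$ and $b$, then extract $K_{4,4}^-$ directly when $|U|=1$ and obtain $K_6^{\Delta Y}$ by further contracting each $u_{i,j}$ into a neighbor $v_i$ when $|U|=3$. The only cosmetic difference is that the paper contracts $A$ and $B$ to vertices with neighborhood exactly $N(v)$ (implicitly deleting stray edges such as $ab$ or $au_{i,j}$), whereas you retain those edges and observe they either get absorbed by the later contractions or can simply be deleted, which is equally valid.
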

\begin{proof}
Let $G'$ be the graph obtained from $G$ by contracting $A$ to a single vertex with neighborhood $N(v)$, contracting $B$ to a single vertex with neighborhood $N(v)$, and deleting all other vertices in $G-V(H)$. 

If $|U|=1$, then $G'$ is isomorphic to $K_{4,4}^-$, a contradiction. If $|U|=3$, then let $G''$ be the graph formed from $G'$ by contracting $u_{i,j}$ to $v_i$ for all $i,j \in \{1,2,3\}$ with $i<j$. Then $G''$ is isomorphic to $K_6^{\Delta Y}$, a contradiction.
\end{proof}

In the next lemma we show that $G-V(H)$ is connected and has a 1-separation satisfying certain properties.

\begin{lemma}
\label{connectedCutVertex}
The graph $G-V(H)$ is connected. Furthermore, there exist $\{a_0, a_0', a_1, a_1'\} \subseteq V(G)-V(H)$ and a 1-separation $(A_0, A_1)$ of $G-V(H)$ such that for every $i \in \{0,1\}$, we have $a_i, a_i' \in A_i$ and $a_i$ and $a_i'$ are both adjacent to $v_{2i+1}$ and $v_{2i+2}$.
\end{lemma}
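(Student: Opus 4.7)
Proof plan. The proof comes in two parts: showing $G - V(H)$ is connected, then exhibiting the required $1$-separation inside $G - V(H)$.

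For connectedness, assume otherwise. First, no component of $G - V(H)$ is a single vertex $w$: indeed, $N(w) \subseteq V(H)$ and $\delta(G) \ge 4$, but if $w \in X_0$ then $N(w) = V(H) \cap X_1 = N(v)$, contradicting Lemma~\ref{no44}, while if $w \in X_1$ then $N(w) \subseteq V(H) \cap X_0 \setminus \{v\} = U$ has size at most $3 < \delta(G)$. By Lemma~\ref{minor} at most one component $C^*$ satisfies $N(v) \subseteq N(C^*)$, so we may choose a component $C$ with $|N(C) \cap X_1| \le 3$. Apply the separation $(C \cup N(C), V(G) - C)$ of $G$. When $|N(C) \cap X_1| \le 2$, Lemma~\ref{connectivity2} forces $|C| = 1$ (impossible) or $|V(G) - C - N(C)| = 1$ (impossible, since $V(H) - N(C)$ has at least two vertices and another component contributes at least two more). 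When $|N(C) \cap X_1| = 3$, the inclusion $N(v) \subseteq V(H)$ gives $v \notin N(C)$, hence $N(C) \cap X_0 \subseteq U$ and Lemma~\ref{connectivity} demands $|N(C)| = 6$; this fails outright when $|U|=1$ (since then $|N(C)| \le 4$), and when $|U|=3$ it forces $N(C) \cap X_0 = U$ with $\Delta(G[N(C)]) \le 1$, but any two of $v_1, v_2, v_3$ in $N(C)$ would give the corresponding $u_{i,j} \in U$ degree $2$ in $G[N(C)]$, yielding $|N(C) \cap X_1| \le 2$, contradicting the case.

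For the $1$-separation, let $W_{ij}$ denote the common neighbors of $v_i$ and $v_j$ in $V(G) - V(H)$. By Lemma~\ref{inCommon} the vertices $v_i$ and $v_j$ share at least three common neighbors besides $v$, and at most one of these lies in $V(H) \setminus \{v\}$, so $|W_{ij}| \ge 2$. After relabeling $v_1,\dots,v_4$ if necessary in the $|U|=1$ case, we may assume $v_4$ has no neighbor in $U$, so no common neighbor of $v_3$ and $v_4$ lies in $V(H) \setminus \{v\}$, giving $|W_{34}| \ge 3$. Lemma~\ref{no44} yields $W_{12} \cap W_{34} = \emptyset$, since no vertex of $V(G) - N[v]$ is adjacent to all four $v_i$. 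Now apply Menger's theorem in $G - V(H)$ (which is bipartite and places $W_{12} \cup W_{34}$ on the side $X_0$, so no $W_{12}$-$W_{34}$ edge exists) to the disjoint sets $W_{12}, W_{34}$: let $k$ be the minimum cardinality of a separating set lying in $V(G-V(H)) - W_{12} - W_{34}$. By Part~A, $k \ge 1$. If $k=1$, let $a$ be such a cut vertex, let $A_0$ be the union of the components of $(G-V(H)) - a$ that meet $W_{12}$ together with $\{a\}$, and let $A_1$ be the remainder together with $\{a\}$. Then $(A_0, A_1)$ is a $1$-separation of $G - V(H)$ with $W_{12} \subseteq A_0$ and $W_{34} \subseteq A_1$, and picking any distinct $a_0, a_0' \in W_{12}$ and distinct $a_1, a_1' \in W_{34}$ verifies the lemma.

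The remaining case $k \ge 2$ must be ruled out. Menger supplies two vertex-disjoint $W_{12}$-$W_{34}$ paths $P_1, P_2$ in $G - V(H)$; label their endpoints so that $a_0, a_0' \in W_{12}$ and $a_1, a_1' \in W_{34}$. In the minor obtained by contracting each $V(P_i)$, the resulting vertex is adjacent to all four of $v_1, v_2, v_3, v_4$. When $|U| = 1$, the branch sets $\{v\}, \{u\}, \{v_1\}, \{v_2\}, \{v_3\}, \{v_4\}, V(P_1), V(P_2)$ realize $K_{4,4}^-$ with parts $\{v, u, V(P_1), V(P_2)\}$ and $\{v_1, v_2, v_3, v_4\}$, missing only the edge from $u$ to the $v_i$ not adjacent to $u$, a contradiction. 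When $|U|=3$, the branch sets $\{v, v_4\}, \{v_1, u_{1,2}\}, \{v_2, u_{2,3}\}, \{v_3, u_{1,3}\}, V(P_1), V(P_2)$ are the candidates for $K_6$: all fourteen pairs other than $V(P_1)$-$V(P_2)$ are adjacent via obvious edges supplied by $v$, the $u_{i,j}$'s, or the adjacencies of $a_0, a_0', a_1, a_1'$ to the $v_i$'s. The main technical step is supplying the last edge. If $G$ has no edge between $V(P_1)$ and $V(P_2)$, then connectedness of $G - V(H)$ (Part~A) supplies a shortest path $Q$ in $G-V(H)$ from $V(P_1)$ to $V(P_2)$, whose internal vertices necessarily lie outside $V(P_1) \cup V(P_2)$; augmenting the branch $V(P_1)$ by the internal vertices of $Q$ preserves connectedness and disjointness from the other branches while producing the missing adjacency, yielding a $K_6$ minor and the desired contradiction. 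Hence $k = 1$ and the lemma holds.
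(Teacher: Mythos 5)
Your proof is correct, and the global strategy (establish connectedness of $G-V(H)$, then apply Menger's theorem to a set of vertices joined to $\{v_1,v_2\}$ and a set joined to $\{v_3,v_4\}$) is the same as the paper's. However, the execution diverges in two ways worth noting.

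For connectedness, you split into cases on $|N(C)\cap X_1|\le 2$ versus $=3$ and invoke Lemma~\ref{connectivity2} for the small case. This is unnecessary: the paper simply picks, via Lemma~\ref{minor}, a component $C$ with $N(v)\nsubseteq N(C)$, notes $|N(C)\cap X_0|\le|U|\le 3$ and $|N(C)\cap X_1|\le 3$, and applies Lemma~\ref{connectivity} once. The conclusion $|N(C)|=6$ already forces $|N(C)\cap X_1|=3$ and $N(C)\cap X_0=U$ with $|U|=3$, after which the degree bound $\Delta(G[N(C)])\le 1$ is violated by some $u_{i,j}$. Your case split is sound but duplicates work that a single application of Lemma~\ref{connectivity} handles.

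For the Menger step, the larger difference is that when two vertex-disjoint paths $P_1,P_2$ exist, you re-derive the forbidden-minor contradiction from scratch, building $K_{4,4}^-$ (for $|U|=1$) or $K_6$ (for $|U|=3$) directly, with an extra path-augmentation argument to supply the missing $V(P_1)$-$V(P_2)$ adjacency. But $V(P_1)$ and $V(P_2)$ are exactly the kind of disjoint connected sets $A,B\subseteq V(G)-V(H)$ with $N(v)\subseteq N(A)\cap N(B)$ that Lemma~\ref{minor} forbids, and that lemma was proved immediately before this one precisely for this purpose. Invoking Lemma~\ref{minor} makes the contradiction one line and, importantly, sidesteps the $V(P_1)$-$V(P_2)$ adjacency issue entirely: the paper targets $K_6^{\Delta Y}$ (not $K_6$) in the $|U|=3$ case, and $K_6^{\Delta Y}$ does not require an edge between the two contracted sets. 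Your version works, but it amounts to re-proving Lemma~\ref{minor} in a special case with more machinery than the original proof needed.
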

\begin{proof}
First we will show that $G-V(H)$ is connected. Otherwise, by Lemma \ref{minor}, there exists a component of $G-V(H)$ with vertex set $C$ so that $N(v) \nsubseteq N(C)$. So by Lemma \ref{connectivity} applied to the separation $(C\cup N(C), V(G)-C)$, we find that $|N(C)|=6$ and $\Delta(G[N(C)])\leq 1$. It follows that $|U|=3$ and $U \subseteq N(C)$. This is a contradiction since $|N(C) \cap \{v_1, v_2, v_3\}| \geq 2$ and $\Delta(G[N(C)])\leq 1$. So $G-V(H)$ is connected.

Now for every $i \in \{0,1\}$, let $a_i$ and $a_i'$ be distinct vertices in $V(G)-V(H)$ that are adjacent to both $v_{2i+1}$ and $v_{2i+2}$. Such vertices exist since by Lemma \ref{inCommon} the vertices $v_{2i+1}$ and $v_{2i+2}$ share at least three common neighbors other than $v$, and by the definition of $U$ they share no more than one common neighbor in $U$. Furthermore by Lemma \ref{no44}, in fact $a_0, a_0', a_1, a_1'$ are all distinct. By Menger's Theorem, either the desired 1-separation exists, or $G-V(H)$ contains vertex-disjoint paths $P$ and $P'$ so that both $P$ and $P'$ have one end in $\{a_0, a_0'\}$ and one end in $\{a_1, a_1'\}$. But then by choosing $A \coloneqq V(P)$ and $B \coloneqq V(P')$ we have a contradiction to Lemma \ref{minor}.
\end{proof}

Fix $\{a_0, a_0', a_1, a_1'\} \subseteq V(G)-V(H)$ and a 1-separation $(A_0, A_1)$ of $G-V(H)$ as in the lemma. Let $a$ be the unique vertex in $A_0\cap A_1$, and for convenience write $H' \coloneqq G[V(H) \cup \{a\}]$. Let $C$ be the vertex set of a component of $G-V(H')$ so that $1 \leq |N(C) \cap N(v)|\leq 3$. Subject to this, choose $C$ such that $|N(C)|$ is minimum.

To see that such a component exists, for every $i \in \{0,1\}$, let $C_i$ be the vertex set of a component of $G[A_i-\{a\}]$ with $C_i \cap \{a_i, a_i'\} \neq \emptyset$. Then $G[C_0]$ and $G[C_1]$ are distinct components of $G-V(H')$. By Lemma \ref{minor}, either $N(v) \nsubseteq N(C_0)$ or $N(v) \nsubseteq N(C_1)$. So such a component exists. We first show:

\begin{lemma}
$|U| =3$
\end{lemma}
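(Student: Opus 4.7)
The plan is to prove $|U|=3$ by contradiction: suppose $|U|=1$, so $U=\{u\}$ with $|N(u)\cap N(v)|=3$, and after relabeling $u \sim v_1, v_2, v_3$. The strategy is to apply our separation lemmas to the non-trivial separation $(C\cup N(C), V(G)-C)$, using that $v\in V(G)-C-N(C)$, that $N(C)\subseteq V(H')-\{v\}=\{v_1,v_2,v_3,v_4,u,a\}$, and that $|N(C)\cap N(v)|\leq 3$ by the choice of $C$.

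First I would case-split on the color class of $a$. If $a\in X_0$, then $|N(C)\cap X_0|\leq |\{u,a\}|=2$ and $|N(C)\cap X_1|\leq 3$, and Lemma~\ref{connectivity} forces $|N(C)|=6$, contradicting $|N(C)|\leq 5$. Hence $a\in X_1$, so $|N(C)\cap X_0|\leq 1$ and $|N(C)\cap X_1|\leq 4$. Now Lemma~\ref{connectivity2} yields $|C|=1$ or $|V(G)-C-N(C)|=1$; the latter would force the five $X_1$-vertices of $V(H')-\{v\}$ into $N(C)$, contradicting $|N(C)\cap X_1|\leq 4$. So $C=\{c\}$; parity together with $\delta(G)\geq 4$ forces $c\in X_0$ with $N(c)\subseteq\{v_1,v_2,v_3,v_4,a\}$, and then $|N(c)|\geq 4$ combined with $|N(c)\cap N(v)|\leq 3$ gives $N(c)=\{v_i,v_j,v_k,a\}$ for some three-element subset $\{v_i,v_j,v_k\}$ of $\{v_1,v_2,v_3,v_4\}$.

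If $\{v_i,v_j,v_k\}=\{v_1,v_2,v_3\}$, then $\{v,u,c\}\cup\{v_1,v_2,v_3\}$ induces a $K_{3,3}$ subgraph of $G$, contradicting Lemma~\ref{noK33}. Otherwise $v_4\in\{v_i,v_j,v_k\}$, and after relabeling $N(c)=\{v_1,v_2,v_4,a\}$.

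The main obstacle is producing a contradiction in this last case. My plan is to use that $c$ has degree exactly $4$, so Lemma~\ref{inCommon} applies to pairs of its neighbors and supplies many additional common neighbors in $X_0$; together with the components $C_0, C_1$ of $G-V(H')$ containing $\{a_0,a_0'\}$ and $\{a_1,a_1'\}$ respectively (which are distinct from $\{c\}$ since $a_1,a_1'\sim v_3$ but $c\not\sim v_3$), and with Lemma~\ref{minor} precluding $N(v)$ from being contained in both $N(C_0)$ and $N(C_1)$, I would contract the appropriate $C_i$ together with $\{c,a\}$ into a single branch along the edge $ca$ and package the result with the $v_j$'s to exhibit a $K_{4,4}^-$ or $K_6^{\Delta Y}$ minor, yielding the desired contradiction.
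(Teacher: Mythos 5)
Your opening case analysis is essentially the paper's: you deduce $a\in X_1$ from Lemma~\ref{connectivity}, then use Lemma~\ref{connectivity2} to get $|C|=1$, then use $\delta(G)\ge4$, bipartiteness, and $|N(C)\cap N(v)|\le3$ to pin down $N(c)=\{v_i,v_j,v_k,a\}$, and rule out $\{v_1,v_2,v_3\}$ via a $K_{3,3}$. That is all correct and parallels the paper's ``claim'' plus the observations that $c\in W$, that every $w\in W$ is adjacent to $a$ and to $v_4$. (One small slip: you assert that $C_0$ and $C_1$ are both distinct from $\{c\}$, but your reason only applies to $C_1$. Since $c\sim v_1,v_2$, nothing prevents $c\in\{a_0,a_0'\}$ and hence $C_0=\{c\}$.)

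The real problem is the final step, which is left as a sketch and does not obviously close. Once you contract, say, $C_1\cup\{c,a\}$ to a single vertex $b$, the best you can guarantee is $b\sim v_1,v_2,v_3,v_4$, so the bipartite pattern between $\{v,u,b\}$ and $\{v_1,\dots,v_4\}$ is only $K_{3,4}$ minus an edge; you do not yet have a fourth branch set on the $X_0$ side, and even if you manufacture one from $C_0$, the resulting bipartite graph is $K_{4,4}$ minus \emph{two disjoint} edges (missing $uv_4$ and $cv_3$), which does not contain $K_{4,4}^-$, and there is no indication of how to obtain $K_6^{\Delta Y}$ either. The paper's proof needs genuinely more structure than a single vertex $c$: it shows that there are at least \emph{two} vertices $w,w'$ of degree $4$ with $N(w),N(w')\subseteq N(v)\cup\{a\}$ (via the ``$\ge3$ components'' count together with Lemma~\ref{minor} and the claim), and separately shows there is a \emph{large} component $D$ of $G-(V(H')\cup W)$ with $N(v)\cup\{a\}\subseteq N(D)$. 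Only with $v,u,w,w',D$ and the contractions $w\to v_2$, $u\to v_3$, $v\to v_4$, $w'\to a$ does a $K_6$ emerge. Your plan, as written, lacks the second special vertex $w'$ and the large component $D$, and I do not see how contracting $C_i\cup\{c,a\}$ alone yields a member of the Petersen family. To repair this you would essentially need to re-derive the paper's $|W|\ge2$ and the existence of $D$, at which point you would be reproducing the paper's argument rather than shortcutting it.
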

\begin{proof}
Suppose $|U|=1$. Let $u$ be the unique vertex in $U$. Without loss of generality we may assume that $N(u) \cap N(v) = \{v_1, v_2, v_3\}$. Remember that $v \in X_0$. If $a \in X_0$, then for every $i \in \{0,1\}$, we have $|N(C)\cap X_i| \leq 3$. But $|N(C)|\leq 5$, which is a contradiction to Lemma \ref{connectivity}. Thus $a \in X_1$. We prove the following claim:
\begin{claim}
Let $C'$ be the vertex set of a component of $G-V(H')$ so that $N(v) \nsubseteq N(C')$. Then $C'$ consists of a single vertex of degree 4 that is only adjacent to vertices in $N(v) \cup \{a\}$.
\end{claim}
\begin{proof}
Let $C'$ be the vertex set of such a component. Then $|N(C')\cap X_1|=|N(C')\cap(N(v)\cup \{a\})|\leq 4$ and $|N(C')\cap X_0| = |N(C') \cap U|\leq 1$. Note that $|V(G)-V(C')|\geq |N[v]-V(C')|\geq 2$. Then by Lemma \ref{connectivity2} applied to the separation $(C'\cup N(C'), V(G)-C')$, the set $C'$ consists of a single vertex. Then since $\delta(G)\geq 4$ by Lemma \ref{minDegree}, the vertex in $C'$ is only adjacent to vertices in $N(v)\cup\{a\}$.
\end{proof}

Now define the set
\begin{align*}W \coloneqq \{w \in V(G)-V(H'): d_G(w)=4 \textrm{ and } N(w)\subseteq N(v)\cup \{a\}\}.\end{align*}

Since $G-V(H)$ is connected by Lemma \ref{no44}, every vertex $w \in W$ is adjacent to $a$. Furthermore, every vertex $w \in W$ is adjacent to $v_4$, as otherwise $G[\{v, v_1, v_2, v_3, u, w\}]$ is isomorphic to $K_{3,3}$, a contradiction to Lemma \ref{noK33}.

Now we show that $|W| \geq 2$. By the claim and the choice of $C$, we have $|C|=1$. Since $a \in X_1$ while $\{a_0, a_0', a_1, a_1'\} \subseteq X_0$, for every $i \in \{0,1\}$ we have $|A_i-(\{a\} \cup C)|\geq |\{a_i, a_i'\}-C|\geq 1$. So $G-V(H')$ has at least three components. So by Lemma \ref{minor} and by the claim, $|W| \geq 2$.

Next we show that $G-(V(H')\cup W)$ has a component with vertex set $D$ so that $N(v) \cup \{a\} \subseteq N(D)$. By Lemma \ref{inCommon}, the vertices $v_1$ and $v_2$ have at least three common neighbors besides $v$. Suppose that there are distinct vertices $w, w' \in W$ that are common neighbors of $v_1$ and $v_2$. Then $G[\{v, w, w', v_1, v_2, v_4\}]$ is isomorphic to $K_{3,3}$, a contradiction to Lemma \ref{noK33}. So the vertices $v_1$ and $v_2$ have a common neighbor in $V(G)-(V(H')\cup W)$. So $V(G)-(V(H')\cup W)$ is non-empty. Let $D$ be the vertex set of any component of $G-(V(H')\cup W)$. Since $G-V(H)$ is connected, we have $a \in N(D)$. If $N(v) \nsubseteq N(D)$, then by the claim, $D$ consists of a single vertex of degree 4 that is only adjacent to vertices in $N(v)\cup \{a\}$. But this is a contradiction to the choice of $W$. So $N(v) \cup \{a\} \subseteq N(D)$. 

Let $w$ and $w'$ be distinct vertices in $W$. Since $G$ has no $K_{3,3}$ subgraph, we may assume without loss of generality that $N(w) = \{v_1, v_2, v_4, a\}$ and $N(w') = \{v_1, v_3, v_4, a\}$. Then let $G'$ be the graph formed from $G$ by contracting $D$ to a single component with neighborhood $N(v) \cup \{a\}$ and deleting all other vertices except $V(H') \cup \{w,w'\}$. Then let $G''$ be the graph formed from $G'$ by contracting $w$ to $v_2$, $u$ to $v_3$, $v$ to $v_4$, and $w'$ to $a$. Then $G''$ is isomorphic to $K_6$, a contradiction. This completes the proof of the lemma.
\end{proof}

So $|U|=3$. Write $U = \{u_{1,2}, u_{1,3}, u_{2,3}\}$ so that for all $i,j \in \{1,2,3\}$ with $i<j$, $N(u_{i,j})\cap N(v) = \{v_i, v_j\}$. By the choice of $U$, no vertex other than $v$ is adjacent to three or more vertices in $N(v)$. For convenience write $T \coloneqq N_{G[N(C)]}(a)\cup N_{\overline{G[N(C)]}}(a)$. That is, $T$ is the set of all vertices in $N(C)$ that are in the other side of the bipartition of $G$ from that of the vertex $a$. Let $x$ be some vertex in $N(v)-N(C)$. Such a vertex exists since by the choice of $C$ we have $|N(C) \cap N(v)|\leq 4$.

Now we give an overview of the rest of the proof. The goal is to show a contradiction to Lemma \ref{separation} on super-separations of $G$. Note that since $|N(C) \cap N(v)|\leq 3$, we have $N(C) \subsetneq N(v) \cup U \cup \{a\}$. So $|N(C)|\leq 7$. The previous lemmas on separations of $G$, Lemmas \ref{connectivity} and \ref{connectivity2}, apply only to separations of order six or less, so some casework is required to show a contradiction. We will frequently construct a super-separation $(G_C, G')$ so that $G[C \cup N(C)]$ is a subgraph of $G_C$ and $G-C$ is a subgraph of $G'$.

We first show a straightforward lemma that will help with constructing such super-separations. We are then able to show the harder lemma that $a \in X_1$ and that $G-(V(H')\cup C)$ is connected. Then it is easy to show that $v_4 \notin N(C)$, or else $G$ has a $K_6$ minor. A final lemma shows that certain vertices in $U$ have no neighbor in $V(G)-(V(H')\cup C)$. We then construct one last super-separation of $G$ that gives a contradiction to Lemma \ref{separation}, completing the proof. We begin with the following lemma.

\begin{lemma}
\label{aboutC}
The following hold:
\begin{enumerate}
\item The set $C$ has at least two vertices. Both $C \cup N(C)$ and $V(G)-C$ have at least four vertices in each side of the bipartition of~$G$.
\item Every neighbor of $v$ is adjacent to a vertex in $V(G)-(V(H')\cup C)$.
\item $|N(C) \cap N(v)|=3$ and $|T|=3$
\end{enumerate}
\end{lemma}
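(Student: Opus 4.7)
The plan is to prove (i) first, then (iii), and deduce (ii) from (iii). Throughout, I write $S \coloneqq N(C)$. Since $C$ is a component of $G - V(H')$ and $v$'s neighbors all lie in $V(H)$, we have $S \subseteq V(H') \setminus \{v\} = N(v) \cup U \cup \{a\}$, with $N(v) \subseteq X_1$ and $U \subseteq X_0$ by bipartiteness.

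For the first sentence of (i), I would suppose for contradiction that $C = \{c\}$. By bipartiteness $N(c)$ lies entirely in one part. If $N(c) \subseteq X_0$, then $N(c) \subseteq U \cup (\{a\} \cap X_0)$, giving $N(c) \cap N(v) = \emptyset$ and contradicting $|S \cap N(v)| \geq 1$. Otherwise $N(c) \subseteq N(v) \cup (\{a\} \cap X_1)$, and combining $|N(c)| \geq 4$ with $|N(c) \cap N(v)| \leq 3$ forces $a \in X_1 \cap N(c)$ and $|N(c) \cap N(v)| = 3$; but then $c \in V(G) - N[v]$ has three neighbors in $N(v)$, contradicting the choice of $U$ (since $|U|=3$ is made only when no such vertex exists). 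The second sentence of (i) is then immediate: $G[C]$ has an edge, so $C$ meets both parts, and $\delta(G) \geq 4$ then supplies four vertices of $C \cup N(C)$ in each part, while $\{v\} \cup U \subseteq X_0$ and $N(v) \subseteq X_1$ already witness four vertices of $V(G) - C$ in each part.

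For (iii), I would apply Lemmas~\ref{connectivity} and~\ref{connectivity2} to the non-trivial separation $(C \cup S, V(G) - C)$, case-splitting on $|S \cap N(v)| \in \{1,2,3\}$ and on the part containing $a$. Whenever $|S \cap N(v)| \leq 2$, a contradiction arises: either one part of $S$ has size at most $2$ and the other at most $4$, so Lemma~\ref{connectivity2} forces $|C|=1$ or $|V(G)-C-S|=1$, both impossible since $|C| \geq 2$ and $|V(H') - S| \geq 9 - 6 = 3$; or both parts of $S$ have size at most $3$, so Lemma~\ref{connectivity} forces $|S|=6$ and $\Delta(G[S]) \leq 1$. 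The latter is ruled out because $|S|=6$ forces $U \subseteq S$, and then the fixed adjacencies $N(u_{i,j}) \cap N(v) = \{v_i, v_j\}$ guarantee that some $u_{i,j}$ (or some $v_i$) already has degree at least $2$ in $G[S]$. Hence $|S \cap N(v)| = 3$. For $|T|=3$: if $a \in X_0$ then $T = S \cap N(v)$ has three elements; if $a \in X_1$, the same analysis (now casing on whether $a \in S$) forces both $a \in S$ and $U \subseteq S$, so $T = S \cap X_0 = U$ has size $3$.

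Finally, (ii) follows from (iii). By relabeling I assume $v_4 \notin S$. Then $N(v_4)$ contains no vertex of $U$ (by the definition of $U$) nor of $C$ (since $v_4 \notin S$), so among its $\geq 4$ neighbors at most $v$ and $a$ lie in $V(H') \cup C$, leaving at least two in $V(G) - (V(H') \cup C)$. For $i \in \{1,2,3\}$, Lemma~\ref{inCommon} gives $v_i$ and $v_4$ at least three common neighbors besides $v$; none lie in $U$, and at most one equals $a$, so at least two lie in $V(G) - V(H')$. If both were in $C$ then $v_4 \in S$, a contradiction, so some common neighbor of $v_i$ and $v_4$ lies in $V(G) - (V(H') \cup C)$ and is the required neighbor of $v_i$. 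The main obstacle will be the case analysis in (iii) where $|S|=6$ and $\Delta(G[S]) \leq 1$: one must verify that for every admissible $S \cap N(v)$ of size $2$ or $3$, together with $U \subseteq S$, the fixed edges between $U$ and $N(v)$ always produce a vertex of degree at least~$2$ in $G[S]$; this is finite but slightly finicky bookkeeping that drives the reduction.
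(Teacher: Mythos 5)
Your proofs of (i) and (iii) are correct and follow essentially the same route as the paper: (i) rules out $|C|=1$ by bipartiteness and the choice of $U$, then uses $\delta(G)\ge 4$ and $V(H)\subseteq V(G)-C$; (iii) applies Lemmas~\ref{connectivity} and~\ref{connectivity2} to the separation $(C\cup N(C),\,V(G)-C)$ and kills the $\Delta(G[S])\le 1$ outcome using the fixed adjacencies between $U$ and $\{v_1,v_2,v_3\}$. (A small point: $a\in N(C)$ is automatic because $G-V(H)$ is connected by Lemma~\ref{connectedCutVertex}, so every component of $G-V(H')$ is adjacent to $a$; you needn't treat $a\notin S$ as a live case.)

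However, your proof of (ii) has a genuine gap. You write ``By relabeling I assume $v_4 \notin S$,'' and then use crucially that $N(v_4)\cap U = \emptyset$. But $v_4$ is not an arbitrary neighbor of $v$: once $U=\{u_{1,2},u_{1,3},u_{2,3}\}$ is fixed with $N(u_{i,j})\cap N(v)=\{v_i,v_j\}$, the vertex $v_4$ is \emph{distinguished} as the unique neighbor of $v$ with no neighbor in $U$. The only remaining symmetry is among $\{v_1,v_2,v_3\}$. By (iii) exactly one vertex of $N(v)$ lies outside $N(C)$, but nothing yet forces it to be $v_4$; indeed, the paper proves $v_4\notin N(C)$ only later as Lemma~\ref{v4}, after both Lemma~\ref{aboutC} and Lemma~\ref{C'}. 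If the missing vertex $x$ is, say, $v_1$, then $v_1$ is adjacent to $u_{1,2}$ and $u_{1,3}$, and your count ``none lie in $U$'' fails. The paper's argument avoids this by taking an arbitrary $x\in N(v)-N(C)$ and observing, for any $y\in N(v)-\{x\}$, that $x$ and $y$ share at least three common neighbors other than $v$ (Lemma~\ref{inCommon}) but at most \emph{two} in $U\cup\{a\}$ (at most one $u_{i,j}$ is adjacent to both, plus possibly $a$); the remaining common neighbor lies in $V(G)-V(H')$ and cannot be in $C$ since $x\notin N(C)$. This covers every $y$, and $x$ itself, without needing to know which $v_i$ is missing from $N(C)$. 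If you replace your relabeling step with this count, (ii) goes through.
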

\begin{proof}
First we show that $|C|\geq 2$ and $|V(G)-(V(H')\cup C)|\geq 2$. We have $|V(G)-(C\cup N(C))|\geq |N[v]-(C\cup N(C))|\geq 2$. Suppose $|C|=1$. Since $|N(C)\cap N(v)|\geq 1$ by the choice of $C$, it follows that $N(C) \subseteq N(v) \cup \{a\}$. But then since $\delta(G)\geq 4$ by Lemma \ref{minDegree}, we have $|N(C) \cap N(v)|\geq 3$, a contradiction to the choice of $U$.

Next we show (i). The set $V(G)-C$ has at least four vertices in each side of the bipartition of $G$ since $V(H) \subseteq V(G)-C$. Since $|C|\geq 2$ and $G[C]$ is connected, the set $C$ is not contained in one side of the bipartition of $G$. Thus, since $\delta(G)\geq 4$, the set $C \cup N(C)$ has at least four vertices in each side of the bipartition of $G$.

Now we show (ii). Let $y$ be any vertex in $N(v)-\{x\}$. By Lemma \ref{inCommon}, the vertices $x$ and $y$ share at least three common neighbors other than $v$. By the choice of $U$, they share no more than two common neighbors in $U \cup \{a\}$. So $x$ and $y$ have a common neighbor in $V(G)-(V(H')\cup C)$.

Finally we show (iii). If $a \in X_0$ and $|N(C) \cap N(v)|\leq 2$, then this is a contradiction to Lemma \ref{connectivity2} applied to the separation $(C \cup N(C), V(G)-C)$. If $a \in X_1$ and $|N(C) \cap N(v)|\leq 2$, then by Lemma \ref{connectivity}, we have $|N(C)|=6$ and $\Delta(G[N(C)])\leq 1$. This is a contradiction since then $U \subseteq N(C)$ and $|N(C)\cap N(v)|=2$, but every vertex in $\{v_1, v_2, v_3\}$ has two neighbors in $U$.

If $a \in X_0$, then $T = N(C) \cap N(v)$ and so $|T|=3$ by the last paragraph. If $a \in X_1$ and $|T|\leq 2$, then this is a contradiction to Lemma \ref{connectivity2} applied to the separation $(C \cup N(C), V(G)-C)$.
\end{proof}

Next we show the following lemma.

\begin{lemma}
\label{C'}
$a \in X_1$ and $G-(V(H')\cup C)$ is connected.
\end{lemma}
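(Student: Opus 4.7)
The plan is to prove both conclusions by contradiction: supposing one of them fails, I would construct a bipartite super-separation $(G_C, G')$ of $G$, with $G[C \cup N(C)] \subseteq G_C$ and $G - C \subseteq G'$, that violates Lemma \ref{separation}.

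For the part $a \in X_1$, I suppose for contradiction that $a \in X_0$. Since $N(C) \subseteq N(v) \cup U \cup \{a\}$, with $N(v) \subseteq X_1$ and $U \cup \{a\} \subseteq X_0$, this forces $N(C) \cap X_1 = T$ (of size $3$) and $N(C) \cap X_0 \subseteq U \cup \{a\}$, of size $p := |N(C) \cap X_0|$, so $|N(C)| = p+3$ with $p \leq 4$. The edges of $G[N(C)]$ are exactly two per $u_{i,j} \in N(C) \cap U$ (to $v_i, v_j \in T$) plus $|N(a) \cap T|$ when $a \in N(C)$. Applying the trivial super-separation $(G-C, G[C \cup N(C)])$ of order $|N(C)|$ (whose hypotheses follow from Lemma \ref{aboutC}(i)), Lemma \ref{separation} yields $|E(G[N(C)])| \leq 3p - 2$. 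This immediately eliminates $p = 0$ and the case $p = 1$ with $a \notin N(C)$. For the remaining cases I would augment the super-separation by adding to $G_C$ connected subgraphs of $V(G) - (C \cup N(C))$ whose contractions land on a specific bipartition class, in particular by bringing in $v$ and any $u_{k,l} \notin N(C)$ together with their edges to $T$. Combined with Lemmas \ref{inCommon}, \ref{noK33}, and \ref{no44}, applied to pairs in $T$, which must share at least three common neighbors besides $v$, this should produce either a forbidden subgraph or a contradiction to the minimality of $|N(C)|$.

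For the second part, given $a \in X_1$, I suppose $G-(V(H')\cup C)$ is disconnected, so $G-V(H')$ has at least three components $C, D_1, D_2, \ldots$. By Lemma \ref{minor}, at most one of these has $N(v) \subseteq N(\cdot)$, and by $|N(C) \cap N(v)| = 3$ it is not $C$. By the minimality in the choice of $C$, every other component $D_i$ with $1 \leq |N(D_i) \cap N(v)| \leq 3$ satisfies $|N(D_i)| \geq |N(C)|$. I would construct a super-separation keeping $C$ on one side and the remaining components on the other, contracting each $D_i$ through a carefully chosen connected subgraph to stay bipartite; the presence of multiple components should inflate the edge-count difference beyond the bound of Lemma \ref{separation}.

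The main obstacle in both parts is preserving bipartiteness of the super-separations: contracting $C$ (or some $D_i$) straight into a single vertex produces a minor whose new vertex is adjacent to both bipartition classes, because $N(C)$ straddles both. Following the pattern of Lemma \ref{connectivity}, I would only add bipartite edges and select contractions whose resulting vertex lies in a single class, tracking explicitly the $U$-edges and the adjacencies of $a$, to finally drive the inequality of Lemma \ref{separation} past its breaking point.
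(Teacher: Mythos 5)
Your plan correctly identifies the framework the paper uses — construct a bipartite super-separation around $C$, preserve bipartiteness by choosing contractions whose new vertex lands in a single class, and contradict Lemma~\ref{separation} — but it stops at the plan and never produces the constructions that actually make the argument close. Phrases like ``I would augment the super-separation\ldots'', ``should produce either a forbidden subgraph\ldots'', and ``should inflate the edge-count difference\ldots'' are exactly where the difficulty lives, and none of them is discharged. The paper's proof hinges on a concrete intermediate claim, $\Delta(\overline{G[N(C)]}) \leq 1$, which it gets by building a very specific $G_C$ from $G[C\cup N(C)]$: adding edges from $a$ to $N_{\overline{G[N(C)]}}(a)$, and (when $v_4 \in N(C)$) from $v_4$ to $N(x)\cap U \cap N(C)$, then pairing it with $G'$ built from $G-C$ by re-adding the $\overline{G[N(C)]}$-edges at a maximum-degree vertex $z$. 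Verifying that $G_C$ is actually a minor requires, in the $a \in X_1$ subcase, that some component $C'$ of $G-(V(H')\cup C)$ satisfies $U\subseteq N(C')$ and $|N(C')\cap N(v)|=3$ (Claim~\ref{componentExists}); your sketch never confronts this, and without it there is no way to land the new edges on $a$ while staying in the minor of $G$.

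There is also a small concrete error. You assert that when $a \in X_0$, each $u_{i,j} \in N(C)\cap U$ contributes exactly two edges to $G[N(C)]$. That holds only when both $v_i$ and $v_j$ lie in $N(C)$; since $|N(C)\cap N(v)| = 3$ but $v_4 \in N(C)$ is possible (so one of $v_1, v_2, v_3$ could be missing), a $u_{i,j}$ with a missing endpoint contributes only one edge. Consequently ``$|E(G[N(C)])|\le 3p-2$ immediately eliminates $p=1$ with $a\notin N(C)$'' fails in some subcases (e.g.\ $N(C)\cap X_0=\{u_{1,2}\}$ and $v_1\notin N(C)$ gives only one edge). Those small-order cases are in fact killed by Lemma~\ref{connectivity} directly, not by the edge count. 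The more serious issue remains the missing constructions: without the explicit $G_C$, the claim $\Delta(\overline{G[N(C)]})\le 1$, and the handling of the $a\in X_0$ and $a\in X_1$ cases that the paper derives from it, the proposal does not establish the lemma.
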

\begin{proof}
Suppose otherwise. That is, suppose that $a \in X_0$ or $G-(V(H')\cup C)$ is not connected. Then:

\begin{claim}
\label{componentExists}
If $a \in X_1$, then there exists a component of $G-(V(H')\cup C)$ with vertex set $C'$ so that $U \subseteq N(C')$ and $|N(C') \cap N(v)|=3$.
\end{claim}
\begin{proof}
Suppose $a \in X_1$. Then $G-(V(H')\cup C)$ is not connected, and so by Lemma \ref{minor}, it has a component with vertex set $C'$ such that $N(v) \nsubseteq N(C')$. Since $C$ is the vertex set of a component of $G-V(H')$, it follows that $C'$ is also the vertex set of a component of $G-V(H')$. So $N(C')\cap X_0\subseteq U$ and thus $|N(C')\cap X_0|\leq 3$. So by Lemma \ref{connectivity} applied to the separation $(C'\cup N(C'), V(G)-C')$, we have that $|N(C')\cap X_1|\geq 2$. So $C'$ is the vertex set of a component of $G-V(H')$ such that $1\leq |N(C')\cap N(v)|\leq 3$. Then by the choice of $C$, we have $|N(C')| \geq |N(C)|$. By part (iii) of 
Lemma~\ref{aboutC}, 
since $a \in X_1$, we have $|N(C)|= 7$. Then $|N(C')|\geq 7$, and so $U \subseteq N(C')$ and $|N(C') \cap N(v)|=3$.
\end{proof}

Recall that by part (iii) of Lemma \ref{aboutC}, we have $|N(C)\cap N(v)|=3$. If $v_4 \notin N(C)$, then let $G_C$ denote the graph obtained from $G[C \cup N(C)]$ by adding edges between $a$ and every vertex in $N_{\overline{G[N(C)]}}(a)$. If $v_4 \in N(C)$, then let $G_C$ denote the graph obtained from $G[C \cup N(C)]$ by adding edges between $a$ and every vertex in $N_{\overline{G[N(C)]}}(a)$, and by adding edges between $v_4$ and every vertex in $N(x) \cap U \cap N(C)$.

Observe that in both cases, $G_C$ is bipartite and has fewer vertices than $G$. Furthermore, by part (i) of Lemma \ref{aboutC}, the graph $G_C$ is not isomorphic to $K_{3,t}$ for any $t$, and has at least five vertices. We now show two claims about the graph $G_C$.

\begin{claim}
\label{GCminor}
The graph $G_C$ is a minor of $G$.
\end{claim}
\begin{proof}
Recall that $G-V(H)$ is connected by Lemma \ref{connectedCutVertex}. So every component of $G-(V(H')\cup C)$ has $a$ as a neighbor.

First suppose that $a \in X_0$. By part (ii) of Lemma \ref{aboutC}, every vertex in $N(v)$ has a neighbor in $V(G)-(V(H')\cup C)$. Then we can see that $G_C$ is a minor of $G$ by contracting every component of $G-(V(H')\cup C)$ to $a$, and if $v_4 \in N(C)$, by contracting $x$ and $v$ to $v_4$. 

So we may assume that $a \in X_1$. Then by Claim \ref{componentExists}, there exists a component of $G-(V(H')\cup C)$ with vertex set $C'$ so that $U \subseteq N(C')$ and $|N(C') \cap N(v)|=3$. Then we can see that $G_C$ is a minor of $G$ by contracting $C'$ to $a$, and if $v_4 \in N(C)$, by contracting $x$ and $v$ to $v_4$. 
\end{proof}

\begin{claim}
$|E(G_C)|+|E(G-C)|-|E(G)|= 3+2|N(C)\cap U|$
\end{claim}
\begin{proof}
First suppose that $|N(C) \cap U|=3$. Observe that then if $v_4 \in N(C)$ we have $|N(x) \cap U\cap N(C)|=|N(x) \cap U|=2$. Let $\mathbbm{1}_{v_4 \in N(C)}$ be $1$ if $v_4 \in N(C)$ and $0$ otherwise. Then:
\begin{align*}
|E(G_C)|&+|E(G-C)|-|E(G)| \\
&= d_{\overline{G[N(C)]}}(a)+2\mathbbm{1}_{v_4 \in N(C)}+|E(G[C \cup N(C)])|+|E(G-C)|-|E(G)|\\
&=d_{\overline{G[N(C)]}}(a) +2\mathbbm{1}_{v_4 \in N(C)}+|E(G[N(C)])|\\
&= 2\mathbbm{1}_{v_4 \in N(C)}+|T|+|E(G[N(C)-\{a\}])|.
\end{align*}
In either case, since $|T|=3$ by part (iii) of Lemma \ref{aboutC}, $$2\mathbbm{1}_{v_4 \in N(C)}+|T|+|E(G[N(C)-\{a\}])|=9=3+2|N(C) \cap U|,$$
which completes the case that $|N(C) \cap U|=3$. 

So we may assume that $|N(C) \cap U|\leq 2$. Since $|T| \geq 3$, it follows that $a \in X_0$. Then by Lemma \ref{connectivity} applied to the separation $(C \cup N(C), V(G)-C)$, we have $|N(C) \cap U|=2$ and $\Delta(G[N(C)])\leq 1$. By symmetry between pairs of vertices in $U$, we may assume that $N(C) \cap U = \{u_{1,2},u_{1,3}\}$. Then $v_1 \notin N(C)$. Then $|N(x) \cap U \cap N(C)|=|N(v_1)\cap U \cap N(C)|=2$ and similarly to the last case we find that
\begin{align*}
|E(G_C)|+|E(G-C)|-|E(G)|&= |T|+2+|E(G[N(C)-\{a\}])|\\&=|T|+4 =3+2|N(C) \cap U|,
\end{align*}
which completes the proof of the claim.
\end{proof}

We now show that $G[N(C)]$ is almost a complete bipartite graph.

\begin{claim}
\label{claim:maxDegree}
$\Delta(\overline{G[N(C)]}) \leq 1$
\end{claim}
\begin{proof}
Suppose that $\Delta(\overline{G[N(C)]}) \geq 2$. Let $z \in N(C)$ be a vertex with maximum degree in $\overline{G[N(C)]}$. Then let $G'$ be the graph obtained from $G-C$ by adding an edge between $z$ and every vertex in $N_{\overline{G[N(C)]}}(z)$. We can see that $G'$ is a minor of $G$ on strictly fewer vertices by contracting $C$ to $z$. Furthermore, $G'$ is bipartite, and by part (i) of Lemma \ref{aboutC}, has at least five vertices and is not isomorphic to $K_{3,t}$ for any $t$. By Claim \ref{GCminor}, the graph $G_C$ is a minor of $G$. So since $(C \cup N(C), V(G)-C)$ is a separation of $G$, it follows that $(G_C, G')$ is a super-separation of $G$. 

In fact we have shown that $(G_C, G')$ is a non-trivial bipartite super-separation of $G$ so that both $G_C$ and $G'$ have at least five vertices and are not isomorphic to $K_{3,t}$ for any $t$. Since $|N(C) \cap N(v)|=3$ and $a \in N(C)$, the order of the super-separation $(G_C, G')$ is $4+|N(C) \cap U|$. Then by Lemma \ref{separation} and the last claim,
\begin{align*}
3(4+|N(C) \cap U|)&\geq |E(G_C)|+|E(G')|-|E(G)|+11\\
&\geq |E(G_C)|+|E(G-C)|-|E(G)|+13 \\
&= 2|N(C) \cap U|+16.
\end{align*}
But then $|N(C) \cap U|\geq 4$, which is a contradiction since $|U|=3$.
\end{proof}

We are now ready to complete the proof of the lemma. We proceed by cases.

\begin{flushleft}
\textit{Case}: $a \in X_0$
\end{flushleft}
By the last claim, $d_{\overline{G[N(C)]}}(a) \leq 1$. Then since $|N(C) \cap N(v)|=3$, $a$ is adjacent to at least two vertices in $N(v)$. So by the choice of $U$, $a$ is adjacent to exactly two vertices in $N(v)$. So there exists a vertex $y \in N(C)\cap N(v)$ that is not adjacent to $a$. By the last claim, $d_{\overline{G[N(C)]}}(y) \leq 1$. So since $y$ is not adjacent to $a$, $y$ is adjacent to every vertex in $N(C) \cap U$.

Then $U \nsubseteq N(C)$. Then by Lemma \ref{connectivity} applied to the separation $(C \cup N(C), V(G)-C)$, we have $|N(C) \cap U|=2$ and $\Delta(G[N(C)])\leq 1$. But then $1 \geq d_{G[N(C)]}(y)=3-d_{\overline{G[N(C)]}}(y) \geq 2$, a contradiction.

\begin{flushleft}
\textit{Case}: $a \in X_1$
\end{flushleft}
Then since $|T|=3$, we have $U \subseteq N(C)$. Suppose there exists $u \in U$ such that $ua \notin E(G)$. Then since $|N(C) \cap N(v)|=3$ and $u$ is adjacent to exactly two vertices in $N(v)$, it follows that $d_{\overline{G[N(C)]}}(u) \geq 2$, a contradiction to the last claim. So $U \subseteq N(a)$. Also by the last claim applied to $v_4$, we have $v_4 \notin N(C)$. So $N(C) = U \cup \{v_1, v_2, v_3, a\}$. 

By Claim \ref{componentExists}, there exists a component of $G-(V(H')\cup C)$ with vertex set $C'$ so that $U \subseteq N(C')$ and $|N(C')\cap N(v)|=3$. By symmetry between the vertices $v_1$, $v_2$, and $v_3$, we may assume that $v_1 \in N(C')$. Then let $G_C'$ be the graph formed from $G[C \cup N(C)]$ by adding an edge between $v_1$ and $u_{2,3}$. We can see that $G_C'$ is a minor of $G$ with strictly fewer vertices by contracting $C'$ to $v_1$. By part (i) of Lemma \ref{aboutC}, the graph $G_C'$ has at least five vertices and is not isomorphic to $K_{3,t}$ for any $t$.

Let $G'$ be the graph formed from $G-C$ by adding an edge between $v_1$ and $u_{2,3}$. We can see that $G'$ is a minor of $G$ on strictly fewer vertices by contacting $C$ to $v_1$. By part (i) of Lemma \ref{aboutC}, the graph $G'$ has at least five vertices and is not isomorphic to $K_{3,t}$ for any $t$.

Then $(G_C', G')$ is a non-trivial bipartite super-separation of $G$ of order $7$ such that neither $G_C'$ nor $G'$ is isomorphic to $K_{3,t}$ for any $t$. So by Lemma \ref{separation} and since $U \subseteq N(a)$ and $N(C) = U \cup \{v_1, v_2, v_3, a\}$,\begin{align*}
3(7)&\geq |E(G_C')|+|E(G')|-|E(G)| +11\\
&= |E(G[C \cup N(C)])|+|E(G-C)|-|E(G)|+13\\
&=|E(G[N(C)])|+13\\
&=22,
\end{align*}a contradiction. This completes the proof of the lemma.
\end{proof}

We have shown that $a \in X_1$ and that $G-(V(H')\cup C)$ is connected. For convenience write $D \coloneqq V(G)-(V(H')\cup C)$. By part (ii) of Lemma \ref{aboutC}, we have $N(v) \subseteq N(D)$. Also since $|T|=3$, we have $U \subseteq N(C)$. The final two lemmas show that certain vertices are not neighbors of $C$ or not neighbors of $D$.

\begin{lemma}
\label{v4}
$v_4 \notin N(C)$
\end{lemma}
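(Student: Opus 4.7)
The plan is to assume, for contradiction, that $v_4 \in N(C)$ and to exhibit a $K_6$ minor in $G$, contradicting the hypothesis of Theorem~\ref{thm:main2}. I take the six branch sets
\[
B_1 = \{v_1, u_{1,2}\},\ B_2 = \{v_2, u_{2,3}\},\ B_3 = \{v_3, u_{1,3}\},\ B_4 = \{v, v_4\},\ B_5 = C,\ B_6 = \{a\} \cup D.
\]

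The first step is to check that these branches are pairwise disjoint and that each induces a connected subgraph of $G$. Disjointness is immediate because $V(H')$, $C$, and $D$ partition $V(G)$, while $B_1,\ldots,B_4 \subseteq V(H')$ and $a \in V(H')\setminus V(H)$. Each of $B_1, B_2, B_3, B_4$ is a single edge of $G$ by the definitions of $U$ and $N(v)$. The set $B_5=C$ is connected since $C$ is a component of $G-V(H')$. Finally, $G[B_6]$ is connected because $G[D]$ is connected by Lemma~\ref{C'}, and $a$ has a neighbor in $D$: the graph $G-V(H)$ is connected by Lemma~\ref{connectedCutVertex}, so the cut vertex $a$, which separates $C$ from $D$ in that graph, has neighbors on both sides.

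The second step is to verify an edge between every pair of branches. The three edges of the triangle on $B_1,B_2,B_3$ are $u_{1,2}v_2$, $u_{2,3}v_3$, and $u_{1,3}v_1$, all of which exist by the definition of $U$. The edges from $B_1, B_2, B_3$ to $B_4$ are $vv_1$, $vv_2$, $vv_3$. The edges from $B_1, B_2, B_3$ to $B_5$ come from $U\subseteq N(C)$, which is recorded just above. The edge $B_4B_5$ is supplied exactly by our assumption $v_4 \in N(C)$. The edges from $B_1, B_2, B_3, B_4$ to $B_6$ are given by $N(v)\subseteq N(D)$, which is part~(ii) of Lemma~\ref{aboutC}. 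Finally, $B_5B_6$ uses $a \in N(C)$. This yields a $K_6$ minor in $G$, the desired contradiction.

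The only subtle point is the matching of $U$ with $\{v_1,v_2,v_3\}$: each $u_{i,j}$ must be paired with either $v_i$ or $v_j$, and every $u \in U$ must be used exactly once. With the assignment above, within each of $B_1, B_2, B_3$ the $U$-vertex supplies the edge to exactly one of the other two branches, while the third triangle edge is supplied by the $N(v)$-vertex reaching the $U$-vertex of the remaining branch. Everything else follows directly from the accumulated lemmas, so once the branch sets are written down no genuinely hard step remains.
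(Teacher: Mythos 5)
Your proposal is correct and takes essentially the same approach as the paper: both exhibit a $K_6$ minor with four of the branch sets being $\{v_1,u_{1,2}\}$, $\{v_2,u_{2,3}\}$, $\{v_3,u_{1,3}\}$, $\{v,v_4\}$, the only cosmetic difference being that the paper groups $a$ with $C$ while you group $a$ with $D$.
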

\begin{proof}
Suppose $v_4 \in N(C)$. Remember that $U \cup \{a\} \subseteq N(C)$. Then let $G'$ be the graph formed from $G$ by contracting $D$ to a single vertex with neighborhood $N(v) \cup \{a\}$ and by contracting $C$ to a single vertex, call it $c$, with neighborhood $U \cup \{v_4, a\}$. Then let $G''$ be the graph formed from $G'$ by contracting $u_{1,2}$ to $v_1$, $u_{2,3}$ to $v_2$, $u_{1,3}$ to $v_3$, $v$ to $v_4$, and finally $c$ to $a$. Then~$G''$ is isomorphic to $K_6$, a contradiction.
\end{proof}

\begin{lemma}
\label{uNeighborD}
If $u \in U\cap N(D)$, then $ua \in E(G)$.
\end{lemma}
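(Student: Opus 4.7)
The plan is to assume, for contradiction, that $ua \notin E(G)$ for some $u \in U \cap N(D)$, and to derive a contradiction using Lemma~\ref{separation}. Without loss of generality I would take $u = u_{1,2}$ so that $N(u) \cap N(v) = \{v_1, v_2\}$.

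First I would construct a bipartite minor $G_C$ of $G$ on vertex set $C \cup N(C)$ as follows: because $u \in N(D)$ and $G[D]$ is connected by Lemma~\ref{C'}, the set $D \cup \{u\}$ induces a connected subgraph of $G$; contract it to a single vertex still labeled $u$, then delete $v$ and $v_4$ and delete any same-side edges that arose (edges from $u$ to $u_{1,3}$ or $u_{2,3}$ in case those vertices lie in $N(D)$). By part~(ii) of Lemma~\ref{aboutC} and the fact that $a$ is a cut vertex of the connected graph $G - V(H)$, the vertices $v_1, v_2, v_3, v_4, a$ all lie in $N(D)$, so the contraction makes $u$ adjacent (on the $X_1$ side) to each of them; the edges $uv_1, uv_2$ were already present in $G$, the edge $uv_4$ is discarded when $v_4$ is deleted, and precisely the two new edges $uv_3$ and $ua$ survive in $G_C$.

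Second I would let $G_D$ be the bipartite minor of $G$ on $V(G) - C$ obtained by contracting $C$ into $a$ (valid since $a \in N(C)$ and $G[C]$ is connected) and then deleting same-side edges; this adds exactly the edges $\{au' : u' \in U \setminus N(a)\}$, a total of $3 - |N(a) \cap U|$ new edges. Then $(G_C, G_D)$ is a non-trivial bipartite super-separation of $G$ with $V(G_C) \cap V(G_D) = N(C)$, hence of order $k = 7$; by part~(i) of Lemma~\ref{aboutC} both $G_C$ and $G_D$ have at least five vertices and at least four vertices on each side of the bipartition, so neither is isomorphic to $K_{3,t}$ for any $t$.

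Using the identity $|E(G[C \cup N(C)])| + |E(G[V(G) - C])| - |E(G)| = |E(G[N(C)])|$ together with the fact that $|E(G[N(C)])| = 6 + |N(a) \cap U|$ (the six edges $v_i u_{j,k}$ with $i \in \{j,k\}$ plus the edges from $a$ to $U \cap N(a)$), I would compute
\[
|E(G_C)| + |E(G_D)| - |E(G)| = (6 + |N(a) \cap U|) + 2 + (3 - |N(a) \cap U|) = 11.
\]
Lemma~\ref{separation} applied to $(G_C, G_D)$ would then demand $3 \cdot 7 = 21 \ge 11 + 11 = 22$, which is false; this contradiction forces $ua \in E(G)$. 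The main obstacle in carrying this out is the careful bookkeeping for $G_C$: one must verify that the contraction of $D \cup \{u\}$, combined with the deletion of $v_4$ and the bipartite cleanup, produces exactly the two new edges $uv_3$ and $ua$, which is essential for the edge-count identity above to come out to $11$.
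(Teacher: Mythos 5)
Your argument is correct and is essentially the paper's own proof: the paper defines $G_C$ as $G[C\cup N(C)]$ with the two edges from $u$ to $N_{\overline{G[N(C)]}}(u)$ added (realized as a minor by contracting $D$ to $u$), and $G'$ as $G-C$ with edges from $a$ to $N_{\overline{G[N(C)]}}(a)$ added (contracting $C$ to $a$), then applies Lemma~\ref{separation} to the order-$7$ super-separation $(G_C,G')$ to reach the same $21\geq 22$ contradiction. Your version simply makes the minor construction and the edge-count bookkeeping more explicit, but the decomposition, the invocation of Lemma~\ref{aboutC}(i) for the $K_{3,t}$ exclusion, and the final inequality are identical.
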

\begin{proof}
Suppose otherwise. Let $G_C$ be the graph formed from $G[C \cup N(C)]$ by adding edges between $u$ and all vertices in $N_{\overline{G[N(C)]}}(u)$. We can see that $G_C$ is a minor of $G$ by contracting $D$ to $u$, since $N(v) \cup \{a\} \subseteq N(D)$. Let $G'$ be the graph formed from $G-C$ by adding edges between $a$ and all vertices in $N_{\overline{G[N(C)]}}(a)$. We can see that $G'$ is a minor of $G$ by contracting $C$ to $a$. Then $(G_C, G')$ is a non-trivial bipartite super-separation of $G$ of order $|N(C)|=7$. By part (i) of Lemma \ref{aboutC}, both $G_C$ and $G'$ have at least four vertices on each side of the bipartition of $G$. So neither is isomorphic to $K_{3,t}$ for any $t$. So by Lemma \ref{separation} and since $u$ is adjacent to exactly two vertices in $N(v)$,
\begin{align*}
3(7)& \geq |E(G_C)|+|E(G')|-|E(G)| +11\\
&= d_{\overline{G[N(C)]}}(u)+d_{\overline{G[N(C)]}}(a)+|E(G[N(C)])| +11\\ 
&\geq |N(C)\cap U|+|E(G[N(C)-\{a\}])|+13.
\end{align*}
Since $v_4 \notin N(C)$ by Lemma \ref{v4}, we have $|E(G[N(C)-\{a\}])|=6$. This is a contradiction since $|N(C)\cap U)|=3$.
\end{proof}

Write $U' \coloneqq U \cap N(D)$. Let $G_C$ be the graph formed from $G[C \cup N(C)]$ by adding a vertex with neighborhood $\{v_1, v_2, v_3, a\}$. We can see that $G_C$ is a minor of $G$ on strictly fewer vertices by contracting $D$ to a single vertex and since $|D|\geq 2$ by the choice of $U$. Also, by part (i) of Lemma \ref{aboutC}, the graph $G_C$ has at least four vertices in each side of the bipartition of $G$.

Let $G'$ be the graph formed from $G[D \cup N(D) \cup \{v\}]$ by adding a vertex with neighborhood $\{v_1, v_2, v_3, a\}$. We can see that $G'$ is a minor of $G$ on strictly fewer vertices by contracting $C$ to a single vertex and since $|C|\geq 2$ by part (i) of Lemma \ref{aboutC}. Furthermore, $G'$ is not isomorphic to $K_{3,t}$ for any $t$ since $va \notin E(G')$.

Now we show that every edge of $H'$ is an edge of either $G_C$ or $G'$. Let $e$ be an edge of $H'$. If $e$ is incident to $v$, then $e$ is an edge of $G'$. If $e$ is incident to a vertex in $U$, then $e$ is an edge of the graph $G_C$. Furthermore, if $e$ is incident to a vertex in $U'$, then $e$ is also an edge of $G'$.

So $(G_C, G')$ is a non-trivial bipartite super-separation of $G$. Furthermore, neither $G_C$ nor $G'$ is isomorphic to $K_{3,t}$ for any $t$, and the order of the super-separation $(G_C, G')$ is $6+|U'|$. Remember also that by Lemma \ref{uNeighborD}, every vertex $u \in U'$ is adjacent to $a$. Then by Lemma \ref{separation},
\begin{align*}
3(6+|U'|) &\geq |E(G_C)|+|E(G')|-|E(G)|+11
=3|U'|+19,
\end{align*}
a contradiction. This completes the proof of Theorem~\ref{thm:main2}.

\xx{%
\section*{Acknowledgment}
We would like to thank two anonymous referees for carefully reading the manuscript and for providing
many helpful suggestions.}

\let\OLDthebibliography\thebibliography
\renewcommand\thebibliography[1]{
  \OLDthebibliography{#1}
  \setlength{\parskip}{0em}
  \setlength{\itemsep}{0pt plus 0.3ex}
}

\baselineskip 11pt
\vfill
\noindent
This material is based upon work supported by the National Science Foundation.
Any opinions, findings, and conclusions or
recommendations expressed in this material are those of the authors and do
not necessarily reflect the views of the National Science Foundation.
\end{document}